\documentclass[11pt,reqno]{amsart}
\usepackage{amsmath}
\usepackage{amsthm}
\usepackage{graphicx}
\usepackage{amsfonts}
\usepackage{amssymb}
\usepackage{color}
\usepackage{enumerate}
\usepackage[margin=1.1in]{geometry}
\parindent=.25in

\numberwithin{equation}{section}

\newtheorem{theorem}{Theorem}[section]
\newtheorem{lemma}[theorem]{Lemma}
\newtheorem{proposition}[theorem]{Proposition}
\newtheorem{corollary}[theorem]{Corollary}

\theoremstyle{definition}

\newtheorem{definition}[theorem]{Definition}
\newtheorem{remark}[theorem]{Remark}

\newtheorem*{assumption}{Standing assumptions}

\def\E{{\mathbb E}}

\def\R{{\mathbb R}}

\def\PP{{\mathbb P}}

\def\P{{\mathcal P}}

\def\X{{\mathcal X}}
\def\Y{{\mathcal Y}}

\def\W{{\mathcal W}}

\def\F{{\mathcal F}}

\title{A non-exponential extension of Sanov's theorem via convex duality}

\author{Daniel Lacker}
\address{Industrial Engineering \& Operations Research, Columbia University}
\email{daniel.lacker@columbia.edu}
\thanks{This material is based upon work supported by the National Science Foundation under Award No. DMS 1502980.}
\keywords{Sanov's theorem, Cramer's theorem, large deviations, convex risk measures}

\begin{document}

\begin{abstract}
This work is devoted to a vast extension of Sanov's theorem, in Laplace principle form, based on alternatives to the classical convex dual pair of relative entropy and cumulant generating functional. The abstract results give rise to a number of probabilistic limit theorems and asymptotics. For instance, widely applicable non-exponential large deviation upper bounds are derived for empirical distributions and averages of i.i.d.\ samples under minimal integrability assumptions, notably accommodating heavy-tailed distributions. Other interesting manifestations of the abstract results include new results on the rate of convergence of empirical measures in Wasserstein distance, uniform large deviation bounds, and variational problems involving optimal transport costs, as well as an application to error estimates for approximate solutions of stochastic optimization problems. The proofs build on the Dupuis-Ellis weak convergence approach to large deviations as well as the duality theory for convex risk measures.
\\ {\ } \\ \smallskip
\noindent \textbf{AMS subject classifications.} 60F10, 46N10
\end{abstract}

\maketitle

\section{Introduction}

An original goal of this paper was to extend the weak convergence methodology of Dupuis and Ellis \cite{dupuis-ellis} to the context of non-exponential (e.g., heavy-tailed) large deviations. While we claim only modest success in this regard, we do find some general-purpose large deviation upper bounds which can be seen as polynomial-rate analogs of the upper bounds in the classical theorems of Sanov and Cram\'er. At least as interesting, however, are the abstract principles behind these bounds, which have broad implications beyond the realm of large deviations.
Let us first describe these abstract principles before specializing them in various ways.

Let $E$ be a Polish space, and let $\P(E)$ denote the set of Borel probability measures on $E$ endowed with the topology of weak convergence. Let $B(E)$ (resp. $C_b(E)$) denote the set of measurable (resp. continuous) and bounded real-valued functions on $E$. For $n \ge 1$ and $\nu \in \P(E^n)$, define $\nu_{0,1} \in \P(E)$ and measurable maps $\nu_{k-1,k} : E^{k-1} \rightarrow \P(E)$ for $k=2,\ldots,n$ via the disintegration
\[
\nu(dx_1,\ldots,dx_n) = \nu_{0,1}(dx_1)\prod_{k=2}^n\nu_{k-1,k}(x_1,\ldots,x_{k-1})(dx_k).
\]
In other words, if $(X_1,\ldots,X_n)$ is an $E^n$-valued random variable with law 
$\nu$, then $\nu_{0,1}$ is the law of $X_1$, and $\nu_{k-1,k}(X_1,\ldots,X_{k-1})$ is the conditional law of $X_k$ given $(X_1,\ldots,X_{k-1})$. Of course, $\nu_{k-1,k}$ are uniquely defined up to $\nu$-almost sure equality.

The protagonist of the paper is a proper (i.e., not identically $\infty$) convex function $\alpha : \P(E) \rightarrow (-\infty,\infty]$ with compact sub-level sets; that is, $\{\nu \in \P(E) : \alpha(\nu) \le c\}$ is compact for every $c \in \R$. For $n \ge 1$ define
$\alpha_n : \P(E^n) \rightarrow (-\infty,\infty]$ by
\[
\alpha_n(\nu) = \int_{E^n}\sum_{k=1}^n\alpha(\nu_{k-1,k}(x_1,\ldots,x_{k-1}))\,\nu(dx_1,\ldots,dx_n),
\]
and note that $\alpha_1 \equiv \alpha$.
Define the convex conjugate $\rho_n : B(E^n) \rightarrow \R$ by
\begin{align}
\rho_n(f) = \sup_{\nu \in \P(E^n)}\left(\int_{E^n}f\,d\nu - \alpha_n(\nu)\right), \quad\quad \text{ and } \quad\quad \rho \equiv \rho_1. \label{intro:duality}
\end{align}
Our main interest is in evaluating $\rho_n$ at functions of the \emph{empirical measure} $L_n : E^n \rightarrow \P(E)$ defined by
\[
L_n(x_1,\ldots,x_n) = \frac{1}{n}\sum_{i=1}^n\delta_{x_i}.
\]
The main abstract result of the paper is the following extension of Sanov's theorem, proven in more generality in Section \ref{se:sanovproof} by adapting the weak convergence techniques of Dupuis-Ellis \cite{dupuis-ellis}.

\begin{theorem} \label{th:main-sanov}
For $F \in C_b(\P(E))$,
\[
\lim_{n\rightarrow\infty}\frac{1}{n}\rho_n(nF \circ L_n) = \sup_{\nu \in \P(E)}(F(\nu) - \alpha(\nu)).
\]
\end{theorem}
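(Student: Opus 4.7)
I would first unfold the definition to
\[
\frac{1}{n}\rho_n(nF\circ L_n) = \sup_{\nu \in \P(E^n)}\left\{\E_\nu[F(L_n)] - \tfrac{1}{n}\alpha_n(\nu)\right\},
\]
and then prove matching lower and upper bounds for this supremum, following the weak-convergence blueprint of Dupuis-Ellis.

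\textbf{Lower bound.} For any $\mu \in \P(E)$ with $\alpha(\mu)<\infty$, plug in the product law $\nu=\mu^{\otimes n}$, whose disintegration is $\nu_{k-1,k}\equiv \mu$, so that $\alpha_n(\mu^{\otimes n})=n\alpha(\mu)$. By Varadarajan's theorem the empirical measure $L_n$ converges weakly to $\mu$, $\mu^{\otimes n}$-almost surely, and bounded convergence then gives $\E_{\mu^{\otimes n}}[F(L_n)]\to F(\mu)$. Optimizing over $\mu$ yields $\liminf_n \frac{1}{n}\rho_n(nF\circ L_n) \geq \sup_\mu(F(\mu)-\alpha(\mu))$.

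\textbf{Upper bound.} Select $\varepsilon$-optimizers $\nu^n \in \P(E^n)$; since $F$ is bounded, I may assume $M:=\sup_n \frac{1}{n}\alpha_n(\nu^n) < \infty$. The key auxiliary object is the conditional empirical measure
\[
\bar L_n := \frac{1}{n}\sum_{k=1}^n \nu^n_{k-1,k}(X_1,\ldots,X_{k-1}) \in \P(E),
\]
viewed as a random element under $\nu^n$. Convexity of $\alpha$ and Fubini give the \emph{a priori} bound $\E_{\nu^n}[\alpha(\bar L_n)] \leq \tfrac{1}{n}\alpha_n(\nu^n) \leq M$. Combined with compactness of the sub-level sets of $\alpha$ and Markov's inequality, this forces tightness of the laws $Q_n := \mathrm{Law}_{\nu^n}(\bar L_n) \in \P(\P(E))$, so I may pass to a subsequence with $Q_n \to Q$ weakly. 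For any bounded Lipschitz $g$ on $E$,
\[
\langle L_n - \bar L_n, g\rangle = \frac{1}{n}\sum_{k=1}^n\bigl(g(X_k) - \E_{\nu^n}[g(X_k)\mid X_1,\ldots,X_{k-1}]\bigr)
\]
is a martingale average with bounded increments, hence of $L^2$-norm $O(\|g\|_\infty/\sqrt{n})$. Taking a joint tight subsequential limit of $(L_n,\bar L_n)$ and testing against a countable convergence-determining family of such $g$'s identifies the two marginals, so $\mathrm{Law}_{\nu^n}(L_n) \to Q$ as well. Bounded continuity of $F$ and lower semicontinuity of $\alpha$ (via Portmanteau applied to the pushforward $Q_n$) then produce
\[
\limsup_n\Bigl(\E_{\nu^n}[F(L_n)] - \tfrac{1}{n}\alpha_n(\nu^n)\Bigr) \leq \int F\,dQ - \int \alpha\,dQ \leq \sup_\mu\bigl(F(\mu)-\alpha(\mu)\bigr).
\]

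\textbf{Main obstacle.} The delicate step is the coupling that identifies the weak limits of $L_n$ and $\bar L_n$: nothing is i.i.d.\ under $\nu^n$, so one cannot appeal to a law of large numbers in the usual sense and must instead extract the cancellation from the martingale structure of the disintegration. A secondary technical issue is the Portmanteau step when $\alpha$ is not bounded below, which should be handled by truncating $\alpha$ at a sub-level $\{\alpha \leq c\}$, passing to the limit, and then sending $c \to -\infty$ using the uniform $\E_{\nu^n}[\alpha(\bar L_n)] \leq M$ estimate.
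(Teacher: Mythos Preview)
Your plan coincides with the paper's proof (carried out for the more general Theorem~\ref{th:main-sanov-extended} and specializing to Theorem~\ref{th:main-sanov} by taking $\psi\equiv 0$): product measures plus the law of large numbers for the lower bound; for the upper bound, near-optimizers $\nu^n$, the random conditional empirical measure $\bar L_n$ (the paper's $S_n$), the convexity bound $\E_{\nu^n}[\alpha(\bar L_n)]\le \tfrac1n\alpha_n(\nu^n)$, and the martingale-difference estimate to identify the limits of $L_n$ and $\bar L_n$.

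There is, however, a genuine gap in your upper bound: you invoke ``a joint tight subsequential limit of $(L_n,\bar L_n)$'' having only established tightness of $\bar L_n$. The martingale estimate $\E_{\nu^n}|\langle L_n-\bar L_n,g\rangle|^2 \le 2\|g\|_\infty^2/n$ controls each test function separately but does not by itself yield tightness of $L_n$ as a $\P(E)$-valued random variable; if you try to build a compact set in $\P(E)$ of the form $\bigcap_k\{\nu:\nu(C_k^c)\le 1/k\}$ and bound $P(L_n(C_k^c)>1/k)$ via Chebyshev plus this estimate, the resulting series in $k$ diverges. The paper fills this gap in its Step~2 via the observation that $L_n$ and $\bar L_n$ share the same \emph{mean measure}, $\E_{\nu^n}[L_n(A)]=\E_{\nu^n}[\bar L_n(A)]$ for every Borel $A$, combined with an infinite-dimensional Jensen inequality (Lemma~\ref{le:jensen}) giving $\alpha(\E_{\nu^n}[\bar L_n])\le \E_{\nu^n}[\alpha(\bar L_n)]\le M$. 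The mean measures $\E_{\nu^n}[L_n]$ therefore lie in a fixed compact sub-level set of $\alpha$ and are in particular tight as elements of $\P(E)$; Markov's inequality applied to $L_n(C^c)$ then yields tightness of $\{L_n\}$. Your secondary concern about $\alpha$ being unbounded below is unnecessary: compact sub-level sets and lower semicontinuity force $\alpha$ to attain its infimum, so the Portmanteau/Fatou step $\liminf_n\int\alpha\,dQ_n\ge\int\alpha\,dQ$ is routine.
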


The guiding example is the relative entropy, $\alpha(\cdot) = H(\cdot | \mu)$, where $\mu \in \P(E)$ is a fixed reference measure, and $H$ is defined by
\begin{align}
H(\nu | \mu) = \int_E\log(d\nu/d\mu)\,d\nu, \text{ for } \nu \ll \mu, \quad\quad H(\nu | \mu) = \infty \text{ otherwise}, \label{def:relativeentropy}
\end{align}
Letting $\mu^n$ denote the $n$-fold product measure, it turns out that $\alpha_n(\cdot) = H(\cdot | \mu^n)$, by the  so-called \emph{chain rule} of relative entropy \cite[Theorem B.2.1]{dupuis-ellis}.
The dual $\rho_n$ is well known to be $\rho_n(f) = \log\int_{E^n}e^f\,d\mu^n$, and the duality formulas relating $\rho_n$ and $\alpha_n$ are often known as the Gibbs variational principle or the Donsker-Varadhan formula \cite[Proposition 1.4.2 and Lemma 1.4.3]{dupuis-ellis}. In this case Theorem \ref{th:main-sanov} reduces to the Laplace principle form of Sanov's theorem:
\[
\lim_{n\rightarrow\infty}\frac{1}{n}\log\int_{E^n}e^{nF\circ L_n}\,d\mu^n = \sup_{\nu \in \P(E)}(F(\nu) - H(\nu | \mu)).
\]
Well known theorems of Varadhan and Dupuis-Ellis (see \cite[Theorem 1.2.1 and 1.2.3]{dupuis-ellis}) assert the equivalence of this form of Sanov's theorem with the more common form: for every Borel set $A \subset \P(E)$ with closure $\overline{A}$ and interior $A^\circ$,
\begin{align}
-\inf_{\nu \in A^\circ}H(\nu | \mu) &\le \liminf_{n\rightarrow\infty}\frac{1}{n}\log\mu^n(L_n \in A) \nonumber \\
	&\le \limsup_{n\rightarrow\infty}\frac{1}{n}\log\mu^n(L_n \in A) \le -\inf_{\nu \in \overline{A}}H(\nu | \mu). \label{def:classicalsanov}
\end{align}
To derive this heuristically, apply Theorem \ref{th:main-sanov} to the function
\begin{align}
F(\nu) = \begin{cases}
0 &\text{if } \nu \in A \\
-\infty &\text{otherwise}.
\end{cases} \label{def:convexindicator}
\end{align}
For general $\alpha$, Theorem \ref{th:main-sanov} does not permit an analogous \emph{equivalent} formulation in terms of deviation probabilities. In fact, for many $\alpha$, Theorem \ref{th:main-sanov} has nothing to do with large deviations (see Sections \ref{se:intro:lln} and \ref{se:intro:optimaltransport} below). Nonetheless, for certain $\alpha$, Theorem 1.1 \emph{implies} interesting large deviations upper bounds, which we prove by formalizing the aforementioned heuristic.
While many $\alpha$ admit fairly explicit known formulas for the dual $\rho$, the recurring challenge in applying Theorem \ref{th:main-sanov} is finding a useful expression for $\rho_n$, and herein lies but one of many instances of the wonderful tractability of relative entropy.
The examples to follow do admit good expressions for $\rho_n$, or at least workable one-sided bounds, but we also catalog in Section \ref{se:intro:alternatives} some natural alternative choices of $\alpha$ for which we did not find useful bounds or expressions for $\rho_n$.

The functional $\rho$ is (up to a sign change) a \emph{convex risk measure}, in the language of F\"ollmer and Schied \cite{follmer-schied-book}.
A rich duality theory for convex risk measures emerged over the past two decades, primarily geared toward applications in financial mathematics and optimization. We take advantage of this theory in Section \ref{se:riskmeasures} to demonstrate how $\alpha$ can be reconstructed from $\rho$, which shows that $\rho$ could be taken as the starting point instead of $\alpha$. Additionally, the theory of risk measures provides insight on how to deal with the subtleties that arise in extending the domain of $\rho$ (and Theorem \ref{th:main-sanov}) to accommodate unbounded functions or stronger topologies on $\P(E)$. Section \ref{se:intro:riskmeasures} briefly reinterprets Theorem \ref{th:main-sanov} in a language more consistent with the risk measure literature. The reader familiar with risk measures may notice a \emph{time consistent dynamic risk measure} (see \cite{acciaio-penner-dynamic} for definitions and survey) hidden in the definition of $\rho_n$ above.

We will make no use of the interpretation in terms of dynamic risk measures, but it did inspire a recursive formula for $\rho_n$ (similar to a result of \cite{cheridito2011composition}). To state it loosely, if $f \in B(E^n)$ then we may write 
\begin{align}
\rho_n(f) = \rho_{n-1}(g), \quad \text{where} \quad g(x_1,\ldots,x_{n-1}) := \rho\left(f(x_1,\ldots,x_{n-1},\cdot)\right). \label{def:intro:rhon-recursive}
\end{align}
To make rigorous sense of this, we must note that $g : E^{n-1} \rightarrow \R$ is merely upper semianalytic and not Borel measurable in general and argue that  $\rho$ is well defined for such functions. We make this precise in Proposition \ref{pr:rhon-iterative}. This recursive formula is not essential for any of them main arguments but is convenient for some calculations.

\subsection{Nonexponential large deviations} \label{se:intro:nonexpLDP}

Our first application, and the one we discuss in the most detail, comes from applying (an extension of) Theorem \ref{th:main-sanov} with
\begin{align}
\alpha(\nu) = \|d\nu/d\mu\|_{L^p(\mu)}-1, \text{ for } \nu \ll \mu, \quad\quad \alpha(\nu) = \infty \text{ otherwise}, \label{intro:lpentropy}
\end{align}
where $\mu \in \P(E)$ is fixed. We state the abstract result first. For a continuous function $\psi : E \rightarrow \R_+ := [0,\infty)$, let $\P_\psi(E)$ denote the set of $\nu \in \P(E)$ satisfying $\int\psi\,d\nu < \infty$. Equip $\P_\psi(E)$ with the topology induced by the linear maps $\nu \mapsto \int f\,d\nu$, where $f : E \rightarrow \R$ is continuous and $|f| \le 1+\psi$.
Recall in the following that $\mu^n$ denotes the $n$-fold product measure.

\begin{theorem} \label{th:lpsanov}
Let $q \in (1,\infty)$, and let $p=q/(q-1)$ denote the conjugate exponent. Let $\mu \in \P(E)$, and suppose $\int\psi^q\,d\mu < \infty$ for some continuous $\psi : E \rightarrow \R_+$. Then, for every closed set $A \subset \P_\psi(E)$,
\[
\limsup_{n\rightarrow\infty}\,n^{q-1}\mu^n(L_n \in A)  \le \left(\inf_{\nu \in A}\|d\nu/d\mu\|_{L^p(\mu)}-1\right)^{-q}.
\]
\end{theorem}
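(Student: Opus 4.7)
The plan is to apply an extension of Theorem \ref{th:main-sanov} to the specific $\alpha(\nu) := \|d\nu/d\mu\|_{L^p(\mu)} - 1$ with $F$ taken as the upper semicontinuous ``convex indicator'' of $A$ (i.e., $F \equiv 0$ on $A$ and $F \equiv -\infty$ off $A$), then combine the resulting Laplace-type upper bound with a direct lower bound on $\rho_n(nF \circ L_n)$ expressed in terms of $\mu^n(L_n \in A)$. Write $\alpha^* := \inf_{\nu \in A}\alpha(\nu)$; the asserted inequality is vacuous when $\alpha^* \in \{0,\infty\}$, so assume $\alpha^* \in (0,\infty)$. The first technical task is to extend Theorem \ref{th:main-sanov} to the $\P_\psi(E)$ topology and to upper semicontinuous $F$ bounded from above. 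The hypothesis $\int \psi^q\,d\mu < \infty$ combined with H\"older's inequality yields $\int \psi\,d\nu \le (1+\alpha(\nu))\|\psi\|_{L^q(\mu)}$, so the sublevel sets of $\alpha$ are uniformly $\psi$-integrable and hence compact in $\P_\psi(E)$; approximating the convex indicator of the closed set $A$ by a decreasing sequence of bounded continuous functions on $\P_\psi(E)$ and passing to the limit produces $\limsup_n \frac{1}{n}\rho_n(nF\circ L_n) \le -\alpha^*$.

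For the matching lower bound, set $B_n := \{L_n \in A\} \subset E^n$ and plug $\tilde\nu_n := 1_{B_n}\mu^n / \mu^n(B_n)$ (supported on $B_n$) into the variational formula for $\rho_n$; since $F\circ L_n$ vanishes on $B_n$, this gives $\rho_n(nF\circ L_n) \ge -\alpha_n(\tilde\nu_n)$, so that $\alpha_n(\tilde\nu_n) \ge n\alpha^*(1+o(1))$. To relate $\alpha_n(\tilde\nu_n)$ to $\mu^n(B_n)$, introduce the Doob martingale $\phi_k(x_{1:k}) := E_\mu[1_{B_n}\mid x_{1:k}]$ of $1_{B_n}$ under $\mu^n$. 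A direct Fubini computation using the conditional densities $d\tilde\nu_{n,k-1,k}/d\mu = \phi_k/\phi_{k-1}$ and the martingale identity $\phi_{k-1}(x_{1:k-1}) = \|\phi_k(x_{1:k-1},\cdot)\|_{L^1(\mu)}$ yields the explicit formula
\[
\alpha_n(\tilde\nu_n) \;=\; \frac{1}{\mu^n(B_n)}\sum_{k=1}^n E_{\mu^{k-1}}\!\bigl[\,\|\phi_k(x_{1:k-1},\cdot)\|_{L^p(\mu)} - \|\phi_k(x_{1:k-1},\cdot)\|_{L^1(\mu)}\,\bigr],
\]
each summand being a nonnegative Jensen gap measuring the ``spread'' of the martingale increment at step $k$.

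The crux is then the matching upper bound $\alpha_n(\tilde\nu_n) \le n^{1/q}\mu^n(B_n)^{-1/q}(1+o(1))$, which combined with $\alpha_n(\tilde\nu_n) \ge n\alpha^*(1+o(1))$ rearranges exactly to $n^{1/p}\mu^n(B_n)^{1/q} \le 1/\alpha^* + o(1)$, proving the claim. I expect this estimate to be the principal obstacle: the naive pointwise bound $\|\phi_k(x_{1:k-1},\cdot)\|_{L^p(\mu)} \le \mu^n(B_n)^{1/p}$ (deduced from $\|\phi_k\|_{L^p(\mu^k)}^p \le E_\mu[\phi_n^p] = \mu^n(B_n)$ since $\phi_n = 1_{B_n}$) yields only $\alpha_n(\tilde\nu_n) \le n\mu^n(B_n)^{-1/q}$, falling short by the multiplicative factor $n^{1/p}$. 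Obtaining the sharper factor $n^{1/q}$ requires exploiting the martingale structure of $(\phi_k)$ much more carefully --- for instance via a Doob $L^p$-type or Burkholder-type inequality bounding the sum of Jensen gaps in terms of the total quadratic variation $E[\phi_n^2] - E[\phi_0^2] \le \mu^n(B_n)$ --- and it is precisely here that the specific $L^p$-based form of $\alpha$, as opposed to its role as an abstract convex penalty, enters essentially.
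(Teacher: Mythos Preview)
Your overall architecture is exactly the paper's: choose $\alpha(\nu)=\|d\nu/d\mu\|_{L^p(\mu)}-1$, apply the extended Sanov upper bound (Theorem \ref{th:main-sanov-extended}) to the convex indicator of $A$, and bound $\rho_n(nF\circ L_n)$ from below by $-\alpha_n(\tilde\nu_n)$ with $\tilde\nu_n=1_{B_n}\mu^n/\mu^n(B_n)$. The missing piece you flag is indeed the crux, but your proposal does not close it, and the tools you suggest are not the right ones. The paper proves the general inequality $\alpha_n(\nu)\le n^{1/q}\|d\nu/d\mu^n\|_{L^p(\mu^n)}$ (its Lemma \ref{le:shortfallbound}) for \emph{any} $\nu\ll\mu^n$, which for $\nu=\tilde\nu_n$ gives exactly $\alpha_n(\tilde\nu_n)\le n^{1/q}\mu^n(B_n)^{-1/q}$ with no $o(1)$ term. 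The argument is elementary and does not use Doob's maximal inequality or Burkholder; moreover the relevant telescoping quantity is $\E[M_n^p]-\E[M_0^p]$, not the quadratic variation $\E[M_n^2]-\E[M_0^2]$ you mention.

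Concretely, with $M_k=\E^{\mu^n}[d\nu/d\mu^n\mid\F_k]$ one has $\alpha_n(\nu)=\sum_k\E\bigl[\E[M_k^p\mid\F_{k-1}]^{1/p}-M_{k-1}\bigr]$. The key trick is the subadditivity of $t\mapsto t^{1/p}$: since $\E[M_k^p\mid\F_{k-1}]\ge M_{k-1}^p$, one writes $\E[M_k^p\mid\F_{k-1}]^{1/p}\le\bigl(\E[M_k^p-M_{k-1}^p\mid\F_{k-1}]\bigr)^{1/p}+M_{k-1}$, so that $\alpha_n(\nu)\le\E\sum_k\bigl(\E[M_k^p-M_{k-1}^p\mid\F_{k-1}]\bigr)^{1/p}$. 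Now Jensen for the concave map $t\mapsto t^{1/p}$, applied first across the sum in $k$ (picking up the factor $n^{1-1/p}=n^{1/q}$) and then across the outer expectation, collapses the telescoping sum to $\bigl(\E[M_n^p-M_0^p]\bigr)^{1/p}\le\|d\nu/d\mu^n\|_{L^p(\mu^n)}$. This is the ``careful exploitation of the martingale structure'' you anticipate, but it is a two-line concavity/subadditivity computation rather than a maximal or square-function inequality.
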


We view Theorem \ref{th:lpsanov} as a non-exponential version of the upper bound of Sanov's theorem, and the proof is given in Section \ref{se:shortfall}. At this level of generality, there cannot be a matching lower bound for open sets as in the classical case \eqref{def:classicalsanov}, as will be explained more in Section \ref{se:intro:Cramer}. Of course, Sanov's theorem applies without any moment assumptions, but the upper bound provides no information in many heavy-tailed contexts. We illustrate this with three applications below, all of which take advantage of the crucial fact that Theorem \ref{th:lpsanov} applies to arbitrary closed sets $A$, which enables a natural contraction principle (i.e., continuous mapping). The first example gives new results on the rate of convergence of empirical measures in Wasserstein distance. Second, we derive non-exponential upper bounds analogous to Cram\'er's theorem for sums of i.i.d.\ random variables with values in Banach spaces. Lastly, we derive error bounds for the usual Monte Carlo scheme in stochastic optimization, essentially providing a heavy-tailed analogue of the results of \cite{kaniovski-king-wets}.

\subsubsection{Rate of convergence of empirical measures in Wasserstein distance}
First, some terminology: A \emph{compatible metric} on $E$ is any metric on $E$ which generates the given Polish topology. For $\mu,\nu \in \P(E)$ and $q \ge 1$ define the $q$-Wasserstein distance $\W_q(\mu,\nu)$ by
\begin{align}
\W^q_q(\mu,\nu) = \inf_{\pi \in \Pi(\mu,\nu)}\int_{E \times E} d^q(x,y) \pi(dx,dy), \label{def:wasserstein}
\end{align}
where $\Pi(\mu,\nu)$ is the set of probability measures on $E \times E$ with first marginal $\mu$ and second marginal $\nu$. We will prove in Section \ref{se:pf:lpstuff} the following:

\begin{corollary}[Wasserstein convergence rate] \label{co:wasserstein-rate}
Let $d$ be any compatible metric on $E$. Let $q > r \ge 1$, and let $\mu \in \P(E)$ satisfy $\int_E d^q(x,x_0)\mu(dx) < \infty$ for some (equivalently, for any) $x_0 \in E$. Then, for each $a > 0$,
\begin{align*}
\limsup_{n\to\infty} n^{\frac{q}{r}-1}\mu^n\left(\W_r(L_n,\mu) \ge a\right) < \infty.
\end{align*}
In particular,
\begin{align*}
\limsup_{n\to\infty} n^{q-1}\mu^n\left(\W_1(L_n,\mu) \ge a\right) < \infty.
\end{align*}
\end{corollary}

In other words, $\mu^n\left(\W_r(L_n,\mu) \ge a\right) = O(n^{1-q/r})$. 
In the $r=1$ case, a comparison with a more classical setting reveals that this rate is the right one, in a sense: Suppose $X_i$ are i.i.d.\ real-valued random variables with law $\mu$, mean zero, and $\E|X_1|^q < \infty$. Then Kantorovich duality yields the a.s.\ inequality $\frac{1}{n}\sum_{i=1}^nX_i \le \W_1\left(\frac{1}{n}\sum_{i=1}^n\delta_{X_i},\mu\right)$, and Corollary \ref{co:wasserstein-rate} gives $\PP(X_1+\cdots + X_n \ge an) = O(n^{1-q})$ for each $a > 0$. It is known in this context that $\PP(X_1 + \cdots + X_n > an) = o(n^{1-q})$, and this exponent cannot be improved under the sole assumption of a finite $q^\text{th}$ moment \cite[Chapter IX, Theorems 27-28]{petrov2012sums}. 
A similar argument in the case $r > 1$ indicates that the exponent $q/r - 1$ is sharp in the first claim of Corollary \ref{co:wasserstein-rate}.

There is by now substantial literature on rates of convergence of empirical measures of i.i.d.\ sequences in Wasserstein distance, and we refer to the recent papers \cite{fournier2015rate,weed2017sharp} for an overview of the literature and applications in quantization, interacting particle systems, etc. Nonetheless, our result seems quite new in the two key aspects of applying in heavy-tailed settings and in general spaces. First, while the $n\to\infty$ convergence rate of the \emph{expected distance} $M_n^{(r)}:=\int_{E^n}\W_r^r(L_n,\mu)\,d\mu^n$ is by now well understood, the (asymptotic) rate of convergence in $n$ of the deviation probabilities given in Corollary \ref{co:wasserstein-rate} appears to be new. Case (3) in Theorem 2 of \cite{fournier2015rate} gives some non-asymptotic bounds on these probabilities which are worse than ours in the $n\to\infty$ regime; the closest counterpart among their results is a bound of $O(n^{1-q+\epsilon})$ for any $\epsilon > 0$, but it is given only for $r < q/2$ and in Euclidean spaces.

A second novelty of Corollary \ref{co:wasserstein-rate} is that it is valid in arbitrary Polish spaces, whereas most prior literature deals with Euclidean spaces. In the setting of tail probability bounds, notable exceptions include \cite{boissard2011simple} and \cite{weed2017sharp}, which show exponential decay in $n$ of the probabilities $\mu^n\left(\W_1(L_n,\mu) \ge a\right)$ but under assumptions that the measure $\mu$ has compact support or finite exponential moments. 
In the study of the expected distances $M_n^{(r)}$, it is well known that the dimension of the underlying space must absolutely come into play (where a suitable meaning of ``dimension" must be given in the metric space setting \cite{dudley1969speed,weed2017sharp}). For example, in Euclidean space $E \subset \R^d$ with $d > 2$, $M_n^{(1)}$ is known to be asymptotic to $n^{-1/d}$, at least when $E$ is compact and $\mu$ is absolutely continuous with respect to Lebesgue measure \cite{dudley1969speed}. Corollary \ref{co:wasserstein-rate} shows that this dimension-dependence disappears from the probabilistic rate of convergence. Take note that this leads to no contradiction: writing $M_n^{(1)} = \int_0^\infty\mu^n\left(\W_1(L_n,\mu) \ge a\right)da$ and applying Corollary \ref{co:wasserstein-rate} does not imply $n^{q-1}M_n^{(1)}\to 0$, as the dominated convergence theorem does not apply here.

\subsubsection{Cram\'er's upper bound} \label{se:intro:Cramer}

While Cram\'er's theorem in full generality, like Sanov's, does not require any finite moments, the upper bound is often vacuous when the underlying random variables have heavy tails. This simple observation has driven a large and growing literature on large deviation asymptotics for sums of i.i.d.\ random variables, to be reviewed shortly. This literature is full of \emph{precise asymptotics}, mostly out of reach of our abstract framework. However, from Theorem \ref{th:lpsanov} we can derive a modest alternative to Cram\'er's upper bound which is notable in its wide applicability. See Section \ref{se:pf:lpstuff} for proof of the following:

\begin{corollary}[Cram\'er upper bound] \label{co:lpcramer}
Let $q \in (1,\infty)$, and let $E$ be a separable Banach space. Let $(X_i)_{i=1}^\infty$ be i.i.d.\ $E$-valued random variables with $\E\|X_1\|^q < \infty$. Define $\Lambda : E^* \rightarrow \R \cup \{\infty\}$ by\footnote{Here, $E^*$ denotes the continuous dual of $E$.}
\[
\Lambda(x^*) = \inf\left\{m \in \R : \E\left[[\left(1+\langle x^*,X_1\rangle - m\right)^+]^q\right] \le 1\right\},
\]
and define $\Lambda^*(x) = \sup_{x^* \in E^*}\left(\langle x^*,x\rangle - \Lambda(x^*)\right)$ for $x \in E$.
Then, for every closed set $A \subset E$,
\[
\limsup_{n\rightarrow\infty}\,n^{q-1}\,\PP\left(\frac{1}{n}\sum_{i=1}^nX_i \in A\right) \le \left(\inf_{x \in A}\Lambda^*(x)\right)^{-q}.
\]
\end{corollary}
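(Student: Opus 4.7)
The plan is to apply Theorem \ref{th:lpsanov} with $\mu$ equal to the law of $X_1$ and $\psi(x) := \|x\|$, then transfer the bound from $\P_\psi(E)$ down to $E$ via the mean map. The hypothesis $\E\|X_1\|^q < \infty$ yields $\int \psi^q\,d\mu < \infty$, so Theorem \ref{th:lpsanov} is available. Define the Pettis integral $T : \P_\psi(E) \to E$ by $T(\nu) := \int_E y\,\nu(dy)$, so that $T(L_n) = \frac{1}{n}\sum_{i=1}^n X_i$, and set $\tilde A := T^{-1}(A)$. Then
\[
\PP\bigl(\tfrac{1}{n}\textstyle\sum_{i=1}^n X_i \in A\bigr) = \mu^n(L_n \in \tilde A).
\]

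The first key step is to check that $T$ is continuous, so that $\tilde A$ is closed. For $\psi = \|\cdot\|$, the $\P_\psi$-topology coincides with $1$-Wasserstein convergence, i.e., narrow convergence together with convergence of first moments. Since every $x^* \in E^*$ with $\|x^*\| \leq 1$ induces a $1$-Lipschitz functional $y \mapsto \langle x^*, y\rangle$, the Kantorovich-Rubinstein formula yields $|\langle x^*, T(\nu) - T(\nu')\rangle| \leq W_1(\nu,\nu')$; taking the supremum over the unit ball of $E^*$ gives $\|T(\nu) - T(\nu')\|_E \leq W_1(\nu,\nu')$, so $T$ is continuous. Applying Theorem \ref{th:lpsanov} to the closed set $\tilde A$ then bounds the left-hand side of the corollary by $\bigl(\inf_{\nu \in \tilde A}\|d\nu/d\mu\|_{L^p(\mu)} - 1\bigr)^{-1}$.

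The second key step is a H\"older-type bound showing $\|d\nu/d\mu\|_{L^p(\mu)} - 1 \geq \Lambda^*(T(\nu))$ whenever $d\nu/d\mu \in L^p(\mu)$. For any $x^* \in E^*$ and $m > \Lambda(x^*)$, the definition of $\Lambda(x^*)$ gives $\|[1 + \langle x^*,\cdot\rangle - m]^+\|_{L^q(\mu)} \leq 1$, so H\"older's inequality applied to
\[
1 + \langle x^*, T(\nu)\rangle - m = \int [1 + \langle x^*, y\rangle - m]\,\nu(dy) \leq \int [1 + \langle x^*, y\rangle - m]^+ \frac{d\nu}{d\mu}\,d\mu
\]
produces $1 + \langle x^*, T(\nu)\rangle - m \leq \|d\nu/d\mu\|_{L^p(\mu)}$. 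Letting $m \downarrow \Lambda(x^*)$, taking the supremum over $x^*$, and using $T(\nu) \in A$ gives $\|d\nu/d\mu\|_{L^p(\mu)} - 1 \geq \Lambda^*(T(\nu)) \geq \inf_{x \in A}\Lambda^*(x)$, which combined with the previous step closes the argument.

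The main obstacle is the norm-continuity of $T$: the $\P_\psi$-topology a priori only guarantees weak-continuity of $T$ (through the coordinates $\langle x^*, \cdot\rangle$), and one must identify the $\P_\psi$-topology with the stronger $1$-Wasserstein topology in order to upgrade to norm-continuity. Once this identification is in hand, the remainder of the proof is a direct H\"older computation.
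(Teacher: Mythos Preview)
Your proof is correct and follows the same overall contraction-principle structure as the paper: apply Theorem~\ref{th:lpsanov} to the pullback $\tilde A=T^{-1}(A)\subset\P_\psi(E)$ with $\psi=\|\cdot\|$, then compare $\inf_{\nu\in\tilde A}\bigl(\|d\nu/d\mu\|_{L^p(\mu)}-1\bigr)$ with $\inf_{x\in A}\Lambda^*(x)$.

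The genuine difference is in the second step. The paper invokes Proposition~\ref{pr:cramer-representation}, an abstract Fenchel--Moreau argument showing that the function $\Psi(x)=\inf\{\alpha(\nu):\int z\,\nu(dz)=x\}$ is convex and lower semicontinuous with conjugate $\rho(x^*)=\Lambda(x^*)$, which yields the \emph{equality} $\inf_{\nu\in\tilde A}\alpha(\nu)=\inf_{x\in A}\Lambda^*(x)$. You instead give a direct H\"older computation producing only the one-sided inequality $\alpha(\nu)\ge\Lambda^*(T(\nu))$, which is all that is needed for the upper bound. Your route is more elementary and self-contained, avoiding the general duality machinery; the paper's route is more structural and yields the sharper identity, which is of independent interest (it is exactly the generalization of the classical fact that the Cram\'er and Sanov rate functions are related by contraction). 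On the first step, you are also more explicit than the paper about the norm-continuity of the mean map $T$: the paper simply asserts that $B$ is $\psi$-weakly closed, while your identification of the $\psi$-weak topology with $W_1$ and the Kantorovich--Rubinstein bound $\|T(\nu)-T(\nu')\|\le W_1(\nu,\nu')$ makes this point transparent.
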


In analogy with the classical Cram\'er's theorem, the function $\Lambda$ in Corollary \ref{co:lpcramer} plays the role of the cumulant generating function.
In both Theorem \ref{th:lpsanov} and Corollary \ref{co:lpcramer}, notice that as soon as the constant on the right-hand side is finite we may conclude that the probabilities in question are $O(n^{1-q})$, consistent with some now-standard results on one-dimensional heavy tailed sums for events of the form $A=[r,\infty)$, for $r > 0$. For instance, as we mentioned in the previous subsection, if $(X_i)_{i=1}^\infty$ are i.i.d.\ real-valued random variables with mean zero and $\E|X_1|^q < \infty$, then the sharpest result possible under these assumptions is $\PP(X_1 + \cdots + X_n > nr) = o(n^{1-q})$. 
For $q > 2$, the Fuk-Nagaev inequality gives a related non-asymptotic bound; see \cite[Corollary 1.8]{nagaev1979large}, or \cite{einmahl2008characterization} for a Banach space version.

In general, we cannot expect a matching lower bound in Corollary \ref{co:lpcramer}, and thus nor can we in Theorem \ref{th:lpsanov}.
If stronger assumptions are made on $X_i$, such as regular variation, then corresponding lower bounds are known for certain sets $A$, but it remains unclear whether or not our abstract approach can recover such lower bounds. Refer to \cite{borovkov,foss2011introduction,mikosch-nagaev} for detailed overviews of such results, as well as the more recent \cite{denisov2008large,rhee2016sample} and references therein. Indeed, \emph{precise asymptotics} require detailed assumptions on the shape of the tails of $X_i$, and this is especially true in multivariate and infinite-dimensional contexts. An interesting recent line of work extends the theory of regular variation to metric spaces \cite{de2001convergence,hult2005functional,hult2006regular,lindskog2014regularly}, but again the assumptions on the underlying $\mu$ are much stronger than mere existence of a finite moment.

The only real strengths of our Corollary \ref{co:lpcramer}, compared to the deep literature on sums of heavy-tailed random variables, is its broad applicability. It requires only finite moments, applies in general (separable) Banach spaces, and allows for arbitrary closed sets $A$, the latter point being useful in that it enables contraction principle arguments.


Before turning to the next application, it is worth mentioning a few more loosely related papers. In connection with concentration of measure, the papers of Bobkov and Ding \cite{bobkov-ding,ding2014wasserstein} studied transport inequalities involving functionals like \eqref{intro:lpentropy}, resulting in characterizations of certain non-exponential tail bounds. Less closely related, Atar et al.\ in \cite{atar2015robust} exploit a variational representation for exponential integrals involving the functional \eqref{intro:lpentropy} and show how to use it to bound, for example, a large deviation probability for one model in terms of an alternative more tractable model; their work does not, however, appear to be applicable to situations with heavy tails.

\subsubsection{Stochastic optimization} \label{se:intro:optimization}

Let $\X$  be another Polish space. Consider a continuous function $h : \X \times E \rightarrow \R$ bounded from below, and define $V : \P(E) \rightarrow \R$ by 
\[
V(\nu) = \inf_{x \in \X}\int_Eh(x,w)\nu(dw).
\]
Fix $\mu \in \P(E)$ again as a reference measure. The most common and natural approach to solving the optimization problem $V(\mu)$ numerically is to construct i.i.d.\ samples $X_1,X_2,\ldots$ with law $\mu$ and study instead $V(L_n(X_1,\ldots,X_n))$, where as usual $L_n(X_1,\ldots,X_n)=\frac{1}{n}\sum_{i=1}^n\delta_{X_i}$. The two obvious questions are then:
\begin{enumerate}[(A)]
\item Does $V(L_n(X_1,\ldots,X_n))$ converge to $V(\mu)$?
\item Do the minimizers of $V(L_n(X_1,\ldots,X_n))$ converge to those of $V(\mu)$ in some sense?
\end{enumerate}
The answers to these questions are known to be affirmative in very general settings, using a form of set-convergence for question (B); see \cite{dupacova1988asymptotic,kall1987approximations,king1991epi}. Given this, we then hope to quantify the rate of convergence for both of these questions. This is done in the language of large deviations in a paper of Kaniovski et al. \cite{kaniovski-king-wets}, under a strong exponential integrability assumption derived from Cram\'er's condition. In this section we complement their results by showing that under weaker integrability assumptions we can still obtain polynomial rates of convergence:

\begin{theorem} \label{th:valueconvergence}
Suppose $\X$ is compact. Suppose the function $h$ is jointly continuous, and its sub-level sets are compact. Let $q \in (1,\infty)$ and $\mu \in \P(E)$ be such that, if
\[
\psi(w) := \left(\sup_{x \in \X}h(x,w)\right)^+,
\]
then $\int_E\psi^q\,d\mu < \infty$. Then, for each $\epsilon > 0$,
\[
\limsup_{n\rightarrow\infty}n^{q-1}\mu^n(|V(L_n)-V(\mu)| \ge \epsilon) < \infty.
\]
\end{theorem}

The proof is given in Section \ref{se:optimization}, where we also present a related result on the rate of convergence of the optimizers themselves, addressing question (B) above.

\subsection{Uniform upper bounds and martingales} \label{se:intro:uniform}

Certain classes of dependent sequences admit uniform upper bounds, which we derive from Theorem \ref{th:main-sanov} by working with
\begin{align}
\alpha(\nu) = \inf_{\mu \in M}H(\nu|\mu), \label{intro:robustentropic}
\end{align}
for a given convex weakly compact set $M \subset \P(E)$. The conjugate $\rho$, unsurprisingly, is $\rho(f) = \sup_{\mu \in M}\log\int e^f\,d\mu$, and $\rho_n$ turns out to be tractable as well:
\begin{align}
\rho_n(f) = \sup_{\mu \in M_n}\log\int_{E^n} e^f\,d\mu, \label{intro:robustentropic-rhon}
\end{align}
where $M_n$ is defined as the set of laws $\mu \in \P(E^n)$ with $\mu_{k-1,k} \in M$ for each $k=1,\ldots,n$, $\mu$-almost surely; in other words, $M_n$ is the set of laws of $E^n$-valued random variables $(X_1,\ldots,X_n)$, when the law of $X_1$ belongs to $M$ and so does the conditional law of $X_k$ given $(X_1,\ldots,X_{k-1})$, almost surely, for each $k=2,\ldots,n$. Theorem \ref{th:main-sanov} becomes
\[
\lim_{n\rightarrow\infty}\frac{1}{n}\log\sup_{\mu \in M_n}\int_{E^n}e^{nF\circ L_n}\,d\mu = \sup_{\mu \in M, \ \nu \in \P(E)}(F(\nu) - H(\nu|\mu)), \text{ for } F \in C_b(\P(E)).
\]
From this we derive a uniform large deviation upper bound, for closed sets $A \subset \P(E)$:
\begin{align}
\limsup_{n\rightarrow\infty}\frac{1}{n}\log\sup_{\mu \in M_n}\mu(L_n \in A) \le -\inf_{\mu \in M, \nu \in A}H(\nu | \mu). \label{intro:uniformLDP}
\end{align}
With a prudent choice of $M$, this specializes to an asymptotic relative of the Azuma-Hoeffding inequality. The novel feature here is that we can work with arbitrary closed sets and in multiple dimensions:

\begin{theorem} \label{th:azuma}
Let $\varphi : \R^d \rightarrow \R$, and define $\mathcal{S}_{d,\varphi}$ to be the set of $\R^d$-valued martingales $(S_k)_{k=0}^n$, defined on a common but arbitrary probability space, satisfying $S_0=0$ and
\[
\E\left[\left.\exp\left(\langle y, S_k-S_{k-1}\rangle\right)\right|S_0,\ldots,S_{k-1}\right] \le e^{\varphi(y)}, \ a.s., \text{ for } k=1,\ldots,n, \ y \in \R^d.
\]
Then, for closed sets $A \subset \R^d$, we have
\[
\limsup_{n\rightarrow\infty}\sup_{(S_k)_{k=0}^n \in \mathcal{S}_{d,\varphi}}\frac{1}{n}\log\PP\left(S_n/n \in A\right) \le -\inf_{x \in A}\varphi^*(x),
\]
where $\varphi^*(x) = \sup_{y \in \R^d}(\langle x,y\rangle - \varphi(y))$.
\end{theorem}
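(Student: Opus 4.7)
\emph{Plan.} The plan is to deduce the theorem from the uniform bound \eqref{intro:uniformLDP} via a prudent choice of $M$. First, I would define
\[
M := \left\{\mu \in \P(\R^d) : \int x\,\mu(dx) = 0 \text{ and } \int e^{\langle y, x\rangle}\,\mu(dx) \le e^{\varphi(y)} \text{ for all } y \in \R^d\right\},
\]
and verify that it is convex (clear from linearity) and weakly compact. For the latter, picking $y_1,\ldots,y_d \in \{y : \varphi(y) < \infty\}$ that span $\R^d$, the exponential moment bound combined with the zero-mean constraint gives uniform control of $\int|\langle y_i, x\rangle|\,d\mu$, hence uniform tightness by Markov, while Fatou applied to $\mu \mapsto \int e^{\langle y, x\rangle}\,d\mu$ yields weak closedness. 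The key observation is that $\mathcal{S}_{d,\varphi}$ corresponds bijectively to $M_n$ via the joint law of the increments $X_k := S_k - S_{k-1}$: the successive conditional distributions $\mu_{k-1,k}$ lie almost surely in $M$ by the martingale property (zero conditional mean) and the conditional exponential-moment hypothesis, so the joint law $\mu \in M_n$; the converse is immediate.

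Next I would apply \eqref{intro:uniformLDP} with this $M$ to obtain, for any closed $\tilde A \subset \P(\R^d)$,
\[
\limsup_{n\to\infty} \frac{1}{n}\log \sup_{\mu \in M_n} \mu(L_n \in \tilde A) \le -\inf_{\mu \in M,\ \nu \in \tilde A} H(\nu | \mu).
\]
The Donsker--Varadhan inequality $\int f\,d\nu \le H(\nu|\mu) + \log\int e^f\,d\mu$ applied with $f(x) = \langle y, x\rangle$ gives $\langle y, m_\nu\rangle - \varphi(y) \le H(\nu|\mu)$ for each $\mu \in M$ and $\nu \ll \mu$, where $m_\nu := \int x\,\nu(dx)$; taking the supremum over $y$ yields $\varphi^*(m_\nu) \le \inf_{\mu \in M} H(\nu|\mu)$, which is the mechanism by which the rate $\varphi^*$ enters.

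The main obstacle is that $\{S_n/n \in A\}$ is not directly of the form $\{L_n \in \tilde A\}$ with weakly closed $\tilde A$, since the mean map $\nu \mapsto m_\nu$ is not weakly continuous on $\P(\R^d)$. I would address this by truncation: with $\tau_R(x) := x \min(1, R/|x|)$ continuous and bounded, $\Phi_R(\nu) := \int \tau_R\,d\nu$ is weakly continuous, so for the closed $\epsilon$-fattening $A^\epsilon$ of $A$ the set $\tilde A^\epsilon_R := \Phi_R^{-1}(A^\epsilon)$ is closed in $\P(\R^d)$, and
\[
\{S_n/n \in A\} \subseteq \{L_n \in \tilde A^\epsilon_R\} \cup \left\{\left|\tfrac{1}{n}\sum_{k=1}^n (X_k - \tau_R(X_k))\right| > \epsilon\right\}.
\]
The overshoot event is controlled by super-exponentially small Chernoff estimates coming from iterating the conditional exponential-moment bound along spanning directions of $\mathrm{dom}(\varphi)$. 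The delicate final step is the limit passage: having bounded the main event by $-\inf_{\nu:\Phi_R(\nu) \in A^\epsilon}\varphi^*(m_\nu)$, I need to show that sending $\epsilon \downarrow 0$ and then $R \uparrow \infty$ recovers $-\inf_{x \in A}\varphi^*(x)$. This reduces to showing the truncation defect $|m_\nu - \Phi_R(\nu)|$ vanishes uniformly along any sequence $(\nu_k,\mu_k)$ with bounded relative entropy $H(\nu_k|\mu_k)$ and $\mu_k \in M$; I expect to obtain this uniform tail control from yet another Donsker--Varadhan estimate applied to tail test functions of the form $\beta |x|\mathbf{1}_{|x|>R}$, exploiting the uniform exponential moments of $\mu \in M$.
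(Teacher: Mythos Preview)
Your approach is sound and leads to the result, but it takes a different and more laborious route than the paper. The paper avoids the entire truncation apparatus by working in the $\psi$-weak topology with $\psi(x)=\sum_i|x_i|$. Concretely, it defines $M$ (without the zero-mean constraint, which is immaterial for the upper bound) and verifies, via Proposition~\ref{pr:robustcramercondition}, that $\sup_{\mu\in M}\int e^{\lambda\psi}\,d\mu<\infty$ for every $\lambda>0$, which yields the strong Cram\'er condition and hence $\psi$-weak pre-compactness of the sub-level sets of $\alpha(\cdot)=\inf_{\mu\in M}H(\cdot\,|\,\mu)$. Since the mean map $\nu\mapsto\int z\,\nu(dz)$ is continuous on $\P_\psi(\R^d)$, the set $B=\{\nu\in\P_\psi(\R^d):\int z\,\nu(dz)\in A\}$ is $\psi$-weakly closed, and Corollary~\ref{co:uniform-sanov} (i.e., the $\psi$-weak version of \eqref{intro:uniformLDP} coming from Theorem~\ref{th:main-sanov-extended}) applies directly. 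The identification of the rate then follows in one line from Proposition~\ref{pr:cramer-representation}, which gives $\inf\{\alpha(\nu):\int z\,\nu(dz)=x\}=\sup_y(\langle x,y\rangle-\rho(\langle\cdot,y\rangle))\ge\varphi^*(x)$.

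Your route stays in the weak topology, where the mean map is discontinuous, and compensates with the truncation $\tau_R$, a Chernoff estimate for the overshoot, and a final uniform-integrability argument for the defect $|m_\nu-\Phi_R(\nu)|$ along entropy-bounded sequences. That last step is exactly the content of $\psi$-weak pre-compactness of $\{\alpha\le C\}$, which you are effectively re-proving via Donsker--Varadhan applied to tail test functions. So both arguments ultimately rely on the same uniform exponential moment control; the paper packages it once (as the strong Cram\'er condition) and then works in a topology where no truncation is needed, whereas you unwind it by hand. Your approach has the virtue of being self-contained from \eqref{intro:uniformLDP} without invoking the $\psi$-weak machinery of Section~\ref{se:strongcramer}, at the cost of several additional limit passages.
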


By taking $(S_k)_{k=0}^n$ to be a random walk (that is, the increments are i.i.d.)\ such that $\varphi(y) = \log\E[e^{\langle y,S_1-S_0\rangle}] < \infty$,  it is readily checked that the bound of Theorem \ref{th:azuma} coincides with the upper bound from Cram\'er's theorem and is thus sharp.
F\"ollmer and Knispel \cite{follmer-knispel} found some results which loosely resemble \eqref{intro:uniformLDP} (see Corollary 5.3 therein), based on an analysis of the same risk measure $\rho$. See also \cite{hu2010cramer,fuqing2012relative} for somewhat related results on large deviations for capacities.

\subsection{Laws of large numbers} \label{se:intro:lln}
Some specializations of Theorem \ref{th:main-sanov} appear to have nothing to do with large deviations. For example, suppose $M \subset \P(E)$ is convex and compact, and let
\[
\alpha(\nu) = \begin{cases}
0 &\text{if } \nu \in M \\
\infty &\text{otherwise}.
\end{cases}.
\]
It can be shown that $\rho_n(f) = \sup_{\mu \in M_n}\int_{E^n}f\,d\mu$, where $M_n$ is defined as in Section \ref{se:intro:uniform}, for instance by a direct computation using \eqref{def:intro:rhon-recursive}. Theorem \ref{th:main-sanov} then becomes
\begin{align}
\lim_{n\rightarrow\infty}\sup_{\mu \in M_n}\int_{E^n}F \circ L_n\,d\mu = \sup_{\mu \in M}F(\mu), \text{ for each } F \in C_b(\P(E)). \label{def:robustLLN}
\end{align}
When $M =\{\mu\}$ is a singleton, so is $M_n = \{\mu^n\}$, and this simply expresses the weak convergence $\mu^n \circ L_n^{-1} \rightarrow \delta_\mu$. The general case can be interpreted as a robust law of large numbers, where ``robust'' refers to perturbations of the joint law of an i.i.d.\ sequence. More precisely, noting that $\sup_{\mu \in M}F(\mu) = \sup_{Q \in \P(M)}\int F\,dQ$,  one can derive from \eqref{def:robustLLN} certain forms of set-convergence (e.g., Painlev\'e-Kuratowski) of the sequence $\{\mu \circ L_n^{-1} : \mu \in M_n\}$ toward $\P(M) := \{Q \in \P(\P(E)) : Q(M)=1\}$, though we refrain from lengthening the paper with further details.
In another direction, \eqref{def:robustLLN} is closely related to laws of large numbers under nonlinear expectations \cite{peng2007law}.

\subsection{Optimal transport costs} \label{se:intro:optimaltransport}

Another interesting consequence of Theorem \ref{th:main-sanov} comes from choosing $\alpha$ as an optimal transport cost.
Fix  $\mu \in \P(E)$ and a lower semicontinuous function $c : E^2 \rightarrow [0,\infty]$, and define
\[
\alpha(\nu) = \inf_{\pi \in \Pi(\mu,\nu)}\int c\,d\pi,
\]
where $\Pi(\mu,\nu)$ was defined right after \eqref{def:wasserstein}. Under a modest additional assumption on $c$ (stated shortly in Corollary \ref{co:transport1}, proven later in Lemma \ref{le:tightnessfunction}), $\alpha$ satisfies our standing assumptions.

The dual $\rho$ can be identified using Kantorovich duality, and $\rho_n$ turns out to be the value of a stochastic optimal control problem. To illustrate this, it is convenient to work with probabilistic notation: Suppose $(X_i)_{i=1}^\infty$ is a sequence of i.i.d.\ $E$-valued random variables with common law $\mu$, defined on some fixed probability space. For each $n$, let $\Y_n$ denote the set of $E^n$-valued random variables $(Y_1,\ldots,Y_n)$ where $Y_k$ is $(X_1,\ldots,X_k)$-measurable for each $k=1,\ldots,n$. We think of elements of $\Y_n$ as adapted control processes. For each $n \ge 1$ and each $f \in B(E^n)$, we show in Proposition \ref{pr:rhon-optimaltransport} that
\begin{align}
\rho_n(f) = \sup_{(Y_1,\ldots,Y_n) \in \Y_n}\E\left[f(Y_1,\ldots,Y_n) - \sum_{i=1}^nc(X_i,Y_i)\right]. \label{def:optimaltransport-expression}
\end{align}
The expression \eqref{def:optimaltransport-expression} yields the following corollary of Theorem \ref{th:main-sanov}:

\begin{corollary} \label{co:transport1}
Suppose that for each compact set $K \subset E$, the function $h_K(y) := \inf_{x \in K}c(x,y)$ has pre-compact sub-level sets.\footnote{That is, the closure of $\{y \in E : h_K(y) \le m\}$ is compact for each $m \ge 0$. This assumption holds, for example, if $E$ is a subset of Euclidean space and there exists $y_0 \in E$ such that $c(x,y) \rightarrow \infty$ as $d(y,y_0) \rightarrow \infty$, uniformly for $x$ in compacts.} For each $F \in C_b(\P(E))$, we have
\begin{align}
\lim_{n\rightarrow\infty}\sup_{(Y_k)_{k=1}^n \in \Y_n}&\E\left[F(L_n(Y_1,\ldots,Y_n)) - \frac{1}{n}\sum_{i=1}^nc(X_i,Y_i)\right] = \sup_{\nu \in \P(E)}\left(F(\nu) - \alpha(\nu)\right) \nonumber \\
	&= \sup_{\pi \in \Pi(\mu)}\left(F(\pi(E \times \cdot)) - \int_{E \times E}c\,d\pi\right), \label{def:optimaltransport-mainlimit}
\end{align}
where $\Pi(\mu) = \cup_{\nu \in \P(E)}\Pi(\mu,\nu)$.
\end{corollary}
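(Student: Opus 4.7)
The plan is to combine Theorem \ref{th:main-sanov} with the stochastic control representation of $\rho_n$ asserted in Proposition \ref{pr:rhon-optimaltransport}. Specializing \eqref{def:optimaltransport-expression} to $f = nF\circ L_n$ and dividing by $n$ gives
\[
\frac{1}{n}\rho_n(nF\circ L_n) = \sup_{(Y_k)_{k=1}^n \in \Y_n}\E\left[F(L_n(Y_1,\ldots,Y_n)) - \frac{1}{n}\sum_{i=1}^n c(X_i,Y_i)\right],
\]
which is precisely the expression appearing on the left-hand side of \eqref{def:optimaltransport-mainlimit}. Theorem \ref{th:main-sanov} then identifies the $n\to\infty$ limit with $\sup_{\nu \in \P(E)}(F(\nu) - \alpha(\nu))$, which is the first equality of the corollary.

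To invoke Theorem \ref{th:main-sanov} I first verify that the optimal-transport functional $\alpha$ satisfies the standing assumptions. Convexity is a standard mixture-of-couplings argument: if $\pi_i \in \Pi(\mu,\nu_i)$ for $i=0,1$, then $(1-t)\pi_0 + t\pi_1 \in \Pi(\mu,(1-t)\nu_0 + t\nu_1)$. Properness is immediate since, for any $\nu$, a product coupling $\mu\otimes\nu$ witnesses $\alpha(\nu) \le \int c\, d(\mu\otimes\nu)$, which is finite for suitable $\nu$. Lower semicontinuity of $\alpha$ under weak convergence is a standard consequence of the lower semicontinuity of $c$ together with tightness of sets of couplings with fixed first marginal. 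The only delicate point is compactness of the sub-level sets $\{\alpha \le m\}$: tightness is supplied exactly by the hypothesis that $h_K(y) = \inf_{x \in K} c(x,y)$ has pre-compact sub-level sets, and this is the content of the cited Lemma \ref{le:tightnessfunction}, which I would invoke rather than reprove.

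For the second equality in \eqref{def:optimaltransport-mainlimit}, I would unpack the definition of $\alpha$ and interchange the two suprema:
\[
\sup_{\nu \in \P(E)}\bigl(F(\nu) - \alpha(\nu)\bigr) = \sup_{\nu \in \P(E)}\sup_{\pi \in \Pi(\mu,\nu)}\left(F(\nu) - \int_{E\times E} c\,d\pi\right) = \sup_{\pi \in \Pi(\mu)}\left(F(\pi(E\times \cdot)) - \int_{E\times E} c\,d\pi\right),
\]
using that $F(\nu)$ depends on $\pi \in \Pi(\mu,\nu)$ only through its second marginal $\nu = \pi(E \times \cdot)$, and that $\Pi(\mu) = \bigcup_{\nu} \Pi(\mu,\nu)$ parametrizes all couplings with first marginal $\mu$.

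The main obstacle is not in the argument itself, which is essentially bookkeeping on top of Theorem \ref{th:main-sanov}, but rather in the two inputs it relies on: the stochastic-control formula for $\rho_n$ in Proposition \ref{pr:rhon-optimaltransport}, which in turn rests on Kantorovich duality combined with the recursive structure \eqref{def:intro:rhon-recursive}, and the tightness lemma \ref{le:tightnessfunction}. Once both are in hand, the corollary follows by the substitution above.
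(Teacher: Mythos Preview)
Your proposal is correct and follows essentially the same route as the paper: verify the standing assumptions on $\alpha$ (with compactness of sub-level sets supplied by Lemma \ref{le:tightnessfunction}), identify the left-hand side as $\frac{1}{n}\rho_n(nF\circ L_n)$ via Proposition \ref{pr:rhon-optimaltransport}, apply Theorem \ref{th:main-sanov}, and unwind the definition of $\alpha$ for the second equality. The only cosmetic difference is that the paper obtains convexity and lower semicontinuity of $\alpha$ in one stroke from the Kantorovich dual representation $\alpha(\nu)=\sup_{f}\bigl(\int f\,d\nu-\rho(f)\bigr)$, whereas you argue convexity directly by mixing couplings; both are fine.
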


This can be seen as a long-time limit of the optimal value of the control problems. However, the renormalization in $n$ is a bit peculiar in that it enters inside of the terminal cost $F$, and there does not seem to be a direct connection with ergodic control. A direct proof of \eqref{def:optimaltransport-mainlimit} is possible but seems to be no simpler and potentially narrower in scope.

While the pre-limit expression in \eqref{def:optimaltransport-mainlimit} may look peculiar, we include this example in part because it is remarkably tractable and  in part because the limiting object is quite ubiquitous, encompassing a wide variety of variational problems involving optimal transport costs. Two notable recent examples can be found in the study of Cournot-Nash equilibria in large-population games \cite{blanchet2015optimal} and in the theory of Wasserstein barycenters \cite{agueh2011barycenters}.

\subsection{Alternative choices of $\alpha$} \label{se:intro:alternatives}
There are many other natural choices of $\alpha$ for which the implications of Theorem \ref{th:main-sanov} remain unclear. For example, consider the $\varphi$-divergence
\[
\alpha(\nu) = \int_E\varphi(d\nu/d\mu)\,d\mu, \text{ for } \nu \ll \mu, \quad\quad \alpha(\nu)=\infty \text{ otherwise},
\]
where $\mu \in \P(E)$ and $\varphi : \R_+ \rightarrow \R$ is convex and satisfies $\varphi(x)/x \rightarrow \infty$ as $x \rightarrow \infty$. This $\alpha$ has weakly compact sub-level sets, according to \cite[Lemma 6.2.16]{dembozeitouni}, and it is clearly convex. The dual, known in the risk literature as the \emph{optimized certainty equivalent,} was computed by Ben-Tal and Teboulle \cite{bental-teboulle-1986,bental-teboulle-2007} to be
\[
\rho(f) = \inf_{m \in \R}\left(\int_E\varphi^*\left(f(x)-m\right)\mu(dx) + m\right),
\]
where $\varphi^*(x) = \sup_{y \in \R}(xy - \varphi(y))$ is the convex conjugate.
We did not find any good expressions or estimates for $\rho_n$ or $\alpha_n$, so the interpretation of the main Theorem \ref{th:main-sanov} eludes us in this case.

A related choice is the \emph{shortfall risk measure} introduced by F\"ollmer and Schied \cite{follmer-schied-convex}:
\begin{align}
\rho(f) = \inf\left\{m \in \R : \int_E\ell(f(x)-m)\mu(dx) \le 1\right\}. \label{intro:shortfall}
\end{align}
This choice of $\rho$ and the corresponding (tractable!) $\alpha$ are discussed briefly in Section \ref{se:shortfall}.
The choice of $\ell(x) = [(1+x)^+]^q$ corresponds to \eqref{intro:lpentropy}, and we make extensive use of this in Section \ref{se:nonexpLDP}, as was discussed in Section \ref{se:intro:nonexpLDP}.
The choice of $\ell(x) = e^x$ recovers the classical case $\rho(f) = \log\int_Ee^f\,d\mu$.
Aside from these two examples, for general $\ell$, we found no useful expressions or estimates for $\rho_n$ or $\alpha_n$.
In connection with tails of random variables, shortfall risk measures have an intuitive appeal stemming from the following simple analogue of Chernoff's bound, observed in \cite[Proposition 3.3]{lacker-liquidity}: If $\gamma(\lambda) = \rho(\lambda f)$ for all $\lambda \ge 0$, where $f$ is some given measurable function, then $\mu(f > t) \le 1/\ell(\gamma^*(t))$ for all $t \ge 0$, where $\gamma^*(t) = \sup_{\lambda \ge 0}(\lambda t - \gamma(\lambda))$.

It is worth pointing out the natural but ultimately fruitless idea of working with $\rho(f) = \varphi^{-1}(\int_E\varphi(f)\,d\mu)$, where $\varphi$ is increasing. Such functionals were studied first it seems by Hardy, Littlewood, and P\'olya \cite[Chapter 3]{hardy-littlewood-polya}, providing necessary and sufficient conditions for $\rho$ to be convex  (rediscovered in \cite{bental-teboulle-1986}). Using the formula \eqref{def:intro:rhon-recursive} to compute $\rho_n$, this choice would lead to the exceptionally pleasant formula $\rho_n(f) = \varphi^{-1}(\int_{E^n}\varphi(f)\,d\mu^n)$, which we observed already in the classical case $\varphi(x)=e^x$. Unfortunately, however, such a $\rho$ cannot come from a functional $\alpha$ on $\P(E)$, in the sense that \eqref{intro:duality} cannot hold unless $\varphi$ is affine or exponential.
The problem, as is known in the risk measure literature, is that the additivity property $\rho(f+c)=\rho(f)+c$ for all $c \in \R$ and $f \in B(E)$ fails unless $\varphi$ is affine or exponential (c.f. \cite[Proposition 2.46]{follmer-schied-book}).

\subsection{Interpreting Theorem \ref{th:main-sanov} in terms of risk measures} \label{se:intro:riskmeasures}
It is straightforward to rewrite Theorem \ref{th:main-sanov} in a language more in line with the literature on convex risk measures, for which we again defer to \cite{follmer-schied-book} for background.
Let $(\Omega,\F)$ be a measurable space, and suppose $\varphi$ is a convex risk measure on the set $B(\Omega,\F)$ of bounded measurable functions. That is, $\varphi : B(\Omega,\F) \rightarrow \R$ is convex, $\varphi(f + c) = \varphi(f)+c$ for all $f \in B(\Omega,\F)$ and $c \in \R$, and $\varphi(f) \ge \varphi(g)$ whenever $f \ge g$ pointwise. Suppose we are given a sequence of $E$-valued random variables $(X_i)_{i=1}^\infty$, i.e., measurable maps $X_i : \Omega \rightarrow E$. Assume $X_i$ have the following independence property, identical to Peng's notion of independence under nonlinear expectations \cite{peng2010nonlinear}: for $n \ge 1$ and $f \in B(E^n)$
\begin{align}
\varphi(f(X_1,\ldots,X_n)) = \varphi\left[\varphi(f(X_1,\ldots,X_{n-1},x))|_{x = X_n}\right]. \label{def:peng-independence}
\end{align}
In particular, $\varphi(f(X_i))=\varphi(f(X_1))$ for all $i$.
Define $\alpha : \P(E) \rightarrow (-\infty,\infty]$ by
\[
\alpha(\nu) = \sup_{f \in B(E)}\left(\int_Ef\,d\nu - \varphi(f(X_1))\right).
\]
Additional assumptions on $\varphi$ (see, e.g., Theorem \ref{th:rho-to-alpha} below) can ensure that $\alpha$ has weakly compact sub-level sets, so that Theorem \ref{th:main-sanov} applies. Then, for $F \in C_b(\P(E))$, 
\begin{align}
\lim_{n\rightarrow\infty}\frac{1}{n}\varphi\left(nF(L_n(X_1,\ldots,X_n))\right) = \sup_{\nu \in \P(E)}(F(\nu)-\alpha(\nu)), \label{intro:def:riskmeasure-interpretation}
\end{align}
Indeed, in our previous notation, $\rho_n(f)=\varphi(f(X_1,\ldots,X_n))$  for $f \in B(E^n)$.

In the risk measure literature, one thinks of $\varphi(f)$ as the risk associated to an uncertain financial loss $f \in B(\Omega,\F)$. With this in mind, and with $Z_n=F(L_n(X_1,\ldots,X_n))$, the quantity $\varphi(nZ_n)$ appearing in \eqref{intro:def:riskmeasure-interpretation} is the risk-per-unit of an investment in $n$ units of $Z_n$. One might interpret $Z_n$ as capturing the \emph{composition} of the investment, while the multiplicative factor $n$ represents the \emph{size} of the investment. As $n$ increases, say to $n+1$, the investment is ``rebalanced'' in the sense that one additional independent component, $X_{n+1}$, is incorporated and the size of the total investment is increased by one unit. The limit in \eqref{intro:def:riskmeasure-interpretation} is then an asymptotic evaluation of the risk-per-unit of this rebalancing scheme.

\subsection{Extensions}

Broadly speaking, the book of Dupuis and Ellis \cite{dupuis-ellis} and numerous subsequent works illustrate how the classical convex duality between relative entropy and cumulant-generating functions can serve as a foundation from which to derive an impressive range of large deviation principles.
Similarly, each alternative dual pair  $(\alpha,\rho)$ should provide an alternative foundation for a potentially equally wide range of limit theorems. From this perspective, our work raises more questions than it answers by restricting attention to analogs of the two large deviation principles of Sanov and Cram\'er.
It is possible, for instance, that an analogue of Mogulskii's theorem (see \cite{mogul1977large} or \cite[Section 3]{dupuis-ellis}) holds in our context, though one must not expect any such analogue to look too much like a heavy-tailed large deviation principle, in light of the negative result of \cite[Section 4.4]{rhee2016sample}.\footnote{Thanks to an anonymous referee for this reference.} These speculations are pursued no further but are meant to convey the versatility of our framework.
In fact, extensions and applications of our framework have appeared since the first version of this paper. First, \cite{eckstein2017extended} extended the ideas beyond the i.i.d.\ setting, to the study of occupation measures of Markov chains. More recently, \cite{backhoff2018non} applied Theorem \ref{th:main-sanov} to obtain new limit theorems for Brownian motion, with connections to Schilder's theorem, vanishing noise limits of BSDEs and PDEs, and Schr\"{o}dinger problems.

\subsection{Outline of the paper}

The remainder of the paper is organized as follows. Section \ref{se:riskmeasures} begins by clarifying the $(\alpha,\rho)$ duality, explaining some useful properties of $\rho$ and $\rho_n$ and extending their definitions to unbounded functions before giving. In Section \ref{se:sanovproof} we give the proof of an extension of Theorem \ref{th:main-sanov}, which contains Theorem \ref{th:main-sanov} as a special case but is extended to stronger topologies and unbounded functions $F$. See also Section \ref{se:contraction} for abstract analogs of the contraction principle and Cram\'er's theorem. Section \ref{se:psi-compact-levelsets} elaborates on the additional topological assumptions needed for the extension of the main theorem. Then, Section \ref{se:nonexpLDP} focuses on the particular choice of $\alpha$ in \eqref{intro:lpentropy}, providing proofs of the claims of Section \ref{se:intro:nonexpLDP}. 
Sections \ref{se:uniform} and \ref{se:optimaltransport} respectively elaborate on the examples of \ref{se:intro:uniform} and \ref{se:intro:optimaltransport}. Appendix \ref{se:rhon} proves a different representations of $\rho_n$, namely those of \eqref{def:intro:rhon-recursive}.
Finally, two minor technical results are relegated to Appendix \ref{se:technical-lemma}.

\section{Convex duality and an extension of Theorem \ref{th:main-sanov}} \label{se:riskmeasures}

We begin by outlining the key features of the $(\alpha,\rho)$ duality, as a first step toward stating and proving an extension of the main theorem as well as an abstract analogue of Cram\'er's theorem. The first two theorems below are borrowed from the literature on convex risk measures, for which an excellent reference is the book of F\"ollmer and Schied \cite{follmer-schied-book}.
While we will make use of some of the properties listed in Theorem \ref{th:alpha-to-rho}, the goal of the first two theorems is more to illustrate how one can make $\rho$ the starting point rather than $\alpha$. In particular, Theorem \ref{th:rho-to-alpha} will not be needed in the sequel. For proofs of Theorems \ref{th:alpha-to-rho} and \ref{th:rho-to-alpha}, refer to Bartl \cite[Theorem 2.6]{bartl2016pointwise}.

\begin{theorem} \label{th:alpha-to-rho}
Suppose $\alpha : \P(E) \rightarrow (-\infty,\infty]$ is convex and has weakly compact sub-level sets. Define $\rho : B(E) \rightarrow \R$ as in \eqref{intro:duality}. Then the following hold:
\begin{enumerate}
\item[(R1)] If $f \ge g$ pointwise then $\rho(f) \ge \rho(g)$.
\item[(R2)] If $f \in B(E)$ and $c \in \R$, then $\rho(f+c)=\rho(f)+c$.
\item[(R3)] If $f,f_n \in B(E)$ with $f_n \uparrow f$ pointwise, then $\rho(f_n) \uparrow \rho(f)$.
\item[(R4)] If $f_n \in C_b(E)$ and $f \in B(E)$ with $f_n \downarrow f$ pointwise, then $\rho(f_n) \downarrow \rho(f)$.
\end{enumerate}
Moreover, for $\nu \in \P(E)$ we have
\begin{align}
\alpha(\nu) = \sup_{f \in C_b(E)}\left(\int_Ef\,d\nu - \rho(f)\right). \label{def:rho-to-alpha}
\end{align}
\end{theorem}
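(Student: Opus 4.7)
The plan is to verify (R1)--(R4) in turn and then derive the biduality formula via the Fenchel-Moreau theorem. The preliminary step is to check that $\rho$ is finite-valued on $B(E)$: weakly compact sub-level sets are closed, so $\alpha$ is weakly lower semicontinuous, and the nested non-empty intersection of the compact sets $\{\alpha \le c_n\}$ with $c_n \downarrow \inf_{\P(E)}\alpha$ forces $m := \inf_{\P(E)}\alpha$ to be finite and attained. Then $\int f\,d\nu_0 - \alpha(\nu_0) \le \rho(f) \le \|f\|_\infty - m$ for any fixed $\nu_0$ with $\alpha(\nu_0) < \infty$. Properties (R1) and (R2) are then immediate from the definition: the integrand $\int f\,d\nu$ is monotone in $f$, and $\int(f+c)\,d\nu - \alpha(\nu) = \int f\,d\nu + c - \alpha(\nu)$ since $\nu(E) = 1$.

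For (R3), given $f_n \uparrow f$ with $f_n, f \in B(E)$, monotonicity yields $\rho(f_n) \le \rho(f)$. For any $\nu$ with $\alpha(\nu) < \infty$, bounded convergence (the $f_n$ lie between $f_1$ and $f$ and are hence uniformly bounded) gives $\int f_n\,d\nu \to \int f\,d\nu$, so $\lim_n \rho(f_n) \ge \int f\,d\nu - \alpha(\nu)$, and taking the supremum over $\nu$ completes the argument.

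Property (R4) is the main work and the place where compactness of sub-level sets enters essentially. Given $f_n \in C_b(E)$ with $f_n \downarrow f \in B(E)$, monotonicity gives $\rho(f_n) \ge \rho(f)$, so it suffices to upper-bound the limit. Because the $f_n$ are uniformly bounded by $\|f_1\|_\infty$, plugging any fixed $\nu_0$ with $\alpha(\nu_0) < \infty$ into the sup defining $\rho(f_n)$ shows that one may restrict the supremum to a single weakly compact sub-level set $K := \{\alpha \le c\}$, where $c$ depends only on $\|f_1\|_\infty$, $\alpha(\nu_0)$, and $m$. On $K$ the map $\nu \mapsto \int f_n\,d\nu - \alpha(\nu)$ is upper semicontinuous (weak continuity of $\nu \mapsto \int f_n\,d\nu$ uses $f_n \in C_b(E)$; $\alpha$ is lsc), so the supremum is attained at some $\nu_n^* \in K$. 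I would extract a weakly convergent subsequence $\nu_{n_k}^* \to \nu^* \in K$, and for any fixed $N$ and all $n_k \ge N$ use $\int f_{n_k}\,d\nu_{n_k}^* \le \int f_N\,d\nu_{n_k}^*$, passing $k \to \infty$ by weak convergence (here $f_N \in C_b(E)$ is critical) and then $N \to \infty$ by bounded convergence against the fixed measure $\nu^*$, to get $\limsup_k \int f_{n_k}\,d\nu_{n_k}^* \le \int f\,d\nu^*$; combined with $\alpha(\nu^*) \le \liminf_k \alpha(\nu_{n_k}^*)$, this yields $\limsup_k \rho(f_{n_k}) \le \rho(f)$. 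This delicate interchange of the pointwise-only convergence of $f_n$ with the weak-only convergence of $\nu_{n_k}^*$ is the main obstacle, and it succeeds precisely because of continuity of the $f_n$ and compactness of the sub-level sets.

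For the biduality formula I would apply the Fenchel-Moreau theorem on the locally convex space $\text{ca}(E)$ of finite signed Borel measures equipped with the topology $\sigma(\text{ca}(E), C_b(E))$, whose continuous dual is canonically $C_b(E)$. Extend $\alpha$ to $\tilde\alpha : \text{ca}(E) \to (-\infty,\infty]$ by setting $\tilde\alpha = +\infty$ off $\P(E)$; this extension is convex (convex combinations of probability measures are probability measures) and weakly lsc (its sub-level sets coincide with those of $\alpha$, hence are weakly compact in $\text{ca}(E)$). The conjugate $\tilde\alpha^*(f) = \sup_{\mu \in \text{ca}(E)}(\int f\,d\mu - \tilde\alpha(\mu))$ reduces to $\rho(f)$ for $f \in C_b(E)$, so Fenchel-Moreau gives $\tilde\alpha = \tilde\alpha^{**}$, which on $\P(E)$ is exactly $\alpha(\nu) = \sup_{f \in C_b(E)}(\int f\,d\nu - \rho(f))$; the opposite inequality is immediate from the definition of $\rho$.
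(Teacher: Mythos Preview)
Your proof is correct and self-contained, whereas the paper does not actually prove this theorem: it simply refers the reader to Bartl \cite[Theorem 2.4]{bartl2016pointwise}. So there is no ``paper's own proof'' to compare against, and your argument fills that gap nicely.

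A couple of minor remarks. First, your claim that the $f_n$ in (R4) are ``uniformly bounded by $\|f_1\|_\infty$'' is slightly imprecise: you have $f_n \le f_1 \le \|f_1\|_\infty$ from above, but the uniform lower bound comes from $f_n \ge f \ge -\|f\|_\infty$; the correct uniform bound is $\max(\|f_1\|_\infty,\|f\|_\infty)$. This does not affect the argument. Second, your preliminary step tacitly uses that $\alpha$ is not identically $+\infty$ (so that some sub-level set is non-empty); this is implicit in the statement $\rho : B(E) \to \R$ and is part of the paper's standing assumptions, but is not literally among the hypotheses of the theorem as stated. Finally, in the biduality step it is worth remarking that the sub-level sets of $\tilde\alpha$, being compact in $\P(E)$ with the relative $\sigma(\mathrm{ca}(E),C_b(E))$ topology, are compact and hence closed in $\mathrm{ca}(E)$ itself; you state this but the reader may appreciate the one-line justification.
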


\begin{theorem} \label{th:rho-to-alpha} 
Suppose $\rho : B(E) \rightarrow \R$ is convex and satisfies properties (R1--4) of Theorem \ref{th:alpha-to-rho}.
Define $\alpha : \P(E) \rightarrow (-\infty,\infty]$ by \eqref{def:rho-to-alpha}.
Then $\alpha$ is convex and has weakly compact sub-level sets. Moreover, the identity \eqref{intro:duality} holds.
\end{theorem}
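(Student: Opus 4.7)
The plan is to verify three items in sequence: (i) $\alpha$ is convex and weakly lower semicontinuous; (ii) the sub-level sets of $\alpha$ are weakly compact; and (iii) the duality \eqref{intro:duality} holds. For (i), $\alpha$ is the pointwise supremum over $f \in C_b(E)$ of the weakly continuous affine maps $\nu \mapsto \int f\,d\nu - \rho(f)$, so convexity and weak lower semicontinuity are immediate; the same observation yields the easy inequality $\tilde\rho(f) := \sup_\nu(\int f\,d\nu - \alpha(\nu)) \le \rho(f)$ for $f \in C_b(E)$.

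For (ii), properties (R1), (R2), and convexity force $\rho$ to be $1$-Lipschitz on $(C_b(E), \|\cdot\|_\infty)$. To obtain tightness of $\{\nu : \alpha(\nu) \le c\}$, I would use Urysohn on a compatible complete metric to construct, for each $m, k \ge 1$, a compact $K_m^{(k)}$ and a function $\psi_m^{(k)} \in C_b(E)$ with $0 \le \psi_m^{(k)} \le 1$, $\psi_m^{(k)} \equiv 1$ outside a $1/m$-neighborhood of $K_m^{(k)}$, and $\psi_m^{(k)} \downarrow 0$ pointwise as $k \to \infty$ for fixed $m$. By (R4), $\rho(\lambda\psi_m^{(k)}) \downarrow \rho(0)$ for each $\lambda > 0$, which combined with the bound $\int\psi_m^{(k)}\,d\nu \le (\alpha(\nu) + \rho(\lambda\psi_m^{(k)}))/\lambda$ delivers the desired uniform tightness; Prokhorov together with (i) then gives weak compactness.

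The crux of the proof is (iii). I would first establish the identity on $C_b(E)$ via Fenchel--Moreau: as $\rho$ is convex and Lipschitz on the Banach space $(C_b(E), \|\cdot\|_\infty)$, one has $\rho = \rho^{**}$ with conjugate taken over $C_b(E)^*$. Monotonicity and cash invariance confine the effective domain of $\rho^*$ to positive, unit-mass linear functionals, and the main obstacle is to rule out purely finitely additive $\ell$ among the optimizers. Here (R4) enters via a Daniell argument: if $\rho^*(\ell) < \infty$ and $f_n \in C_b(E)$ satisfies $f_n \downarrow 0$, the bound $\ell(\lambda f_n) \le \rho(\lambda f_n) + \rho^*(\ell)$ together with (R4) and sending $\lambda \to \infty$ forces $\ell(f_n) \to 0$, which is precisely the Daniell condition identifying $\ell$ with a Borel probability measure. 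Thus $\rho^*|_{\P(E)} = \alpha$ and the identity holds on $C_b(E)$.

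Finally, to extend from $C_b(E)$ to $B(E)$, note that (R3) upgrades via the representation $f = \sup_n \inf_{k \ge n} f_k$ (together with (R1)) to the full Fatou property on $B(E)$, and the analogous Fatou property for $\tilde\rho$ follows from the monotone convergence theorem combined with the interchange $\sup_n \sup_\nu = \sup_\nu \sup_n$. The class $\G := \{f \in B(E) : \rho(f) = \tilde\rho(f)\}$ then contains $C_b(E)$ and is closed under bounded monotone limits in both directions; since iterated monotone limits starting from $C_b(E)$ exhaust the Baire hierarchy, which on a Polish space coincides with the bounded Borel functions, we conclude $\G = B(E)$.
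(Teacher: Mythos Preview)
The paper does not give its own proof of this theorem; it simply cites Bartl \cite[Theorem 2.4]{bartl2016pointwise}. Your proposal therefore goes well beyond what the paper does, and parts (i), (ii), and the $C_b(E)$ half of (iii) follow standard lines and are essentially correct (the tightness argument in (ii) needs the extra step of intersecting the $1/m$-neighborhoods over $m$ to produce a genuinely compact set, but this is routine).

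There is, however, a real gap in your extension of the duality from $C_b(E)$ to all of $B(E)$. You assert that $\G = \{f \in B(E) : \rho(f) = \tilde\rho(f)\}$ is closed under bounded monotone limits \emph{in both directions}, but you only justify the increasing direction (via (R3) for $\rho$ and monotone convergence for $\tilde\rho$). For the decreasing direction you would need $\rho(f_n)\downarrow\rho(f)$ and $\tilde\rho(f_n)\downarrow\tilde\rho(f)$ whenever $f_n\downarrow f$ in $B(E)$, and neither follows from the Fatou property you establish: Fatou gives only $\rho(f)\le\lim_n\rho(f_n)$ along a decreasing sequence, which is the wrong inequality. Property (R4) supplies continuity from above only along sequences that start in $C_b(E)$, and the weak compactness of the sub-level sets of $\alpha$ does not rescue the argument either, because for $f_n\in B(E)$ the map $\nu\mapsto\int f_n\,d\nu$ is not weakly continuous and the minimax/compactness step fails. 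Concretely, from $f_n\in\G$ with $f_n\downarrow f$ one obtains $\rho(f)\le\inf_n\tilde\rho(f_n)$, but $\inf_n\sup_\nu(\int f_n\,d\nu-\alpha(\nu))\ge\sup_\nu\inf_n(\int f_n\,d\nu-\alpha(\nu))=\tilde\rho(f)$, and this inequality goes the wrong way. The passage from $C_b(E)$ to $B(E)$ is in fact one of the more delicate points in Bartl's proof and does not reduce to a naive monotone-class argument.
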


For the rest of the paper, unless stated otherwise, we work at all times with the standing assumptions on $\alpha$ described in the introduction:

\begin{assumption}
The function $\alpha : \P(E) \rightarrow (-\infty,\infty]$ is convex, has weakly compact sub-level sets, and is not identically equal to $\infty$. Lastly, $\rho$ is defined as in \eqref{intro:duality}.
\end{assumption}

We next extend the domain of $\rho$ and $\rho_n$ to unbounded functions. Let $\overline{\R} = \R \cup \{-\infty,\infty\}$. We adopt the convention that $\infty - \infty := -\infty$, although this will have few consequences aside from streamlined definitions. In particular, if $\nu \in \P(E^n)$ and a measurable function $f : E^n \rightarrow \overline{\R}$ and satisfies $\int f^-\,d\nu = \int f^+\,d\nu = \infty$, we define $\int f\,d\nu = -\infty$.

\begin{definition} \label{def:rho-unbounded}
For $n \ge 1$ and measurable $f : E^n \rightarrow \overline{\R}$, define
\[
\rho_n(f) = \sup_{\nu \in \P(E^n)}\left(\int_{E^n}f\,d\nu - \alpha_n(\nu)\right).
\]
As usual, abbreviate $\rho \equiv \rho_1$. It is worth emphasizing that while $\rho(f)$ is finite for bounded $f$, it can be either $+\infty$ or $-\infty$ when $f$ is unbounded.
\end{definition}

%
%
%

\subsection{Stronger topologies on $\P(E)$} \label{se:strongertopologies}

As a last preparation, we discuss a well known class of topologies on subsets of $\P(E)$ with which we will work frequently.
Given a continuous function $\psi : E \rightarrow \R_+ := [0,\infty)$, define
\[
\P_\psi(E) = \left\{\mu \in \P(E) : \int_E\psi\,d\mu < \infty\right\}.
\]
Endow $\P_\psi(E)$ with the (Polish) topology generated by the maps $\nu \mapsto \int_Ef\,d\nu$, where $f : E \rightarrow \R$ is continuous and $|f| \le 1+\psi$; we call this the \emph{$\psi$-weak topology}.
A useful fact about this topology is that a set $M \subset \P_\psi(E)$ is pre-compact if and only if for every $\epsilon > 0$ there exists a compact set $K \subset E$ such that
\[
\sup_{\mu \in M}\int_{K^c}\psi\,d\mu \le \epsilon.
\]
This is easily proven directly using Prokhorov's theorem, or refer to \cite[Corollary A.47]{follmer-schied-book}. It is worth noting that if $d$ is a compatible metric on $E$ and $\psi(x)=d^p(x,x_0)$ for some fixed $x_0 \in E$ and $p \ge 1$, then the $\psi$-weak topology is nothing but the $p$-Wasserstein topology associated with the metric $d$ \cite[Theorem 7.12]{villani-book}.

\subsection{An extension of Theorem \ref{th:main-sanov}} \label{se:sanovproof}

In this section we state and prove a useful generalization of Theorem \ref{th:main-sanov} for stronger topologies and unbounded functions, taking advantage of the preparations of the previous sections.
At all times in this section, the standing assumptions on $(\alpha,\rho)$ (stated early in Section \ref{se:riskmeasures}) are in force.
Part of Theorem \ref{th:main-sanov-extended} below requires the assumption that the sub-level sets of $\alpha$ are pre-compact in $\P_\psi(E)$, and this rather opaque assumption will be explored in more detail in Section \ref{se:strongcramer}.

\begin{theorem} \label{th:main-sanov-extended}
Let $\psi : E \rightarrow \R_+$ be continuous. If $F : \P_\psi(E) \rightarrow \R \cup \{\infty\}$ is lower semicontinuous (with respect to the $\psi$-weak topology) and bounded from below, then 
\[
\liminf_{n\rightarrow\infty}\frac{1}{n}\rho_n(nF \circ L_n) \ge \sup_{\nu \in \P_\psi(E)}(F(\nu) - \alpha(\nu)).
\]
Suppose also that the sub-level sets of $\alpha$ are pre-compact subsets of $\P_\psi(E)$. 
If $F : \P_\psi(E) \rightarrow \R \cup \{-\infty\}$ is upper semicontinuous and bounded from above, then 
\[
\limsup_{n\rightarrow\infty}\frac{1}{n}\rho_n(nF \circ L_n) \le \sup_{\nu \in \P_\psi(E)}(F(\nu) - \alpha(\nu)).
\]
\end{theorem}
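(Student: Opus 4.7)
The proof splits into a lower bound and an upper bound, which I would handle separately. For the \emph{lower bound}, the idea is to use a product measure as a trial measure in the supremum defining $\rho_n$. Fix $\nu \in \P_\psi(E)$ with $\alpha(\nu) < \infty$. The disintegrations of $\nu^{\otimes n}$ are all equal to $\nu$, so $\alpha_n(\nu^{\otimes n}) = n\alpha(\nu)$, and plugging in as a trial measure gives
\[
\frac{1}{n}\rho_n(nF\circ L_n) \ge \int_{E^n} F(L_n)\,d\nu^{\otimes n} - \alpha(\nu).
\]
Under $\nu^{\otimes n}$, the strong law of large numbers (both for a convergence-determining class of continuous bounded functions and for $\psi(X_i)$, integrable because $\nu \in \P_\psi(E)$) shows that $L_n \to \nu$ almost surely in the $\psi$-weak topology. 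Since $F$ is $\psi$-weakly lower semicontinuous and bounded below, Fatou yields $\liminf_n \int F(L_n)\,d\nu^{\otimes n} \ge F(\nu)$. Taking the supremum over $\nu$, with a truncation $F \wedge M$ and $M \uparrow \infty$ to handle the case $F(\nu) = +\infty$, produces the lower bound.

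For the \emph{upper bound} I would follow the weak-convergence method of Dupuis-Ellis \cite{dupuis-ellis}. For each $n$, choose $\nu^n \in \P(E^n)$ within $1/n$ of the supremum defining $\rho_n(nF\circ L_n)$. Boundedness of $F$ from above, together with the elementary lower bound $\rho_n(nF \circ L_n) \ge n\inf F + \rho_n(0) \ge n(\inf F - \alpha(\nu_0))$ obtained by using any $\nu_0^{\otimes n}$ with $\alpha(\nu_0) < \infty$ as a trial measure, forces $\frac{1}{n}\alpha_n(\nu^n) \le C$ uniformly in $n$. Introduce the random element of $\P(E)$ defined on $(E^n, \nu^n)$ by
\[
\bar\nu^n(x_1,\ldots,x_n) := \frac{1}{n}\sum_{k=1}^n \nu^n_{k-1,k}(x_1,\ldots,x_{k-1}).
\]
Convexity of $\alpha$ and Jensen give the pointwise estimate $\alpha(\bar\nu^n) \le \frac{1}{n}\sum_k \alpha(\nu^n_{k-1,k})$, which integrated against $\nu^n$ yields $E_{\nu^n}[\alpha(\bar\nu^n)] \le \frac{1}{n}\alpha_n(\nu^n) \le C$. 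The $\psi$-weak pre-compactness hypothesis on sublevel sets of $\alpha$ then gives tightness of the laws of $\bar\nu^n$ in $\P(\P_\psi(E))$.

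The next step is to show $L_n$ and $\bar\nu^n$ are asymptotically equivalent as $\P_\psi(E)$-valued random variables. For $g \in B(E)$, the process $M^n_k := \sum_{j=1}^k (g(X_j) - \int g\,d\nu^n_{j-1,j})$ is a $\nu^n$-martingale with bounded increments, whence $\int g\,dL_n - \int g\,d\bar\nu^n \to 0$ in $L^2(\nu^n)$. Passing to a subsequence along which $\mathrm{Law}_{\nu^n}(\bar\nu^n) \to Q$ in $\P(\P_\psi(E))$, one obtains that the joint laws of $(L_n, \bar\nu^n)$ concentrate on the diagonal, and upper semicontinuity of $F$ (bounded above) together with lower semicontinuity of $\alpha$ gives, via Portmanteau and Fatou,
\[
\limsup_n \left(\int F(L_n)\,d\nu^n - \tfrac{1}{n}\alpha_n(\nu^n)\right) \le \int_{\P_\psi(E)} (F - \alpha)\,dQ \le \sup_{\nu \in \P_\psi(E)}(F(\nu) - \alpha(\nu)).
\]

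The principal obstacle is executing the martingale comparison in the $\psi$-weak rather than the ordinary weak topology. For bounded $g$ the approximation $\int g\,dL_n \approx \int g\,d\bar\nu^n$ is routine, but for continuous $g$ with $|g| \le 1+\psi$ the naive $L^2$ bound fails without $\psi^2$-integrability. I would address this by truncating $g$ at height $M$, using the $L^2$ martingale estimate on the truncated part, and controlling the tail via uniform $\psi$-integrability of the families $\{\bar\nu^n\}$ and $\{L_n\}$: the former is immediate from the tightness in $\P_\psi(E)$, while the latter follows from the identity $E_{\nu^n}[\int \psi\,dL_n] = E_{\nu^n}[\int \psi\,d\bar\nu^n]$ combined with the already-established tightness of $\bar\nu^n$. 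This truncation step is precisely where the $\psi$-weak pre-compactness of $\alpha$'s sublevel sets is indispensable, and it is the technical core of the upper bound.
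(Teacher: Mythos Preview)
Your overall strategy matches the paper's: product measures and the law of large numbers for the lower bound, and the Dupuis--Ellis weak-convergence scheme (near-optimizers, the averaged kernel $\bar\nu^n$, Jensen for $\alpha$, a martingale-difference argument, and passage to the limit) for the upper bound. Two points deserve attention, though.

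First, your uniform bound $\tfrac{1}{n}\alpha_n(\nu^n)\le C$ rests on the inequality $\rho_n(nF\circ L_n)\ge n\inf F+\rho_n(0)$, but in the upper-bound statement $F$ is only bounded \emph{above} and may take the value $-\infty$, so $\inf F$ can be $-\infty$ and this step fails as written. The paper avoids this by first proving the upper bound for \emph{bounded} $F$ and then removing the lower bound via the truncation $F_m=F\vee(-m)$, with a short compactness argument to show $\sup_\nu(F_m(\nu)-\alpha(\nu))\downarrow\sup_\nu(F(\nu)-\alpha(\nu))$. You could alternatively restrict to a subsequence along which $\tfrac{1}{n}\rho_n$ stays bounded below, but some such device is needed.

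Second, you identify the ``principal obstacle'' as extending the martingale comparison to test functions of $\psi$-growth. The paper's route is simpler: it first establishes tightness of the \emph{pair} $(L_n,\bar\nu^n)$ in $\P_\psi(E)\times\P_\psi(E)$, and only then runs the martingale argument for \emph{bounded} $f$ to conclude that every limit concentrates on the diagonal of $\P(E)\times\P(E)$. Since the limit already lives in $\P_\psi(E)^2$, diagonality in $\P(E)^2$ is automatically diagonality in $\P_\psi(E)^2$, and no unbounded test functions are needed. For the tightness of $L_n$ in $\P_\psi(E)$, the paper uses the mean-measure identity $\E\!\int f\,dL_n=\E\!\int f\,d\bar\nu^n$ together with a Jensen inequality $\alpha(\E\bar\nu^n)\le\E[\alpha(\bar\nu^n)]$ to place the mean measures $\E\bar\nu^n$ in a fixed sub-level set of $\alpha$, hence in a $\psi$-weakly pre-compact set; a Markov-inequality construction then produces the required compact sets in $\P_\psi(E)$. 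Your sketch (``uniform $\psi$-integrability of $\{L_n\}$ follows from the identity $\ldots$ and tightness of $\bar\nu^n$'') gestures at this but does not quite supply it: tightness of the \emph{laws} of $\bar\nu^n$ in $\P(\P_\psi(E))$ is not the same as control of $\sup_n\E\!\int_{K^c}\psi\,d\bar\nu^n$, and the Jensen step for $\alpha$ is what bridges the gap.
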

\begin{proof} {\ }

\textbf{Lower bound:} Let us prove first the lower bound.
It is immediate from the definition that $n^{-1}\alpha_n(\nu^n) = \alpha(\nu)$ for each $\nu \in \P(E)$, recalling that $\nu^n$ denotes the $n$-fold product measure.
Thus
\begin{align}
\frac{1}{n}\rho_n(nF(L_n)) &= \sup_{\nu \in \P(E^n)}\left\{\int_{E^n} F \circ L_n \,d\nu - \frac{1}{n}\alpha_n(\nu)\right\} \label{pf:sanov1} \\
	&\ge \sup_{\nu \in \P(E)}\left\{\int_{E^n} F \circ L_n \,d\nu^n - \frac{1}{n}\alpha_n(\nu^n)\right\} \nonumber \\
	&= \sup_{\nu \in \P(E)}\left\{\int_{E^n} F \circ L_n \,d\nu^n - \alpha(\nu)\right\}. \nonumber
\end{align}
For $\nu \in \P(E)$, the law of large numbers (see \cite[Theorem 11.4.1]{dudley2018real}) implies $\nu^n \circ L_n^{-1} \rightarrow \delta_\nu$ weakly, i.e., in $\P(\P(E))$. For $\nu \in \P_{\psi}(E)$, the convergence takes place in $\P(\P_{\psi}(E))$. Lower semicontinuity of $F$ on $\P_\psi(E)$ then implies (e.g., by \cite[Theorem A.3.12]{dupuis-ellis}), for each $\nu \in \P_\psi(E)$,
\begin{align*}
\liminf_{n\rightarrow\infty}\frac{1}{n}\rho_n(nF(L_n)) &\ge \liminf_{n\rightarrow\infty}\int_{E^n} F \circ L_n \,d\nu^n - \alpha(\nu) \\
	&\ge F(\nu) - \alpha(\nu).
\end{align*}
Take the supremum over $\nu$ to complete the proof of the lower bound.

\textbf{Upper bound, $F$ bounded:} 
The upper bound is more involved. First we prove it in four steps under the assumption that $F$ is bounded.

\textbf{Step 1:} 
First we simplify the expression somewhat. For each $\nu \in \P(E^n)$ the definition of $\alpha_n$ and convexity of $\alpha$ imply
\begin{align*}
\frac{1}{n}\alpha_n(\nu) &= \frac{1}{n}\sum_{k=1}^n\int_{E^n}\alpha(\nu_{k-1,k}(x_1,\ldots,x_{k-1}) )\nu(dx_1,\ldots,dx_n) \\
	&\ge \int_{E^n}\alpha\left(\frac{1}{n}\sum_{k=1}^n\nu_{k-1,k}(x_1,\ldots,x_{k-1})\right)\nu(dx_1,\ldots,dx_n).
\end{align*}
Combine this with \eqref{pf:sanov1} to get
\begin{align}
\frac{1}{n}\rho_n(nF(L_n)) &\le \sup_{\nu \in \P(E^n)}\int_{E^n}\left[F(L_n) - \alpha\left(\frac{1}{n}\sum_{k=1}^n\nu_{k-1,k}\right)\right]\,d\nu. \label{pf:sanov1.1}
\end{align}
Now choose arbitrarily some $\mu_f$ such that $\alpha(\mu_f) < \infty$.
The choice $\nu = \mu_f^n$ and boundedness of $F$ show that the supremum in  \eqref{pf:sanov1.1} is bounded below by $-\|F\|_{\infty} - \alpha(\mu_f)$, where $\|F\|_\infty := \sup_{\nu \in \P_\psi(E)}|F(\nu)|$. For each $n$, choose $\nu^{(n)} \in \P(E^n)$ attaining the supremum in \eqref{pf:sanov1.1} to within $1/n$. Then
\begin{align}
\int_{E^n}\alpha\left(\frac{1}{n}\sum_{k=1}^n\nu^{(n)}_{k-1,k}\right)\,d\nu^{(n)} \le 2\|F\|_{\infty} + \alpha(\mu_f) + \frac{1}{n}. \label{pf:sanov1.1.01}
\end{align}
It is convenient to switch now to a probabilistic notation: On some sufficiently rich probability space, find an $E^n$-valued random variable $(Y^n_1,\ldots,Y^n_n)$ with law $\nu^{(n)}$. Define the random measures
\[
S_n := \frac{1}{n}\sum_{k=1}^n\nu^{(n)}_{k-1,k}(Y^n_1,\ldots,Y^n_{k-1}), \quad\quad \widetilde{S}_n := \frac{1}{n}\sum_{k=1}^n\delta_{Y^n_k}.
\]
Use \eqref{pf:sanov1.1} and unwrap the definitions to find
\begin{align}
\frac{1}{n}\rho_n(nF(L_n)) &\le \E[F(\widetilde{S}_n) - \alpha(S_n)] + 1/n. \label{pf:sanov1.1.1}
\end{align}
Moreover, \eqref{pf:sanov1.1.01} implies
\begin{align}
\sup_n\E[\alpha(S_n)] \le 2\|F\|_\infty + \alpha(\mu_f) + 1 < \infty. \label{pf:sanov1.2.02}
\end{align}

\textbf{Step 2:} 
We next show that the sequence $(S_n,\widetilde{S}_n)$ is tight, viewed as $\P_\psi(E)\times\P_\psi(E)$-valued random variables. Here we use the assumption that the sub-level sets of $\alpha$ are $\psi$-weakly compact subsets of $\P_\psi(E)$. It then follows from \eqref{pf:sanov1.2.02} that $(S_n)$ is tight (see, e.g., \cite[Theorem A.3.17]{dupuis-ellis}). 

To see that the pair $(S_n,\widetilde{S}_n)$ is tight, it remains to check that $(\widetilde{S}_n)_n$ is tight. To this end, we first notice that $S_n$ and $\widetilde{S}_n$ have the same mean measure for each $n$, in the sense that for every $f \in B(E)$ we have
\begin{align}
\E\left[\int_Ef\,dS_n\right] &= \E\left[\frac{1}{n}\sum_{k=1}^n\E\left[\left. f(Y^n_k)\right|Y^n_1,\ldots,Y^n_{k-1}\right]\right] =  \E\left[\frac{1}{n}\sum_{k=1}^nf(Y^n_k)\right] = \E\left[\int_Ef\,d\widetilde{S}_n\right]. \label{pf:meanmeasures}
\end{align}
To prove $(\widetilde{S}_n)$ is tight, it suffices (by Prokhorov's theorem) to show that for all $\epsilon > 0$ there exists a $\psi$-weakly compact set $K \subset \P_\psi(E)$ such that $P(\widetilde{S}_n \notin K) \le \epsilon$. We will look for $K$ of the form $K = \cap_{k=1}^\infty\{\nu : \int_{C_k^c}\psi\,d\nu \le 1/k\}$, where $(C_k)_{k=1}^\infty$ a sequence of compact subsets of $E$ to be specified later; indeed, sets $K$ of this form are pre-compact in $\P_\psi(E)$ according to a form of Prokhorov's theorem discussed in Section \ref{se:strongertopologies} (see also \cite[Corollary A.47]{follmer-schied-book}). For such a set $K$, use Markov's inequality and \eqref{pf:meanmeasures} to compute
\begin{align}
P\left(\widetilde{S}_n \notin K \right) &\le \sum_{k=1}^\infty P\left(\int_{C_k^c}\psi\,d\widetilde{S}_n> 1/k\right) \le \sum_{k=1}^\infty k\,\E\int_{C_k^c}\psi\,d\widetilde{S}_n = \sum_{k=1}^\infty k\,\E\int_{C_k^c}\psi\,dS_n. \label{pf:cramer1}
\end{align}
By a form of Jensen's inequality (see Lemma \ref{le:jensen}),
\[
\sup_n\alpha(\E S_n)  \le \sup_n\E[\alpha(S_n)] < \infty,
\]
where $\E S_n$ is the probability measure on $E$ defined by $(\E S_n)(A) = \E[S_n(A)]$.
Hence, the sequence $(\E S_n)$ is pre-compact in $\P_\psi(E)$, thanks to the assumption that sub-level sets of $\alpha$ are pre-compact subsets of $\P_\psi(E)$.
It follows that for every $\epsilon > 0$ there exists a compact set $C \subset E$ such that $\sup_n\E\int_{C^c}\psi\,dS_n \le \epsilon$. With this in mind, we may choose $C_k$ to make \eqref{pf:cramer1} arbitrarily small, uniformly in $n$. This shows that $(\widetilde{S}_n)$ is tight, completing Step 2.

\textbf{Step 3:}
We next show that every limit in distribution of $(S_n,\widetilde{S}_n)$ is concentrated on the diagonal $\{(\nu,\nu) : \nu \in \P_\psi(E)\}$.
By definition of $\nu^{(n)}_{k-1,k}$, we have
\[
\E\left[\left. f(Y^n_k) - \int_E f\,d\nu^{(n)}_{k-1,k}(Y^n_1,\ldots,Y^n_{k-1})\right| Y^n_1,\ldots,Y^n_{k-1}\right] = 0, \text{ for } k=1,\ldots,n
\]
for every  $f \in B(E)$. That is, the terms inside the expectation form a martingale difference sequence. Thus, for $f \in B(E)$, we have
\begin{align}
\E\left[\left(\int_E f\,dS_n - \int_E f\, d\widetilde{S}_n\right)^2 \right]  &= \E\left[\left(\frac{1}{n}\sum_{k=1}^n\left( f(Y^n_k) - \int_E f\,d\nu^{(n)}_{k-1,k}(Y^n_1,\ldots,Y^n_{k-1})\right)\right)^2\right] \nonumber \\
	&= \frac{1}{n^2}\sum_{k=1}^n\E\left[\left( f(Y^n_k) - \int_E f\,d\nu^{(n)}_{k-1,k}(Y^n_1,\ldots,Y^n_{k-1})\right)^2\right] \nonumber  \\
	&\le 2\|f\|_{\infty}^2/n, \label{pf:sanov1.2}
\end{align}
where $\|f\|_\infty := \sup_{x \in E}|f(x)|$.
It is straightforward to check that \eqref{pf:sanov1.2} implies that every weak limit of $(S_n,\widetilde{S}_n)$ is concentrated on (i.e., almost surely belongs to) the diagonal $\{(\nu,\nu) : \nu \in \P(E)\}$ (c.f. \cite[Lemma 2.5.1(b)]{dupuis-ellis}). Indeed, if $(S,\widetilde{S})$ is some $\P_\psi(E) \times \P_\psi(E)$-valued random variable such that $(S_{n_k},\widetilde{S}_{n_k})$ converges in law to $(S,\widetilde{S})$, then \eqref{pf:sanov1.2} implies
\begin{align*}
\E\left[\left(\int_E f\,dS - \int_E f\,d\widetilde{S}\right)^2\right] = 0,
\end{align*}
for each $f \in C_b(E)$, by continuity of the map $\P_\psi(E) \times \P_\psi(E) \ni (\nu,\widetilde{\nu}) \mapsto \left(\int_E f\,d\nu - \int_E f\,d\widetilde{\nu}\right)^2$. Hence, $\int_E f\,dS = \int_E f\,d\widetilde{S}$ a.s.\ for each $f \in C_b(E)$, and arguing with a countable separating family from $C_b(E)$ (see, e.g., \cite[Theorem 6.6]{parthasarathy2005probability}) allows us to deduce that $S=\widetilde{S}$ a.s.

\textbf{Step 4:} We can now complete the proof of the upper bound.
With Step 3 in mind, fix a subsequence and a $\P_\psi(E)$-valued random variable $\eta$ such that $(S_n,\widetilde{S}_n) \rightarrow (\eta,\eta)$ in distribution (where we relabeled the subsequence). Recall that $\alpha$ is bounded from below and $\psi$-weakly lower semicontinuous, whereas $F$ is upper semicontinuous and bounded. Returning to \eqref{pf:sanov1.1.1}, we conclude now that
\begin{align*}
\limsup_{n\rightarrow\infty}\frac{1}{n}\rho_n(nF(L_n)) &\le \limsup_{n\rightarrow\infty}\E\left[F(\widetilde{S}_n) - \alpha(S_n)\right] \\
	&\le \E[F(\eta) - \alpha(\eta)] \\
	&\le \sup_{\nu \in \P_\psi(E)}\left\{F(\nu) - \alpha(\nu)\right\}.
\end{align*}
Of course, we abused notation by relabeling the subsequences, but we have argued that for every subsequence there exists a further subsequence for which this bound holds, which proves the upper bound for $F$ bounded.

\textbf{Upper bound, unbounded $F$:}
With the proof complete for bounded $F$, we now remove the boundedness assumption using a natural truncation procedure. Let $F : \P(E) \rightarrow E \cup \{-\infty\}$ be upper semicontinuous and bounded from above. 
For $m > 0$ let $F_m := F \vee (-m)$. Since $F_m$ is bounded and upper semicontinuous, the previous step yields
\[
\limsup_{n\rightarrow\infty}\frac{1}{n}\rho_n(nF_m(L_n)) \le \sup_{\nu \in \P_\psi(E)}\left\{F_m(\nu) - \alpha(\nu)\right\} =: S_m,
\]
for each $m > 0$. Since $F_m \ge F$, we have
\[
\rho_n(nF_m(L_n)) \ge \rho_n(nF(L_n))
\]
for each $m$, and it remains only to show that
\begin{align}
\lim_{m \rightarrow \infty}S_m = \sup_{\nu \in \P_\psi(E)}\left\{F(\nu) - \alpha(\nu)\right\} =: S. \label{pf:sanov2}
\end{align}
Clearly $S_m \ge S$, since $F_m \ge F$. Note that $S < \infty$, as $F$ and $\alpha$ are bounded from above and from below, respectively. If $S = -\infty$, then $F(\nu) = -\infty$ whenever $\alpha(\nu) < \infty$, and we conclude that, as $m\rightarrow\infty$,
\[
S_m \le -m - \inf_{\nu \in \P(E)}\alpha(\nu) \ \rightarrow \ -\infty = S.
\]
Now suppose instead that $S$ is finite.
Fix $\epsilon > 0$. For each $m > 0$, find $\nu_m \in \P(E)$ such that
\begin{align}
F_m(\nu_m) - \alpha(\nu_m) + \epsilon \ge S_m \ge S. \label{pf:sanov3}
\end{align}
Since $F$ is bounded from above and $S > -\infty$, it follows that $\sup_m\alpha(\nu_m) <\infty$. The sub-level sets of $\alpha$ are $\psi$-weakly compact, and thus the sequence $(\nu_m)$ has a limit point (in $\P_\psi(E)$). Let $\nu_\infty$ denote any limit point, and suppose $\nu_{m_k} \rightarrow \nu_\infty$.
Note that $\inf_m F_m(\nu_m) > -\infty$ in light of \eqref{pf:sanov3}, because $\alpha$ is bounded from below. Hence, for all sufficiently large $m$, we have $F_m(\nu_m)=F(\nu_m)$. Thus
\begin{align*}
\limsup_{k\rightarrow\infty}\left\{F_{m_k}(\nu_{m_k}) - \alpha(\nu_{m_k})\right\} 	&\le F(\nu_\infty) - \alpha(\nu_\infty)\le S,
\end{align*}
where the second inequality follows from upper semicontinuity of $F$ and lower semicontinuity of $\alpha$.
This holds for any limit point of the pre-compact sequence $(\nu_m)$, and it follows from \eqref{pf:sanov3} that
\[
S \le \limsup_{m\rightarrow\infty}S_m \le \limsup_{m\rightarrow\infty}\left\{F_m(\nu_m) - \alpha(\nu_m)\right\} + \epsilon \le S + \epsilon.
\]
Since $\epsilon > 0$ was arbitrary, this proves \eqref{pf:sanov2}.
\end{proof}

\begin{remark}
Several natural choices of $\alpha$ in fact have sub-level sets which are compact in topology induced by bounded \emph{measurable} test functions, i.e., the topology on $\P(E)$ generated by the maps $\nu \mapsto \int_E f\,d\nu$, where $f \in B(E)$. While this topology is stronger than the usual weak convergence topology, the conclusion of Theorem \ref{th:main-sanov} will likely still hold for bounded functions $F$ which are continuous in this stronger (non-metrizable) topology. This is known to be true in the classical case $\alpha(\cdot)=H(\cdot|\mu)$ (see, e.g. \cite[Section 6.2]{dembozeitouni}), where we recall the definition of relative entropy $H$ from \eqref{def:relativeentropy}. For the sake of brevity, we do not pursue this generalization.
\end{remark}

\subsection{Contraction principles and an abstract form of Cram\'er's theorem} \label{se:contraction}

Viewing Theorem \ref{th:main-sanov-extended} as an abstract form of Sanov's theorem, we may derive from it a form of Cram\'er's theorem. The key tool is an analogue of the contraction principle from classical large deviations (c.f.\ \cite[Theorem 4.2.1]{dembozeitouni}).
In its simplest form, if $\varphi : \P(E) \rightarrow E'$ is continuous for some topological space $E'$, then for $F \in C_b(E')$ we have from Theorem \ref{th:main-sanov}
\begin{align*}
\lim_{n\rightarrow\infty}\frac{1}{n}\rho_n(nF \circ \varphi \circ L_n) &= \sup_{\nu \in \P(E)}\left(F(\varphi(\nu)) - \alpha(\nu)\right) = \sup_{x \in E'}\left(F(x) - \alpha_\varphi(x)\right),
\end{align*}
where we define $\alpha_\varphi : E' \rightarrow (-\infty,\infty]$ by
\[
\alpha_\varphi(x) := \inf\left\{\alpha(\nu) : \nu \in \P(E), \ \varphi(\nu) = x\right\}.
\]
This line of reasoning leads to the following extension of Cram\'er's theorem:

\begin{theorem} \label{th:cramer-abstract}
Let $(E,\|\cdot\|)$ be a separable Banach space with continuous dual $E^*$. Define $\Lambda^* : E \rightarrow \R \cup \{\infty\}$ by
\begin{align*}
\Lambda^*(x) = \sup_{x^* \in E^*}\left(\langle x^*,x\rangle - \rho(x^*)\right).
\end{align*}
Define $S_n : E^n \rightarrow E$ by $S_n(x_1,\ldots,x_n) = \frac{1}{n}\sum_{i=1}^nx_i$.
If $F : E \rightarrow \R \cup \{\infty\}$ is lower semicontinuous and bounded from below, then
\begin{align*}
\liminf_{n\rightarrow\infty}\frac{1}{n}\rho_n(nF \circ S_n) \ge \sup_{x \in E}(F(x)-\Lambda^*(x)).
\end{align*}
Suppose also that the sub-level sets of $\alpha$ are pre-compact subsets of $\P_\psi(E)$, for $\psi(x) := \|x\|$. 
If $F : E \rightarrow \R \cup \{-\infty\}$ is upper semicontinuous and bounded from above, then
\begin{align*}
\limsup_{n\rightarrow\infty}\frac{1}{n}\rho_n(nF \circ S_n) \le \sup_{x \in E}(F(x)-\Lambda^*(x)).
\end{align*}
\end{theorem}

The proof makes use of a proposition, interesting in its own right, which generalizes the well-known result that the functions
\[
t \mapsto \log\int_\R e^{tx}\,\mu(dx), \quad\quad \text{ and } \quad\quad t \mapsto \inf\left\{H(\nu | \mu) : \nu \in \P(\R), \ \int_\R x\,\nu(dx) = t\right\},
\]
are convex conjugates of each other (see, e.g., \cite[Lemma 3.3.3]{dupuis-ellis}). 

\begin{proposition} \label{pr:cramer-representation}
Let $(E,\|\cdot\|)$ be a separable Banach space, and let $\psi(x) = \|x\|$. Suppose the sub-level sets of $\alpha$ are pre-compact subsets of $\P_\psi(E)$. Define $\Psi : E \rightarrow \R \cup \{\infty\}$ by
\[
\Psi(x) = \inf\left\{\alpha(\nu) : \nu \in \P_\psi(E), \ \int_Ez\,\nu(dz) = x\right\},
\]
where the integral is in the sense of Bochner. Define $\Psi^*$ on the continuous dual $E^*$ by
\[
\Psi^*(x^*) = \sup_{x \in E}\left(\langle x^*,x\rangle - \Psi(x)\right).
\]
Then $\Psi$ is convex and lower semicontinuous, and $\Psi^*(x^*) = \rho(x^*)$ for every $x^* \in E^*$. In particular,
\begin{align}
\Psi(x) = \sup_{x^* \in E^*}\left(\langle x^*,x\rangle - \rho(x^*)\right). \label{def:cramer-representation}
\end{align}
\end{proposition}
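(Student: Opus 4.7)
The plan is to verify in sequence (i) convexity of $\Psi$, (ii) lower semicontinuity of $\Psi$, (iii) the pointwise identity $\Psi^*(x^*) = \rho(x^*)$ on $E^*$, and then (iv) invoke the Fenchel--Moreau biconjugation theorem to deduce \eqref{def:cramer-representation}. Convexity is immediate: given $x = \int z\,d\nu$ and $y = \int z\,d\mu$ with $\alpha(\nu), \alpha(\mu) < \infty$, the convex combination $\lambda\nu + (1-\lambda)\mu$ lies in $\P_\psi(E)$, has mean $\lambda x + (1-\lambda)y$, and satisfies $\alpha(\lambda\nu + (1-\lambda)\mu) \le \lambda\alpha(\nu) + (1-\lambda)\alpha(\mu)$ by convexity of $\alpha$; taking infima yields convexity of $\Psi$.

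For lower semicontinuity, consider $x_n \to x$ in norm with $\ell := \liminf_n \Psi(x_n) < \infty$. Passing to a subsequence, choose $\nu_n \in \P_\psi(E)$ with $\int z\,d\nu_n = x_n$ and $\alpha(\nu_n) \le \Psi(x_n) + 1/n$, so that $\sup_n \alpha(\nu_n) < \infty$. By the standing hypothesis, $(\nu_n)$ is $\psi$-weakly pre-compact, so a further subsequence converges $\psi$-weakly to some $\nu_\infty \in \P_\psi(E)$. For each $x^* \in E^*$, the map $z \mapsto \langle x^*, z\rangle$ is continuous with $|\langle x^*, z\rangle| \le \|x^*\|\psi(z)$, so $\psi$-weak convergence gives $\langle x^*, \int z\,d\nu_n\rangle \to \langle x^*, \int z\,d\nu_\infty\rangle$; that is, the Bochner means converge weakly in $E$. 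Combined with the norm convergence $x_n \to x$, this forces $\int z\,d\nu_\infty = x$, and $\psi$-weak lower semicontinuity of $\alpha$ yields $\Psi(x) \le \alpha(\nu_\infty) \le \liminf_n \alpha(\nu_n) \le \ell$.

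For (iii), swap the two suprema in the definition of $\Psi^*$:
\[
\Psi^*(x^*) = \sup_{x \in E}\sup_{\substack{\nu \in \P_\psi(E) \\ \int z\,d\nu = x}}\bigl(\langle x^*, x\rangle - \alpha(\nu)\bigr) = \sup_{\nu \in \P_\psi(E)}\left(\int_E \langle x^*, z\rangle\,d\nu - \alpha(\nu)\right).
\]
The right-hand side equals $\rho(x^*)$ in the sense of Definition \ref{def:rho-unbounded}, because any $\nu \in \P(E) \setminus \P_\psi(E)$ has $\alpha(\nu) = \infty$ by the assumption that sub-level sets of $\alpha$ are contained in $\P_\psi(E)$, so enlarging the supremum to all of $\P(E)$ does not change its value.

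Finally, $\Psi$ is proper on $E$: it is not identically $\infty$, since any $\mu$ with $\alpha(\mu) < \infty$ satisfies $\mu \in \P_\psi(E)$ and $\Psi(\int z\,d\mu) \le \alpha(\mu) < \infty$, and it is bounded below because $\alpha$ is (as invoked in the proof of Theorem \ref{th:main-sanov-extended}). The Fenchel--Moreau theorem, applied to the proper convex lower semicontinuous function $\Psi$ on the Banach space $E$ paired with $E^*$, then gives $\Psi = \Psi^{**}$, which is precisely \eqref{def:cramer-representation}. The main technical point is step (ii): one must convert $\psi$-weak convergence of the measures $\nu_n$ into weak convergence of their Bochner means in $E$, and then use the norm convergence of the targets $x_n$ to identify the limit mean.
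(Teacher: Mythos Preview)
Your proof is correct and follows essentially the same route as the paper's: convexity via convex combinations of near-optimal measures, lower semicontinuity via $\psi$-weak compactness of sub-level sets plus lower semicontinuity of $\alpha$, the direct computation $\Psi^* = \rho$ by swapping suprema, and Fenchel--Moreau to close. Your handling of step (ii) is in fact slightly more explicit than the paper's, which simply asserts $\int z\,d\nu_{n_k} \to \int z\,d\nu_\infty$ from $\psi$-weak convergence; you correctly observe that this only gives weak convergence of the Bochner means in $E$, and then use the assumed norm convergence $x_n \to x$ together with uniqueness of weak limits to identify $\int z\,d\nu_\infty = x$.
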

\begin{proof}
We first show that $\Psi$ is convex. Let $t \in (0,1)$ and $x_1,x_2 \in E$. Fix $\epsilon > 0$, and find $\nu_1,\nu_2 \in \P_\psi(E)$ such that $\int_Ez\nu_i(dz)=x_i$ and $\alpha(\nu_i) \le \Psi(x_i) + \epsilon$. Convexity of $\alpha$ yields
\begin{align*}
\Psi(tx_1 + (1-t)x_2) &\le \alpha(t\nu_1+(1-t)\nu_2) \le t\alpha(\nu_1) + (1-t)\alpha(\nu_2) \\
	&\le t\Psi(x_1) + (1-t)\Psi(x_2) + \epsilon.
\end{align*}
To prove that $\Psi$ is lower semicontinuous, first note that $\Psi$ is bounded from below since $\alpha$ is. Let $x_n \rightarrow x$ in $E$, and find $\nu_n \in \P_\psi(E)$ such that $\alpha(\nu_n) \le \Psi(x_n) + 1/n$ and $\int_Ez\nu_n(dz)=x_n$ for each $n$. Fix a subsequence $\{x_{n_k}\}$ such that $\Psi(x_{n_k}) < \infty$ for all $k$ and $\Psi(x_{n_k})$ converges to a finite value (if no such subsequence exists, then there is nothing to prove, as $\Psi(x_n) \rightarrow \infty$). Then $\sup_{k}\alpha(\nu_{n_k}) < \infty$, and because $\alpha$ has $\psi$-weakly compact sub-level sets there exists a further subsequence (again denoted $n_k$) and some $\nu_\infty \in \P_\psi(E)$ such that $\nu_{n_k}\rightarrow\nu_\infty$.
The convergence $\nu_{n_k}\rightarrow\nu_\infty$ in the $\psi$-weak topology implies
\[
x = \lim_{k\rightarrow\infty}x_{n_k} = \lim_{k\rightarrow\infty}\int_Ez\nu_{n_k}(dz) = \int_Ez\,\nu_\infty(dz).
\]
Using lower semicontinuity of $\alpha$ we conclude
\begin{align}
\Psi(x) \le \alpha(\nu_\infty) \le \liminf_{k\rightarrow\infty}\alpha(\nu_{n_k}) \le \liminf_{k\rightarrow\infty}\Psi(x_{n_k}). \label{pf:representation1}
\end{align}
For every sequence $(x_n)$ in $E$ and any subsequence thereof, this argument shows that there exists a further subsequence for which \eqref{pf:representation1} holds, and this proves that $\Psi$ is lower semicontinuous.
Next, compute $\Psi^*$ as follows:
\begin{align*}
\Psi^*(x^*) &= \sup_{x \in E}\left(\langle x^*,x\rangle  - \Psi(x)\right) \\
	&= \sup_{x \in E}\,\sup\left\{\langle x^*,x\rangle - \alpha(\nu) : \nu \in \P_\psi(E), \ \int_Ez\nu(dz)=x\right\} \\
	&= \sup_{\nu \in \P_\psi(E)}\left(\left\langle x^*,\int_Ez\nu(dz)\right\rangle - \alpha(\nu)\right) \\
	&= \sup_{\nu \in \P_\psi(E)}\left(\int_E\langle x^*,z\rangle\nu(dz) - \alpha(\nu)\right) \\
	&= \rho(x^*).
\end{align*}
Indeed, we can take the supremum equivalently over $\P_\psi(E)$ or over $\P(E)$ in the last two steps, thanks to the assumption that $\alpha = \infty$ off of $\P_\psi(E)$ and our convention $\infty-\infty=-\infty$.
Because $\Psi$ is lower semicontinuous and convex, we conclude from the Fenchel-Moreau theorem \cite[Theorem 2.3.3]{zalinescu-book} that it is equal to its biconjugate, which is precisely what \eqref{def:cramer-representation} says.
\end{proof}

\subsection*{Proof of Theorem \ref{th:cramer-abstract}}
The map
\[
\P_\psi(E) \ni \mu \mapsto F\left(\int_Ez\,\mu(dz)\right)
\]
is upper (resp. lower) semicontinuous as soon as $F$ is upper (resp. lower) semicontinuous. The claims then follow from Theorem \ref{th:main-sanov-extended} and Proposition \ref{pr:cramer-representation}. \hfill\qedsymbol

\section{Compactness of sub-level sets of $\alpha$ in $\P_\psi(E)$} \label{se:psi-compact-levelsets}

Several results of the previous section, such as the upper bound of Theorem \ref{th:main-sanov-extended}, operate under the assumption that the sub-level sets of $\alpha$ are pre-compact subsets of $\P_\psi(E)$. This section compiles some related properties of $(\rho,\alpha)$ which will be useful when we encounter specific examples later in the paper.

\subsection{Cram\'er's condition} \label{se:strongcramer}

A first useful result is a condition under which the effective domain of $\alpha$ is contained in $\P_\psi(E)$.

\begin{proposition} \label{pr:weakcramer}
Fix a measurable function $\psi : E \rightarrow \R_+$. Suppose $\rho(\lambda \psi) < \infty$ for some $\lambda > 0$. Then, for each $\nu \in \P(E)$ satisfying $\alpha(\nu) < \infty$, we have $\int \psi\,d\nu < \infty$.
\end{proposition}
\begin{proof}
By definition, for each $\nu \in \P(E)$,
\[
\infty > \rho(\lambda\psi) \ge \lambda\int \psi\,d\nu - \alpha(\nu).
\]
If $\alpha(\nu) < \infty$ then certainly $\int\psi\,d\nu < \infty$.
\end{proof}

The next and more important proposition identifies a condition under which the sub-level sets of $\alpha$ are not only weakly compact (which we always assume) but also $\psi$-weakly compact.

\begin{proposition} \label{pr:strongcramer}
Fix a continuous function $\psi : E \rightarrow \R_+$. Suppose
\begin{align}
\lim_{m\rightarrow\infty}\rho(\lambda\psi 1_{\{\psi \ge m\}}) = \rho(0), \ \forall \lambda > 0. \label{def:strongcramer-condition}
\end{align}
Then, for each $c \in \R$, the weak and $\psi$-weak topologies coincide on $\{\nu \in \P(E) : \alpha(\nu) \le c\} \subset \P_\psi(E)$; in particular, the sub-level sets of $\alpha$ are $\psi$-weakly compact.
\end{proposition}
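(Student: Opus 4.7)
The plan is to show that on a sub-level set $K_c := \{\nu \in \P(E) : \alpha(\nu) \le c\}$ one has (i) $K_c \subset \P_\psi(E)$, (ii) $K_c$ is $\psi$-weakly pre-compact, and (iii) the weak and $\psi$-weak topologies coincide on $K_c$; the last step follows formally from the first two together with the weak compactness of $K_c$ already granted by the standing assumptions.

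\emph{Step 1 ($K_c \subset \P_\psi(E)$).} I first use hypothesis \eqref{def:strongcramer-condition} to show $\rho(\lambda\psi)<\infty$ for every $\lambda>0$. Pointwise, $\lambda\psi \le \lambda m + \lambda\psi\,1_{\{\psi \ge m\}}$, so by monotonicity (R1) and cash-invariance (R2) from Theorem \ref{th:alpha-to-rho},
\[
\rho(\lambda\psi) \le \lambda m + \rho(\lambda\psi\,1_{\{\psi \ge m\}}),
\]
and the right-hand side is finite for $m$ large by \eqref{def:strongcramer-condition}. Proposition \ref{pr:weakcramer} then yields $K_c\subset\P_\psi(E)$.

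\emph{Step 2 (uniform $\psi$-integrability on $K_c$).} The key inequality comes directly from the duality: for any $\nu\in K_c$ and any $\lambda>0$,
\[
\lambda\int_{\{\psi\ge m\}}\psi\,d\nu \;\le\; \rho(\lambda\psi\,1_{\{\psi\ge m\}}) + \alpha(\nu) \;\le\; \rho(\lambda\psi\,1_{\{\psi\ge m\}}) + c,
\]
so taking the supremum over $\nu$ and letting first $m\to\infty$ (using \eqref{def:strongcramer-condition}) and then $\lambda\to\infty$ gives
\[
\lim_{m\to\infty}\sup_{\nu\in K_c}\int_{\{\psi\ge m\}}\psi\,d\nu = 0.
\]

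\emph{Step 3 ($\psi$-weak pre-compactness).} Using the Prohorov-type characterization recalled at the beginning of the section, I want compact $C\subset E$ with $\sup_{\nu\in K_c}\int_{C^c}\psi\,d\nu$ arbitrarily small. Given $\epsilon>0$, pick $m$ by Step 2 so that $\sup_{\nu\in K_c}\int_{\{\psi\ge m\}}\psi\,d\nu \le \epsilon/2$. Since $K_c$ is weakly compact, hence tight, pick a compact $K_0\subset E$ with $\sup_{\nu\in K_c}\nu(K_0^c)\le\epsilon/(2m)$. Continuity of $\psi$ makes $C := K_0\cap\{\psi\le m\}$ compact, and splitting $C^c$ into $K_0^c\cap\{\psi\le m\}$ and $\{\psi>m\}$ yields $\int_{C^c}\psi\,d\nu\le \epsilon$ uniformly in $\nu\in K_c$.

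\emph{Step 4 (coincidence of topologies).} The $\psi$-weak topology is finer than the weak topology, so $K_c$ — being weakly closed as a sub-level set of the (weakly) lower semicontinuous $\alpha$ — is $\psi$-weakly closed, and combined with Step 3 it is $\psi$-weakly compact. The identity map from $(K_c,\psi\text{-weak})$ to $(K_c,\text{weak})$ is a continuous bijection from a compact space onto a Hausdorff space, hence a homeomorphism; this is exactly the claim that the two topologies agree on $K_c$.

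The only step requiring genuine content beyond bookkeeping is Step 2, and it is immediate from the defining inequality of $\rho$ together with hypothesis \eqref{def:strongcramer-condition}; everything else is a standard packaging of compactness and uniform integrability.
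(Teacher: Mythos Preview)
Your proof is correct and hinges on exactly the same key estimate as the paper's: the duality inequality $\lambda\int_{\{\psi\ge m\}}\psi\,d\nu \le \rho(\lambda\psi 1_{\{\psi\ge m\}}) + c$ giving uniform $\psi$-integrability on $K_c$. The paper concludes slightly more directly---uniform $\psi$-integrability already shows $\nu\mapsto\int f\,d\nu$ is weakly continuous on $K_c$ for continuous $f$ with $|f|\le 1+\psi$, so the topologies coincide without the detour through your Steps~3--4---but your Prohorov-plus-compact-to-Hausdorff packaging is equally valid.
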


A first step in the proof comes from the following simple lemma, worth stating separately for emphasis:

\begin{lemma} \label{le:strongcramer-implies-weak}
Fix a continuous function $\psi : E \rightarrow \R_+$. Suppose \eqref{def:strongcramer-condition} holds. 
Then $\rho(\lambda\psi) < \infty$ for every $\lambda \ge 0$. In particular, for each $\nu \in \P(E)$ satisfying $\alpha(\nu) < \infty$, we have $\int\psi\,d\nu < \infty$.
\end{lemma}
\begin{proof}
The second claim is just Proposition \ref{pr:weakcramer}.
For $m,\lambda > 0$ we have  $\lambda\psi \le \lambda m + \lambda\psi1_{\{\psi \ge m\}}$, and thus properties (R1) and (R2) of Theorem \ref{th:alpha-to-rho} imply
\[
\rho(\lambda \psi) \le \lambda m + \rho(\lambda\psi 1_{\{\psi \ge m\}}).
\]
By \eqref{def:strongcramer-condition}, for $m$ sufficiently large the right-hand side is finite.
\end{proof}

\begin{proof}[Proof of Proposition \ref{pr:strongcramer}]
Fix $c \in \R$, and abbreviate $S = \{\nu \in \P(E) : \alpha(\nu) \le c\}$. Assume $S \neq \emptyset$. Note that Lemma \ref{le:strongcramer-implies-weak} implies $S \subset \P_\psi(E)$.
It suffices to prove that the map $\nu \mapsto \int_Ef\,d\nu$ is weakly continuous on $S$ for every continuous $f : E \to \R$ with $|f| \le 1 + \psi$. 
Note that for $\eta_n,\eta \in \P(\R)$ with $\eta_n\to\eta$ weakly we have $\int g\,d\eta_n \to \int g\,d\eta$ for each continuous function $g$ which is uniformly integrable in the sense that
\[
\lim_{m\rightarrow\infty}\sup_n\int_{\{|g| \ge m\}}|g|\,d\eta_n = 0.
\]
(See \cite[Theorem A.3.19]{dupuis-ellis}.) Applying this to the image measures $\{\nu \circ f^{-1} : \nu \in S\}$ for $f$ as above, we find that it suffices
to prove the uniform integrability condition
\[
\lim_{m\rightarrow\infty}\sup_{\nu \in S}\int_{\{\psi \ge m\}}\psi\,d\nu = 0.
\]
By definition of $\rho$, for $m > 0$ and $\nu \in S$,
\begin{align}
\lambda\int_{\{\psi \ge m\}}\psi\,d\nu &\le \rho(\lambda\psi 1_{\{\psi \ge m\}}) + \alpha(\nu) \le \rho(\lambda\psi 1_{\{\psi \ge m\}}) + c, \label{pf:strongcramer1}
\end{align}
Given $\epsilon > 0$, choose $\lambda > 0$ large enough that $(\epsilon + \rho(0) + c)/\lambda \le \epsilon$. Then choose $m$ large enough that $\rho(\lambda\psi 1_{\{\psi \ge m\}}) \le \epsilon + \rho(0)$, which is possible because of assumption \eqref{def:strongcramer-condition}. It then follows from \eqref{pf:strongcramer1} that $\int_{\{\psi \ge m\}}\psi\,d\nu \le \epsilon$, and the proof is complete.
\end{proof}

We refer to \eqref{def:strongcramer-condition} as the \emph{strong Cram\'er condition}.
Several extensions of the classical form of Sanov's theorem to stronger topologies rely on what might be called a ``strong Cram\'er condition.'' For instance, if $\psi : E \rightarrow \R_+$ is continuous, the results of Schied \cite{schied1998cramer} indicate that Sanov's theorem can be extended to the $\psi$-weak topology if (and essentially only if) $\log\int_Ee^{\lambda\psi}\,d\mu < \infty$ for every $\lambda \ge 0$; see also \cite{wang2010sanov,eichelsbacher1996large}.

The form of our strong Cram\'er condition \eqref{def:strongcramer-condition} was heavily inspired by the work of Owari \cite{owari2014maximum} on continuous extensions of monotone convex functionals.
In several cases of interest (namely, Propositions \ref{pr:shortfallcramercondition} and \ref{pr:robustcramercondition} below), it turns out that a converse to Lemma \ref{le:strongcramer-implies-weak} is true, i.e., the strong Cram\'er condition \eqref{def:strongcramer-condition} is equivalent to the statement that $\rho(\lambda\psi) < \infty$ for all $\lambda > 0$. In general, however, the strong Cram\'er condition is the strictly stronger statement.
Consider the following simple example, borrowed from \cite[Example 3.7]{owari2014maximum}: Let $E = \{0,1,\ldots,\}$ be the natural numbers, and define $\mu_n \in \P(E)$ by $\mu_1\{0\}=1$, $\mu_n\{0\} = 1-1/n$, and $\mu_n\{n\} = 1/n$. Let $M$ denote the closed convex hull of $(\mu_n)$. Then $M$ is convex and weakly compact. Define $\alpha(\mu) = 0$ for $\mu \in M$ and $\alpha(\mu)=\infty$ otherwise. Then $\alpha$ satisfies our standing assumptions, and $\rho(f) = \sup_{\mu \in M}\int f\,d\mu = \sup_n\int f\,d\mu_n$. Finally, let $\psi(x)=x$ for $x \in E$. Then $\rho(\lambda\psi) = \lambda < \infty$ because $\int \psi\,d\mu_n = 1$ for all $n$, and similarly $\rho(\lambda \psi1_{\{\psi \ge m\}}) = \lambda$ because $\int\psi1_{\{\psi \ge m\}}\,d\mu_n = 1_{\{n \ge m\}}$. In particular, $\rho(\lambda\psi) < \infty$ for all $\lambda > 0$, but the strong Cram\'er condition fails.

Finally, we remark that it is conceivable that a converse to Proposition \ref{pr:strongcramer} might hold, i.e., that the strong Cram\'er condition \eqref{def:strongcramer-condition} may be \emph{equivalent} to the pre-compactness of the sub-level sets of $\alpha$ in $\P_\psi(E)$. Indeed, the results of Schied \cite[Theorem 2]{schied1998cramer} and Owari \cite[Theorem 3.8]{owari2014maximum} suggest that this may be the case. But this remains an open problem.

\subsection{Implications of $\psi$-weakly compact sub-level sets}
This section contains two results to be used occasionally in the sequel. First is a useful lemma that aid in the computation of $\rho(f)$ for certain unbounded $f$ in Section \ref{se:nonexpLDP}.

\begin{lemma} \label{le:extended-duality2}
If $f : E \to \R$ is upper semicontinuous and bounded from above, then
\begin{align*}
\rho(f) = \lim_{m\to\infty}\rho(f \vee (-m)) = \inf_{m \ge 0}\rho(f \vee (-m)).
\end{align*}
If $f : E \to \R$ is measurable and bounded from below, then
\begin{align*}
\rho(f) = \lim_{m\to\infty}\rho(f \wedge m) = \sup_{m \ge 0}\rho(f \wedge m).
\end{align*}
Lastly, let $\psi : E \rightarrow \R_+$ be continuous. If the sub-level sets of $\alpha$ are pre-compact subsets of $\P_\psi(E)$, and if $f : E \to \R$ is measurable with $f \ge -c(1+\psi)$ pointwise for some $c \ge 0$, then
\begin{align*}
\rho(f) = \lim_{m\to\infty}\rho(f \wedge m) = \sup_{m \ge 0}\rho(f \wedge m).
\end{align*}
\end{lemma}
\begin{proof}
The second claim is a special case of the final claim with $\psi \equiv 0$.
To prove the final claim, note first that $\rho(f \wedge m)$ is non-decreasing in $m$ (see (R1) of Theorem \ref{th:alpha-to-rho}). We find
\begin{align*}
\rho(f) &= \sup_{\nu \in \P_\psi(E)}\left(\int_E f\,d\nu - \alpha(\nu)\right) = \sup_{m \ge 0}\sup_{\nu \in \P_\psi(E)}\left(\int_E f \wedge m\,d\nu - \alpha(\nu)\right) \\
	&= \sup_{m \ge 0}\rho(f \wedge m).
\end{align*}
Indeed, for each $\nu \in \P_\psi(E)$, the monotone convergence theorem applies because $f \wedge m$ for $m \ge 0$ are bounded from below by the $\nu$-integrable function $-c(1+\psi)$.
To prove the first claim, abbreviate $f_m= f \vee (-m)$ for $m \ge 0$.
Monotonicity of $\rho$ implies $\inf_{m \ge 0}\rho(f_m) \ge \rho(f)$, so we need only prove the reverse inequality. Assume without loss of generality that $\inf_{m \ge 0}\rho(f_m) > -\infty$.
For each $n$, we may find for each $n$ some $\nu_n \in \P_\psi(E)$ such that
\begin{align}
-\infty < \inf_{m \ge 0}\rho(f_m)  &\le \rho(f_n) \le \int_Ef_n\,d\nu_n - \alpha(\nu_n) + 1/n. \label{pf:extended-duality1}
\end{align}
This implies $\sup_n\alpha(\nu_n) < \infty$, because $f$ is bounded from above. Pre-compactness of the sub-level sets of $\alpha$ allows us to extract a subsequence ${n_k}$ and $\nu_\infty \in \P(E)$ such that $\nu_{n_k} \rightarrow \nu_\infty$ weakly. 
By Skorohod's representation, we may construct random variables $X_k$ and $X_\infty$ with respective laws $\nu_{n_k}$ and $\nu_\infty$ such that $X_k \rightarrow X_\infty$ a.s. 
The upper semicontinuity assumption implies $\limsup_{k\rightarrow\infty}f_{n_k}(X_k) \le f(X_\infty)$ almost surely. We then conclude from Fatou's lemma that
\[
\limsup_{k\rightarrow\infty}\int_Ef_{n_k}\,d\nu_{n_k} = \limsup_{k\rightarrow\infty}\E[f_{n_k}(X_k)] \le \E[f(X_\infty)] = \int_Ef\,d\nu_\infty.
\]
Since $\alpha$ is weakly lower semicontinuous, we conclude from \eqref{pf:extended-duality1} that
\[
\inf_{m \ge 0}\rho(f_m) \le \int_Ef\,d\nu_\infty - \alpha(\nu_\infty) \le \sup_{\nu \in \P(E)}\left(\int_Ef\,d\nu - \alpha(\nu)\right) = \rho(f).
\]
\end{proof}

\section{Non-exponential large deviations} \label{se:nonexpLDP}

The goal of this section is to prove Theorem \ref{th:lpsanov} and its consequences detailed in Section \ref{se:intro:nonexpLDP}, but along the way we will explore a particularly interesting class of $(\alpha,\rho)$ pairs.

\subsection{Shortfall risk measures} \label{se:shortfall}
Fix $\mu \in \P(E)$ and a nondecreasing, nonconstant, convex function $\ell : \R \rightarrow \R_+$ satisfying $\ell(x) < 1$ for all $x < 0$. Let $\ell^*(y) = \sup_{x \in \R}(xy - \ell(x))$ denote the convex conjugate, and define $\alpha : \P(E) \rightarrow [0,\infty]$ by
\begin{align}
\alpha(\nu) = \begin{cases}
\inf_{t > 0}\frac{1}{t}\left(1 + \int_E\ell^*\left(t\frac{d\nu}{d\mu}\right)\,d\mu\right) &\text{if } \nu \ll \mu \\
\infty &\text{otherwise}.
\end{cases}  \label{def:shortfall-alpha}
\end{align}
Note that $\ell^*(x) \ge - \ell(0) \ge -1$, by assumption and by continuity of $\ell$, so that $\alpha \ge 0$.
Define $\rho$ as usual by \eqref{intro:duality}. It is known \cite[Proposition 4.115]{follmer-schied-book} that, for $f \in B(E)$,
\begin{align}
\rho(f) = \inf\left\{m \in \R : \int_E\ell(f(x)-m)\mu(dx) \le 1\right\}. \label{def:shortfall-rho}
\end{align}
Refer to the book of F\"ollmer and Schied \cite[Section 4.9]{follmer-schied-book} for a thorough study of the properties of $\rho$. Notably, they show that $\rho$ satisfies all of properties (R1--4) of Theorem \ref{th:alpha-to-rho}, and that both dual formulas hold:
\begin{align*}
\rho(f) &= \sup_{\nu \in \P(E)}\left(\int_Ef\,d\nu - \alpha(\nu)\right), \quad\quad
\alpha(\nu) = \sup_{f \in B(E)}\left(\int_Ef\,d\nu - \rho(f)\right).
\end{align*}
If $\ell(x)=e^x$ we recover $\rho(f)=\log\int_Ee^f\,d\mu$ and $\alpha(\nu) = H(\nu | \mu)$. If $\ell(x) = [(1+x)^+]^q$ for some $q \ge 1$, then
\begin{align}
\alpha(\nu) = \|d\nu/d\mu\|_{L^p(\mu)}-1, \text{ for } \nu \ll \mu, \quad\quad \alpha(\nu) = \infty \text{ otherwise}, \label{def:lpentropy}
\end{align}
where $p=q/(q-1)$, and where of course $\|f\|_{L^p(\mu)} = \left(\int |f|^p\,d\mu\right)^{1/p}$; see \cite[Example 4.118]{follmer-schied-book} or \cite[Section 3.1]{lacker-liquidity} for this computation. The $-1$ is a convenient normalization, ensuring that $\alpha(\nu)=0$ if and only if $\nu=\mu$.


We work in the rest of this subsection with $\alpha$ and $\rho$ given as in \eqref{def:shortfall-alpha} and \eqref{def:shortfall-rho}.
The following result shows how the strong Cram\'er condition \eqref{def:strongcramer-condition} simplifies in the present context. It is essentially contained in \cite[Proposition 7.3]{owari2014maximum}, but we include the short proof.

\begin{proposition} \label{pr:shortfallcramercondition}
Let $\psi : E \rightarrow \R_+$ be measurable. Suppose $\int_E\ell(\lambda\psi(x))\mu(dx) < \infty$ for all $\lambda > 0$. Then the strong Cram\'er condition holds, $\lim_{m\rightarrow\infty}\rho(\lambda\psi 1{\{\psi \ge m\}})\rightarrow 0$ for each $\lambda > 0$. In particular, the sub-level sets of $\alpha$ are compact subsets of $\P_\psi(E)$.
\end{proposition}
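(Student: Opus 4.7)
The plan is to verify the strong Cram\'er condition \eqref{def:strongcramer-condition} by a direct calculation with the shortfall representation \eqref{def:shortfall-rho}, after which the pre-compactness conclusion will follow immediately from Proposition~\ref{pr:strongcramer}. Writing $f_m := \lambda \psi 1_{\{\psi \ge m\}}$, note that $f_m \ge 0$ pointwise, so monotonicity (property (R1) of Theorem~\ref{th:alpha-to-rho}) yields $\rho(f_m) \ge \rho(0)$; under the natural normalization $\rho(0) = 0$ (which holds in the present framework since $\alpha$ vanishes at $\mu$), the only substantive task is therefore the upper bound $\limsup_m \rho(f_m) \le 0$.

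My first move is to invoke Lemma~\ref{le:shortfall-Bb} in order to extend the representation \eqref{def:shortfall-rho} to the measurable, bounded-below function $f_m$. The problem then reduces to the following: for each fixed $c > 0$, show that for all sufficiently large $m$,
\[
\int_E \ell(f_m - c)\,d\mu \le 1.
\]
Splitting on the indicator,
\[
\int_E \ell(f_m - c)\,d\mu = \ell(-c)\,\mu\{\psi < m\} + \int_{\{\psi \ge m\}} \ell(\lambda\psi - c)\,d\mu,
\]
and the aim is to drive the right-hand side below $1$ in the limit.

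The anticipated main step is a dominated convergence argument on the second summand: since $\ell$ is nondecreasing and $c > 0$, one has the pointwise bound $\ell(\lambda\psi - c) \le \ell(\lambda\psi)$, and $\ell(\lambda\psi) \in L^1(\mu)$ by hypothesis; since $\psi$ is real-valued, $1_{\{\psi \ge m\}} \downarrow 0$ pointwise, so the second summand vanishes. At the same time the first summand tends to $\ell(-c)$. The standing hypothesis $\ell < 1$ on $(-\infty, 0)$ then supplies $\ell(-c) < 1$, so the total is eventually $\le 1$, yielding $\rho(f_m) \le c$. Sending $c \downarrow 0$ gives $\limsup_m \rho(f_m) \le 0$, which establishes \eqref{def:strongcramer-condition}; the pre-compactness of the sub-level sets of $\alpha$ in $\P_\psi(E)$ is then immediate from Proposition~\ref{pr:strongcramer}.

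The principal subtlety is really just the choice $c > 0$: this is precisely what simultaneously forces $\ell(-c) < 1$ (providing the decisive strict inequality at the end) and licenses $\ell(\lambda\psi)$ as the dominating function in the DCT step. Beyond this, the argument is bookkeeping around the shortfall representation.
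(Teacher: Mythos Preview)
Your proof is correct and follows essentially the same route as the paper's: both invoke Lemma~\ref{le:shortfall-Bb} to access the shortfall representation for the unbounded-but-nonnegative $f_m$, split the integral $\int_E \ell(f_m - c)\,d\mu$ on $\{\psi \ge m\}$, use integrability of $\ell(\lambda\psi)$ to kill the tail term, and then exploit $\ell(-c) < 1$ to conclude $\rho(f_m) \le c$ for large $m$. You are merely a bit more explicit than the paper about the dominated convergence step and about the lower bound $\rho(f_m) \ge \rho(0) = 0$.
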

\begin{proof}
The final claim is simply an application of Proposition \ref{pr:strongcramer}.
Fix $\epsilon > 0$ and $\lambda > 0$. Since $\ell$ is nondecreasing, the following two limits hold:
\begin{align*}
\lim_{m\rightarrow\infty}\mu(\psi < m) = 1, \quad\quad\quad \lim_{m\rightarrow\infty}\int_{\{\psi \ge m\}}\ell\left(\lambda\psi(x) - \epsilon\right)\mu(dx) = 0.
\end{align*}
Since $\ell(-\epsilon) < 1$, it follows that, for sufficiently large $m$,
\begin{align*}
1 &\ge \ell(-\epsilon)\mu(\psi < m) + \int_{\{\psi \ge m\}}\ell\left(\lambda\psi(x) - \epsilon\right)\mu(dx) \\
	&= \int_E\ell\left(\lambda\psi(x)1_{\{\psi \ge m\}}(x) - \epsilon\right)\mu(dx).
\end{align*}
Next, the second assertion of Lemma \ref{le:extended-duality2} implies $\rho(\lambda\psi 1_{\{\psi \ge m\}}) = \sup_{n\ge 0}\rho(n \wedge (\lambda\psi 1_{\{\psi \ge m\}}))$. For each $n$, we use the identity \eqref{def:shortfall-rho} which is valid for bounded $f$ to get, for sufficiently large $m$,
\begin{align*}
\rho(\lambda\psi 1_{\{\psi \ge m\}}) &=  \sup_{n\ge 0}\rho(n \wedge (\lambda\psi 1_{\{\psi \ge m\}})) \\
	&= \sup_{n \ge 0}\inf\left\{c \in \R : \int_E\ell\left(n \wedge (\lambda\psi 1_{\{\psi \ge m\}}) - c\right)\,d\mu \le 1 \right\} \\
	&\le \inf\left\{c \in \R : \int_E\ell\left( \lambda\psi 1_{\{\psi \ge m\}} - c\right)\,d\mu \le 1 \right\} \\
	&\le \epsilon.
\end{align*}
\end{proof}

%

Note that \eqref{def:shortfall-rho} is only valid, a priori, for bounded $f$, although the expression on the right-hand side certainly makes sense for unbounded $f$. The next results provide some useful cases for which the identity \eqref{def:shortfall-rho} carries over to unbounded functions, and these will be needed in the proof of Corollary \ref{co:lpcramer}. In the following, define $\ell(\pm \infty) = \lim_{x \rightarrow \pm\infty}\ell(x)$.

\begin{proposition} \label{pr:shortfall-L1}
Let $\psi : E \rightarrow \R_+$ be continuous, and suppose $\int_E\ell(\lambda\psi(x))\mu(dx) < \infty$ for all $\lambda > 0$. Suppose $f : E \rightarrow \R$ is continuous with $|f| \le c(1+\psi)$ pointwise for some $c \ge 0$. Then the identity \eqref{def:shortfall-rho} holds.
\end{proposition}
\begin{proof}
Let $H(f)$ denote the right-hand side of \eqref{def:shortfall-rho}, well defined for any measurable function $f : E \to \R$. We must show $\rho(f)=H(f)$ for $f$ as in the statement of the proposition. As was mentioned above, it is known from \cite[Proposition 4.115]{follmer-schied-book} that $\rho(f)=H(f)$ for bounded $f$. \\

\noindent\textbf{Step 1.}
Assume first that $f$ is continuous and bounded from above, with $|f| \le c(1+\psi)$. Let $f_n = f \vee (-n)$ for $n \ge 0$. Since $f_n$ is bounded for each $n$, we have $\rho(f_n)=H(f_n)$. The first assertion of Lemma \ref{le:extended-duality2} then implies $\rho(f) = \lim_{n\to\infty}\rho(f_n) = \lim_{n\to\infty}H(f_n)$. It remains to show $H(f_n)\to H(f)$. Clearly $H(f_n) \ge H(f_{n+1}) \ge H(f)$ for each $n$ since $f_n \ge f_{n+1}$ pointwise and $\ell$ is nondecreasing, so the sequence $H(f_n)$ has a limit. As $\ell$ is continuous and strictly increasing in a neighborhood of the origin, note that $H(f)$ is the unique solution $c \in \R$ of the equation
\begin{align}
\int_E\ell(f(x)-c)\mu(dx) = 1. \label{pf:shortfall-L1-11}
\end{align}
Similarly, $H(f_n)$ uniquely solves $\int_E\ell(f_n(x)-H(f_n))\mu(dx) = 1$. Let $c = \lim_{n\to\infty}H(f_n)$, and note that the integrands $\ell(f_n(x)-H(f_n))$ are uniformly bounded and converge pointwise to $\ell(f(x)-c)$. Passing to the limit using dominated convergence shows that $c$ solves the equation \eqref{pf:shortfall-L1-11}, which implies $c=H(f)$. \\

\noindent\textbf{Step 2.}
We now turn to general continuous $f$ satisfying $|f| \le c(1+\psi)$. Define $f_n = f \wedge n$ for $n \ge 0$, so that $f_n$ is bounded from above. By Step 2, $\rho(f_n)=H(f_n)$ for each $n$. By Proposition \ref{pr:shortfallcramercondition}, the sub-level sets of $\alpha$ are $\psi$-weakly compact, and the third assertion of Lemma \ref{le:extended-duality2} yields $\rho(f_n) \to \rho(f)$. It remains to show $H(f_n)\to H(f)$. To see this, note first that $H(f_n) \le H(f_{n+1}) \le H(f)$ for each $n$ since $f_n \le f_{n+1}$ pointwise. Let $\epsilon > 0$ and $c = H(f)-\epsilon$, and note that the definition of $H$ and monotonicity of $\ell$ imply $\int_E\ell(f(x) - c)\mu(dx) > 1$. By monotone convergence, there exists $n$ such that $\int_E\ell(f_n(x) - c)\mu(dx) > 1$. The definition of $H$ now implies $H(f_n) > c = H(f) - \epsilon$. As $\epsilon > 0$ was arbitrary, we conclude $H(f_n)\to H(f)$.
\end{proof}

We record here for later use a simple but useful lemma:
\begin{lemma} \label{le:alphastrictlypositive}
Define $\alpha$ as in \eqref{def:lpentropy}.
Let $\psi : E \rightarrow \R_+$ be continuous, and suppose $\int_E \psi^q\,d\mu < \infty$.
Suppose $A \subset \P_\psi(E)$ is closed (in the $\psi$-weak topology), and $\mu \notin A$. Then $\inf_{\nu \in A}\alpha(\nu) > 0$.
\end{lemma}
\begin{proof}
Recall that $\alpha$ as in \eqref{def:lpentropy} is the special case of \eqref{def:shortfall-alpha} corresponding to $\ell(x) = [(1+x)^+]^q$. Thus Proposition \ref{pr:shortfallcramercondition} and the assumption $\int_E \psi^q\,d\mu < \infty$ ensure that the sub-level sets of $\alpha$ are $\psi$-weakly compact.
If $\inf_{\nu \in A}\alpha(\nu) =0$, we may find $\nu_n \in A$ such that $\alpha(\nu_n) \rightarrow 0$. The sequence $(\nu_n)$ admits a $\psi$-weak limit point $\nu^*$, which must of course belong to the $\psi$-weakly closed set $A$. Lower semicontinuity and nonnegativity of $\alpha$ imply $\alpha(\nu^*) = 0$. This implies $\nu^*=\mu$, as $t \mapsto t^p$ is strictly convex, and this contradicts the assumption that $\mu \notin A$.
\end{proof}

\subsection{Proofs of Theorem \ref{th:lpsanov}, Corollary \ref{co:wasserstein-rate}, and Corollary \ref{co:lpcramer}} \label{se:pf:lpstuff}

With these generalities in hand, we now turn toward the proof of Theorem \ref{th:lpsanov}. The idea is to apply Theorem \ref{th:main-sanov-extended} with $\alpha$ defined as in \eqref{def:lpentropy}. The following estimate is crucial:

\begin{lemma} \label{le:shortfallbound}
Let $q \in (1,\infty]$, and let $p=q/(q-1)$ denote the conjugate exponent. Let $\alpha$ be as in \eqref{def:lpentropy}. Then, for each $n \ge 1$ and $\nu \in \P(E^n)$ with $\nu \ll \mu^n$,
\begin{align}
\alpha_n(\nu) \le n^{1/q}\|d\nu/d\mu^n\|_{L^p(\mu^n)}. \label{pf:lpsanov3}
\end{align}
\end{lemma}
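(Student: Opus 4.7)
The plan is to express $\alpha_n(\nu)$ as a sum of $n$ nonnegative terms that, when raised to the $p$-th power, admit a clean telescoping bound, and then to extract the factor $n^{1/q}$ by a single application of Hölder's inequality on the outer sum. First I would make the disintegration of $\nu$ explicit: set $g := d\nu/d\mu^n$ (available since $\nu \ll \mu^n$) and, for $0 \le k \le n$, let $g_k(x_1,\ldots,x_k) := \int_{E^{n-k}} g(x_1,\ldots,x_n)\,d\mu^{n-k}(x_{k+1},\ldots,x_n)$, with $g_0 \equiv 1$. The kernel $\nu_{k-1,k}(x_1^{k-1})$ then has $\mu$-density $h_k(x_1^{k-1},y) = g_k(x_1^{k-1},y)/g_{k-1}(x_1^{k-1})$, so $\alpha(\nu_{k-1,k}(x_1^{k-1})) = \|h_k(x_1^{k-1},\cdot)\|_{L^p(\mu)} - 1$. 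Absorbing the $y$-independent factor $g_{k-1}(x)$ into the norm (via $\|g_{k-1}(x)\,h_k(x,\cdot)\|_{L^p(\mu)} = \|g_k(x,\cdot)\|_{L^p(\mu)}$) yields
\[
\alpha_n(\nu) = \sum_{k=1}^n (b_k - 1), \qquad b_k := \int_{E^{k-1}} \|g_k(x,\cdot)\|_{L^p(\mu)}\,d\mu^{k-1}(x).
\]

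The next step is the pointwise telescoping inequality $(b_k-1)^p \le \|g_k\|_{L^p(\mu^k)}^p - \|g_{k-1}\|_{L^p(\mu^{k-1})}^p$. Lyapunov's inequality on the probability space $(E,\mu)$ gives $g_{k-1}(x) = \|g_k(x,\cdot)\|_{L^1(\mu)} \le \|g_k(x,\cdot)\|_{L^p(\mu)}$, and the elementary inequality $(A-B)^p \le A^p - B^p$ for $A \ge B \ge 0$, $p \ge 1$ (verified by checking that $A \mapsto A^p - (A-B)^p$ is non-decreasing on $[B,\infty)$) then produces, pointwise in $x$,
\[
\bigl(\|g_k(x,\cdot)\|_{L^p(\mu)} - g_{k-1}(x)\bigr)^p \le \|g_k(x,\cdot)\|_{L^p(\mu)}^p - g_{k-1}(x)^p.
\]
Integrating this against the probability measure $\mu^{k-1}$ and using Jensen's inequality for the convex function $t \mapsto t^p$ to pull the $p$-th power outside the $\mu^{k-1}$-integral in $b_k-1$ yields the claim (Fubini then identifies the right-hand side as a difference of $L^p(\mu^k)$-norms). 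Summing over $k$ telescopes to $\sum_{k=1}^n (b_k-1)^p \le \|g\|_{L^p(\mu^n)}^p - 1 \le \|g\|_{L^p(\mu^n)}^p$.

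Finally, one application of Hölder's inequality on the outer sum gives
\[
\alpha_n(\nu) = \sum_{k=1}^n (b_k-1) \le n^{1/q}\biggl(\sum_{k=1}^n(b_k-1)^p\biggr)^{1/p} \le n^{1/q}\|g\|_{L^p(\mu^n)},
\]
settling the case $p < \infty$. The degenerate case $p = \infty$, $q = 1$ I would handle separately: one has $\|g_k(x,\cdot)\|_{L^\infty(\mu)} \le \|g\|_{L^\infty(\mu^n)}$ for $\mu^{k-1}$-a.e.\ $x$, so $b_k - 1 \le \|g\|_\infty - 1$ and $\alpha_n(\nu) \le n(\|g\|_\infty - 1) \le n\|g\|_{L^\infty(\mu^n)}$. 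I expect the main subtlety to be getting the ordering of operations right: the naive bound $b_k - 1 \le \|g_k\|_{L^p(\mu^k)} - 1 \le \|g\|_{L^p(\mu^n)} - 1$ gives $\alpha_n(\nu) \le n(\|g\|_{L^p(\mu^n)}-1)$, which is much worse than the claimed $n^{1/q}\|g\|_{L^p(\mu^n)}$ when $\|d\nu/d\mu^n\|_{L^p(\mu^n)}$ is large, so one really must raise $b_k - 1$ to the $p$-th power before summing, to exploit the telescoping of the martingale-like quantity $\|g_k\|_{L^p(\mu^k)}^p$.
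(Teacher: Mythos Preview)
Your proof is correct and follows essentially the same approach as the paper: both rely on the same elementary inequality $(A-B)^p \le A^p - B^p$ for $A \ge B \ge 0$ (equivalently, subadditivity of $x \mapsto x^{1/p}$) to produce a telescoping sum of $\|g_k\|_{L^p}^p$, followed by a single Jensen/H\"older step to extract $n^{1/q}$. The only cosmetic differences are that the paper phrases things in martingale notation $M_k = \E^P[d\nu/d\mu^n \mid \F_k]$ rather than your marginal densities $g_k$, applies the Jensen step once over the combined measure on $\{1,\ldots,n\}\times E^n$ rather than per $k$, and handles $p=\infty$ by taking the limit $p\to\infty$ in the finite-$p$ bound rather than directly.
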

\begin{proof}
The case $p=\infty$ and $q=1$ follows by sending $p \rightarrow\infty$ in \eqref{pf:lpsanov3}, so we prove only the case $p < \infty$.
As we will be working with conditional expectations, it is convenient to work with a more probabilistic notation:
Fix $n$, and endow $\Omega = E^n$ with its Borel $\sigma$-field as well as the probability $P = \mu^n$. Let $X_i : E^n \rightarrow E$ denote the natural projections, and let $\F_k = \sigma(X_1,\ldots,X_k)$ denote the natural filtration, for $k=1,\ldots,n$, with $\F_0 := \{\emptyset,\Omega\}$. For $\nu \in \P(E^n)$ and $k=1,\ldots,n$, let $\nu_k$ denote a version of the regular conditional law of $X_k$ given $\F_{k-1}$ under $\nu$, or symbolically $\nu_k := \nu(X_k \in \cdot \, | \, \F_{k-1})$. Let $\E^\nu$ denote integration with respect to $\nu$. Since $P(X_k \in \cdot \, | \, \F_{k-1}) = \mu$ a.s., if $\nu \ll P$ then
\[
\frac{d\nu_k}{d\mu} = \frac{\E^P[d\nu/dP | \F_k]}{\E^P[d\nu/dP | \F_{k-1}]} =: \frac{M_k}{M_{k-1}}, \text{ a.s., where } \frac{0}{0} := 0.
\]
Therefore
\[
\alpha(\nu_k) = \E^P\left[\left.\left(\frac{M_k}{M_{k-1}}\right)^p\right|\F_{k-1}\right]^{1/p}-1.
\]
Note that $(M_k)_{k=0}^n$ is a nonnegative martingale, with $M_0 = 1$ and $M_n = d\nu/dP$. Then
\begin{align*}
\alpha_n(\nu) &= \E^\nu\left[\sum_{k=1}^n\alpha(\nu_k)\right] = \E^P\left[M_n\sum_{k=1}^n\left(\E^P\left[\left.\left(\frac{M_k}{M_{k-1}}\right)^p\right|\F_{k-1}\right]^{1/p}-1\right)\right] \\
	&= \E^P\left[\sum_{k=1}^n\left(\E^P\left[\left.M_k^p \right|\F_{k-1}\right]^{1/p}-M_{k-1}\right)\right].
\end{align*}
Subadditivity of $x \mapsto x^{1/p}$ implies
\[
\left(\E^P[M_k^p|\F_{k-1}]\right)^{1/p} \le \left(\E^P[M_k^p - M_{k-1}^p|\F_{k-1}]\right)^{1/p} + M_{k-1},
\]
where the right-hand side is well-defined because
\[
\E^P[M_k^p | \F_{k-1}] \ge\E^P[M_k | \F_{k-1}]^p = M_{k-1}^p.
\]
Concavity of $x \mapsto x^{1/p}$ and Jensen's inequality yield
\begin{align*}
\alpha_n(\nu) &\le \E^P\left[\sum_{k=1}^n\left(\E^P[M_k^p - M_{k-1}^p|\F_{k-1}]\right)^{1/p}\right] \\
	&\le n^{1-\frac{1}{p}}\left(\E^P\left[\sum_{k=1}^n\E^P[M_k^p - M_{k-1}^p|\F_{k-1}]\right]\right)^{1/p} \\
	&= n^{1/q}\left(\E^P\left[M_n^p - M_0^p\right]\right)^{1/p} \\
	&\le n^{1/q}\left(\E^P\left[M_n^p\right]\right)^{1/p}.
\end{align*}
\end{proof}

\subsection*{Proof of Theorem \ref{th:lpsanov}}
Again, let $q \in (1,\infty)$ and $p=q/(q-1)$, and let $\alpha$ be as in \eqref{def:lpentropy}, noting that it corresponds to \eqref{def:shortfall-alpha} with $\ell(x) = [(1+x)^+]^q$. Then Proposition \ref{pr:shortfallcramercondition} and the assumption that $\int\psi^q\,d\mu < \infty$ imply that the sub-level sets of $\alpha$ are pre-compact subsets of $\P_\psi(E)$. Hence, Theorem \ref{th:main-sanov-extended} applies to the $\psi$-weakly upper semicontinuous function $F : \P_\psi(E) \rightarrow [-\infty,0]$ defined by $F(\nu) = 0$ if $\nu \in A$ and $F(\nu) = -\infty$ otherwise. This yields
\begin{align}
\limsup_{n\rightarrow\infty}\frac{1}{n}\rho_n(nF \circ L_n) \le -\inf_{\nu \in A}\alpha(\nu). \label{pf:lpsanov1.1}
\end{align}
Now use Lemma \ref{le:shortfallbound}, noting that $\frac{1}{n}n^{1/q} = n^{-1/p}$, to get
\begin{align*}
\frac{1}{n}\rho_n(nF\circ L_n) &= \sup_{\nu \in \P(E^n)}\left(\int_{E^n} F\circ L_n\,d\nu- \frac{1}{n}\alpha_n(\nu)\right) \\
	&= -\inf\left\{\frac{1}{n}\alpha_n(\nu) : \nu \in \P(E^n), \ \nu(L_n \in A)=1\right\} \\
	&\ge -\inf\left\{n^{-1/p}\|d\nu/d\mu^n\|_{L^p(\mu^n)}  : \nu \in \P(E^n), \ \nu \ll \mu^n, \ \nu(L_n \in A)=1\right\}.
\end{align*}
Set $B_n = \{x \in E^n : L_n(x) \in A\}$, and define $\nu \ll \mu^n$ by $d\nu/d\mu^n = 1_{B_n}/\mu^n(B_n)$. A quick computation yields
\[
\|d\nu/d\mu^n\|_{L^p(\mu^n)} = \mu^n(B_n)^{(1-p)/p} = \mu^n(B_n)^{-1/q}.
\]
Thus
\[
\frac{1}{n}\rho_n(nF \circ L_n) \ge -\left(n^{1/p}\mu^n(B_n)^{1/q}\right)^{-1}.
\]
Combine this with \eqref{pf:lpsanov1.1} to get
\begin{align*}
\limsup_{n\rightarrow\infty}-\left(n^{1/p}\mu^n(L_n \in A)^{1/q}\right)^{-1} \le -\inf_{\nu \in A}\alpha(\nu).
\end{align*}
Recalling the definition of $\alpha$ from \eqref{def:lpentropy} and noting that $q/p = q-1$, this inequality rewrites as the desired result.
\hfill \qedsymbol

\subsection*{Proof of Corollary \ref{co:wasserstein-rate}}
Define a continuous function $\psi : E \to \R_+$ by $\psi(x) = d^r(x,x_0)$. Note that $\W_r$ then metrizes $\P_\psi(E)$ (see \cite[Theorem 7.12]{villani-book}). Hence, the set
\[
A = \left\{\nu \in \P_\psi(E) : \W_r(\nu,\mu) \ge a\right\}
\]
is closed in $\P_\psi(E)$. Because $\int \psi^{q/r}\,d\mu = \int d^q(x,x_0)\,\mu(dx) < \infty$ by assumption, we may apply Theorem \ref{th:lpsanov} with $q/r$ in place of $q$ to get 
\begin{align*}
\limsup_{n\to\infty} n^{\frac{q}{r}-1}\mu^n\left(\W_r(L_n,\mu) \ge a\right) \le \left(\inf_{\nu \in A} \alpha(\nu)\right)^{-q/r},
\end{align*}
where $\alpha$ is defined as in \eqref{def:lpentropy} with $p=(q/r)/(q/r-1)$. It remains to show that $\inf_{\nu \in A} \alpha(\nu) > 0$. But this follows from Lemma \ref{le:alphastrictlypositive}, since $A$ is closed, $\mu \notin A$, and $\int \psi^{q/r}\,d\mu < \infty$.
\hfill \qedsymbol

\subsection*{Proof of Corollary \ref{co:lpcramer}}
Again, let $\alpha$ be as in \eqref{def:lpentropy}, and note that it corresponds to the shortfall risk measure \eqref{def:shortfall-rho} with $\ell(x) = [(1+x)^+]^q$.
Let $\psi(x) = \|x\|$, and consider the $\P_\psi(E)$-closed set
\[
B = \left\{\mu \in \P_\psi(E) : \int_Ez\,\mu(dz) \in A\right\},
\]
where the integral is defined in the Bochner sense. Proposition \ref{pr:shortfallcramercondition} and the assumption that $\int\psi^q\,d\mu = \E[\|X_1\|^q] < \infty$ imply that the sub-level sets of $\alpha$ are pre-compact subsets of $\P_\psi(E)$.
We may then apply Theorem \ref{th:lpsanov} to get
\[
\limsup_{n\rightarrow\infty}n^{q-1}\PP\left(\frac{1}{n}\sum_{i=1}^nX_i \in A\right) \le \left(\inf_{\nu \in B}\alpha(\nu)\right)^{-q},
\]
where again $\alpha$ is as in \eqref{def:lpentropy}.
It remains to simplify the right-hand side. Proposition \ref{pr:cramer-representation} yields
\[
\sup_{x^* \in E^*}(\langle x^*,x\rangle - \rho(x^*)) = \inf\left\{\alpha(\nu) : \nu \in \P_\psi(E), \ \int_Ez\,\nu(dz)=x\right\}, \text{ for } x \in E.
\]
Infimize over $x \in A$ on both sides to get
\begin{align}
\inf_{\nu \in B}\alpha(\nu) = \inf_{x \in A}\sup_{x^* \in E^*}(\langle x^*,x\rangle - \rho(x^*)). \label{pf:cramer1111}
\end{align}
According to Proposition \ref{pr:shortfall-L1}, for $x^* \in E^*$ we have
\begin{align*}
\rho(x^*) &= \inf\left\{m \in \R : \int_E [(1+x^*(x)-m)^+]^q\mu(dx) \le 1\right\} = \Lambda(x^*),
\end{align*}
where the latter equality is simply the definition of $\Lambda$ given in the statement of Corollary \ref{co:lpcramer}. Hence, the identity \eqref{pf:cramer1111} becomes $\inf_{\nu \in B}\alpha(\nu) = \inf_{x \in A}\Lambda^*(x)$, and the proof is complete.
\hfill \qedsymbol

\subsection{Stochastic optimization with heavy tails} \label{se:optimization}
This section elaborates on the application discussed in Section \ref{se:intro:optimization}, concerning the convergence of Monte-Carlo estimates for stochastic optimization problems. We use here the notation of Section \ref{se:intro:optimization}, and we begin with the proof of Theorem \ref{th:valueconvergence}:

\subsection*{Proof of Theorem \ref{th:valueconvergence}}
Let $A = \{\nu \in \P_\psi(E) : |V(\nu)-V(\mu)| \ge \epsilon\}$. The map
\[
\X \times \P_\psi(E) \ni (x,\nu) \mapsto \int_Eh(x,w)\nu(dx)
\]
is jointly continuous. By Berge's theorem \cite[Theorem 17.31]{aliprantisborder}, $V$ is continuous on $\P_\psi(E)$, and so $A$ is closed.
Theorem \ref{th:lpsanov} implies 
\begin{align*}
\limsup_{n\rightarrow\infty}n^{q-1}\mu^n(|V(L_n)-V(\mu)| \ge \epsilon) &= \limsup_{n\rightarrow\infty}n^{q-1}\mu^n(L_n \in A) \le \left(\inf_{\nu \in A}\alpha(\nu)\right)^{-q}.
\end{align*}
Note that $q/p=q-1$, and finally use Lemma \ref{le:alphastrictlypositive} to conclude $\inf_{\nu \in A}\alpha(\nu) > 0$.
\hfill\qedsymbol

\begin{remark}
The joint continuity and compactness assumptions in Theorem \ref{th:valueconvergence} could likely be weakened, but we focus on the more novel integrability issues to ease the exposition.
\end{remark}

Now that we have shown the optimal value itself converges, we turn next to the convergence of optimizers themselves.

\begin{theorem} \label{th:optimizerconvergence}
Grant the assumptions of Theorem \ref{th:valueconvergence}.
Let $\hat{x} : \P_\psi(E) \rightarrow \X$ be any measurable function satisfying\footnote{Such a function $\hat{x}$ exists because $(x,\nu) \mapsto \int_Eh(x,w)\nu(dw)$ is measurable in $\nu$ and continuous in $x$; see, e.g., \cite[Theorem 18.19]{aliprantisborder}.}
\[
\hat{x}(\nu) \in \arg\min_{x \in \X}\int_Eh(x,w)\nu(dw), \text{ for each } \nu.
\]
Suppose there exist a measurable function $\varphi : \R \rightarrow \R$ and a compatible metric $d$ on $\X$ such that
\[
\varphi(d(\hat{x}(\mu),x)) \le \int_Eh(x,w)\mu(dw) - \int_Eh(\hat{x}(\mu),w)\mu(dw).
\]
Then, for any $\epsilon > 0$,
\[
\limsup_{n\rightarrow\infty}n^{q-1}\mu^n(\varphi(d(\hat{x}(\mu),\hat{x}(L_n))) \ge \epsilon) < \infty.
\]
In particular, if $\varphi$ is strictly increasing with $\varphi(0)=0$,  then for any $\epsilon > 0$,
\[
\limsup_{n\rightarrow\infty}n^{q-1}\mu^n(d(\hat{x}(\mu),\hat{x}(L_n)) \ge \epsilon) < \infty.
\]
\end{theorem}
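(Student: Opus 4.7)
My plan is to reduce the event $\{\varphi(d(\hat{x}(\mu),\hat{x}(L_n))) \ge \epsilon\}$ to a deviation event of the form $\{L_n \in B_\epsilon\}$ for a suitable $\psi$-weakly closed set $B_\epsilon \subset \P_\psi(E)$ with $\mu \notin B_\epsilon$, after which Theorem \ref{th:lpsanov} together with Lemma \ref{le:alphastrictlypositive} delivers the $O(n^{1-q})$ decay.  The hypothesis on $\varphi$, applied with $x = \hat{x}(L_n)$, gives
\[
\varphi(d(\hat{x}(\mu),\hat{x}(L_n))) \le \int_E h(\hat{x}(L_n),w)\mu(dw) - V(\mu),
\]
so on the event of interest the minimizer $\hat{x}(L_n)$ must lie in the $\epsilon$-suboptimal set
\[
A'_\epsilon := \left\{x \in \X : \int_E h(x,w)\mu(dw) \ge V(\mu)+\epsilon\right\}.
\]
Since $h$ is jointly continuous with $h(x,\cdot) \le \psi$ uniformly in $x \in \X$, the map $x \mapsto \int h(x,w)\mu(dw)$ is continuous on the compact space $\X$, so $A'_\epsilon$ is a compact subset of $\X$.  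Because $\hat{x}(L_n) \in \arg\min_{x \in \X}\int h(x,w)L_n(dw)$, the inclusion $\hat{x}(L_n) \in A'_\epsilon$ forces $L_n$ into
\[
B_\epsilon := \left\{\nu \in \P_\psi(E) : \min_{x \in A'_\epsilon}\int_E h(x,w)\nu(dw) = V(\nu)\right\}.
\]

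The main obstacle is to verify that $B_\epsilon$ is $\psi$-weakly closed and that $\mu \notin B_\epsilon$.  For the latter, any $x^* \in A'_\epsilon$ satisfies $\int h(x^*,w)\mu(dw) \ge V(\mu)+\epsilon > V(\mu)$, so no minimizer of $\int h(\cdot,w)\mu(dw)$ lies in $A'_\epsilon$, and hence $\mu \not\in B_\epsilon$.  For closedness, the joint continuity of $(x,\nu) \mapsto \int h(x,w)\nu(dw)$ on $\X \times \P_\psi(E)$ (already invoked in the proof of Theorem \ref{th:valueconvergence}) together with compactness of $A'_\epsilon$ and $\X$ yields, by Berge's maximum theorem, that both $\nu \mapsto \min_{x \in A'_\epsilon}\int h(x,w)\nu(dw)$ and $\nu \mapsto V(\nu)$ are continuous on $\P_\psi(E)$.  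Since $A'_\epsilon \subset \X$ gives $\min_{x \in A'_\epsilon}\int h\,d\nu \ge V(\nu)$ for every $\nu$, the set $B_\epsilon$ is exactly the zero set of the nonnegative continuous function $\nu \mapsto \min_{x \in A'_\epsilon}\int h\,d\nu - V(\nu)$, hence closed.

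Applying Theorem \ref{th:lpsanov} to $B_\epsilon$ and using Lemma \ref{le:alphastrictlypositive} (which is available because the hypothesis $\int\psi^q\,d\mu < \infty$ and the standing assumption place us in the setting of that lemma) yields
\[
\limsup_{n\to\infty} n^{1/p}\mu^n(L_n \in B_\epsilon)^{1/q} \le \Bigl(\inf_{\nu \in B_\epsilon}\alpha(\nu)\Bigr)^{-1} < \infty,
\]
and, since $q/p = q-1$, this gives $\limsup_n n^{q-1}\mu^n(\varphi(d(\hat{x}(\mu),\hat{x}(L_n))) \ge \epsilon) < \infty$, proving the first conclusion.  For the final assertion, if $\varphi$ is strictly increasing with $\varphi(0)=0$, then $\varphi(\epsilon) > 0$ for every $\epsilon > 0$ and $\{d(\hat{x}(\mu),\hat{x}(L_n)) \ge \epsilon\} \subset \{\varphi(d(\hat{x}(\mu),\hat{x}(L_n))) \ge \varphi(\epsilon)\}$, so the previous bound applied with $\varphi(\epsilon)$ in place of $\epsilon$ finishes the proof.
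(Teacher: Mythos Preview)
Your proof is correct and takes a genuinely different route from the paper's. The paper, after applying the hypothesis on $\varphi$ at $x=\hat{x}(L_n)$ just as you do, splits
\[
\int_E h(\hat{x}(L_n),w)\mu(dw)-V(\mu)\ \le\ |V(L_n)-V(\mu)|\;+\;\sup_{x\in\X}\int_E h(x,w)[\mu-L_n](dw),
\]
handles the first summand by invoking Theorem~\ref{th:valueconvergence}, and then controls the second by showing that the event $\{\sup_{x\in\X}\int h(x,w)[\mu-L_n](dw)\ge\epsilon\}$ is of the form $\{L_n\in B\}$ for a $\psi$-weakly closed $B$ not containing $\mu$, after which Lemma~\ref{le:alphastrictlypositive} and Theorem~\ref{th:lpsanov} apply. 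Your argument instead encodes the full event in a single closed set $B_\epsilon$ built from the $\epsilon$-suboptimal set $A'_\epsilon\subset\X$ and applies Theorem~\ref{th:lpsanov} once, bypassing Theorem~\ref{th:valueconvergence} entirely. This is a bit more streamlined; the paper's decomposition, on the other hand, makes the respective roles of value-convergence and uniform-in-$x$ convergence of $\int h(x,\cdot)\,dL_n$ transparent, and leans on an already-proved result of independent interest.

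Two minor remarks. First, your sentence ``$h(x,\cdot)\le\psi$ uniformly in $x$'' is slightly imprecise since $\psi=(\sup_x h(x,\cdot))^+$; the correct bound is $|h(x,w)|\le C+\psi(w)$ using that $h$ is bounded below, which is all you need for the continuity of $x\mapsto\int h(x,w)\mu(dw)$. Second, you should note that if $A'_\epsilon=\emptyset$ the event $\{\hat{x}(L_n)\in A'_\epsilon\}$ is empty and there is nothing to prove; this avoids any awkwardness in defining $\min_{x\in A'_\epsilon}$.
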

\begin{proof}
Note that for $\epsilon > 0$, on the event $\{\varphi(d(\hat{x}(\mu),\hat{x}(L_n))) \ge \epsilon\}$ we have
\begin{align*}
\epsilon &\le \varphi(d(\hat{x}(\mu),\hat{x}(L_n))) \le \int_Eh(\hat{x}(L_n),w)\mu(dw) - \int_Eh(\hat{x}(\mu),w)\mu(dw) \\
	&\le |V(L_n)-V(\mu)| + \sup_{x \in \X}\int_Eh(x,w)[\mu-L_n](dw).
\end{align*}
The first term converges at the right rate, thanks to Theorem \ref{th:valueconvergence}, and it remains  to check that 
\[
\limsup_{n\rightarrow\infty}n^{q-1}\mu^n\left(\sup_{x \in \X}\int_Eh(x,w)[\mu-L_n](dw) \ge \epsilon\right) < \infty.
\]
The map $(x,\nu) \mapsto \int_Eh(x,w)\nu(dw)$ is continuous on $\X \times \P_\psi(E)$, and so the map
\[
\P_\psi(E) \ni \nu \mapsto \sup_{x \in \X}\int_Eh(x,w)[\mu-\nu](dw)
\]
is continuous by Berge's theorem \cite[Theorem 17.31]{aliprantisborder}.
Hence, the set 
\[
B := \left\{\nu \in \P_\psi(E) : \sup_{x \in \X}\int_Eh(x,w)[\mu-\nu](dw) \ge \epsilon\right\}
\]
is closed in $\P_\psi(E)$. Theorem \ref{th:lpsanov} then implies
\begin{align*}
\limsup_{n\rightarrow\infty}n^{q-1}\mu^n\left(\sup_{x \in \X}\int_Eh(x,w)[\mu-L_n](dw) \ge \epsilon\right) &\le \left(\inf_{\nu \in B}\alpha(\nu)\right)^{-q},
\end{align*}
where $\alpha$ is defined as in \eqref{def:lpentropy}.
Finally, Lemma \ref{le:alphastrictlypositive} implies that $\inf_{\nu \in B}\alpha(\nu) > 0$.
\end{proof}

Under the assumption $\int_E\psi^q\,d\mu < \infty$, we see that the value $V(L_n)$ always converges to $V(\mu)$ with the polynomial rate $n^{1-q}$. To see when Theorem \ref{th:optimizerconvergence} applies, notice that in many situations, $\X$ is a convex subset of a normed vector space, and  we have uniform convexity in the following form: There exists a strictly increasing function $\varphi$ such that $\varphi(0)=0$ and, for all $t \in (0,1)$ and $x,y \in \X$,
\begin{align*}
\int_E&h(tx + (1-t)y,w)\mu(dw)  \\
	&\le t\int_Eh(x,w)\mu(dw) + (1-t)\int_Eh(y,w)\mu(dw) - t(1-t)\varphi(\|x-y\|).
\end{align*}
See \cite[pp. 202-203]{kaniovski-king-wets} for more on this.

\section{Uniform large deviations and martingales} \label{se:uniform}

This section returns to the example of Section \ref{se:intro:uniform}. 
We first record a useful abstract theorem of F\"ollmer and Schied \cite{follmer-schied-book}, which will allow us to verify tightness of the sub-level sets of $\alpha$ \emph{before knowing it is convex} by checking a property of $\rho$:

\begin{proposition}[Proposition 4.30 of \cite{follmer-schied-book}]  \label{pr:tight}
Suppose a functional $\rho : B(E) \rightarrow \R$ admits the representation
\[
\rho(f) = \sup_{\nu \in \P(E)}\left(\int_Ef\,d\nu - \alpha(\nu)\right), \text{ for } f \in C_b(E),
\]
for some functional $\alpha : \P(E) \rightarrow (-\infty,\infty]$.
Suppose also that there is a sequence $(K_n)$ of compact subsets of $E$ such that 
\[
\lim_{n\rightarrow\infty}\rho(\lambda 1_{K_n}) = \rho(\lambda), \ \forall \lambda \ge 1.
\]
Then $\alpha$ has tight sub-level sets.
\end{proposition}

Fix a convex weakly compact family of probability measures $M \subset \P(E)$. 
Define
\begin{align}
\alpha(\nu) =  \inf_{\mu \in M}H(\nu | \mu), \label{def:uniformrelativeentropy}
\end{align}
where the relative entropy was defined in \eqref{def:relativeentropy}. In light of the classical formula \cite[Proposition 1.4.2]{dupuis-ellis}
\[
\sup_{\nu \in B(E)}\left(\int_Ef\,d\nu - H(\nu | \mu)\right) = \log \int_E e^f \,d\mu,
\]
the $\rho$ corresponding to the functional $\alpha$ given by \eqref{def:uniformrelativeentropy} is then
\begin{align}
\rho(f) &:= \sup_{\nu \in B(E)}\left(\int_Ef\,d\nu - \alpha(\nu)\right) = \sup_{\nu \in B(E)}\sup_{\mu \in M}\left(\int_Ef\,d\nu - H(\nu | \mu)\right) \nonumber \\
	&= \sup_{\mu \in M}\log\int_Ee^f\,d\mu. \label{def:uniformrelativeentropy-rho}
\end{align}
Let us also take note of the famous Donsker-Varadhan formula \cite[Lemma 1.4.3]{dupuis-ellis}
\begin{align}
H(\nu | \mu) &= \sup_{f \in C_b(E)}\left( \int_Ef\,d\nu - \log\int_Ee^f\,d\mu\right). \label{def:donsker-varadhan} 
\end{align}

\begin{lemma} \label{le:uniform-standingassumptions}
The functional $\alpha$ defined in \eqref{def:uniformrelativeentropy} satisfies the standing assumptions. That is, it is convex and bounded from below, and its sub-level sets are weakly compact.
\end{lemma}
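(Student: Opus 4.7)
The plan is to establish convexity directly from the joint convexity of relative entropy plus convexity of $M$, and then to obtain weakly compact sub-level sets by separately verifying weak lower semicontinuity and tightness.

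For convexity, I would fix $\nu_1,\nu_2 \in \P(E)$ and $t \in (0,1)$, and given $\epsilon>0$ select $\mu_i \in M$ with $H(\nu_i|\mu_i) \le \alpha(\nu_i)+\epsilon$. Since $M$ is convex, $t\mu_1+(1-t)\mu_2 \in M$, so joint convexity of $H$ (see \cite[Lemma 1.4.3]{dupuis-ellis}) gives
\[
\alpha(t\nu_1+(1-t)\nu_2) \le H(t\nu_1+(1-t)\nu_2 \,|\, t\mu_1+(1-t)\mu_2) \le t\alpha(\nu_1)+(1-t)\alpha(\nu_2)+\epsilon.
\]
Letting $\epsilon \downarrow 0$ yields convexity.

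For weak lower semicontinuity, I would invoke the fact that $(\nu,\mu) \mapsto H(\nu|\mu)$ is jointly lower semicontinuous on $\P(E)\times\P(E)$ with the product weak topology. Since $M$ is weakly compact, a standard argument shows that the infimum of a jointly lower semicontinuous function over a compact parameter set is lower semicontinuous: given $\nu_n \to \nu$ with $\liminf_n \alpha(\nu_n) = L$, pick along a subsequence $\mu_n \in M$ with $H(\nu_n|\mu_n) \le \alpha(\nu_n)+1/n$, extract a weak limit $\mu_n \to \mu \in M$ by compactness, and conclude $\alpha(\nu) \le H(\nu|\mu) \le L$ by joint lower semicontinuity. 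Hence the sub-level sets $\{\alpha \le c\}$ are weakly closed.

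For tightness of the sub-level sets, I would apply Theorem \ref{th:tight} of F\"ollmer--Schied. The excerpt already identifies $\rho(f)=\sup_{\mu \in M}\log\int e^f d\mu$, and one checks immediately that $\rho(\lambda)=\lambda$ for constants $\lambda \in \R$. Since $M$ is weakly compact, Prohorov's theorem provides a sequence of compact sets $K_n \subset E$ with $\epsilon_n := \sup_{\mu \in M}\mu(K_n^c) \to 0$. Then for $\lambda \ge 1$,
\[
\rho(\lambda 1_{K_n}) = \sup_{\mu \in M}\log\left(e^\lambda \mu(K_n) + \mu(K_n^c)\right) \in \bigl[\lambda+\log(1-\epsilon_n),\,\log(e^\lambda+\epsilon_n)\bigr],
\]
and both bounds converge to $\lambda = \rho(\lambda)$. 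Theorem \ref{th:tight} then yields tight sub-level sets. Combining weak closedness with tightness gives weak compactness by Prohorov's theorem, completing the argument.

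The only subtle step is verifying the ``infimum of LSC over a compact set is LSC'' claim in the lower semicontinuity argument; this is standard but requires the minimizing sequence extraction sketched above and the joint (rather than separate) lower semicontinuity of $H$.
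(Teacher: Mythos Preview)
Your proof is correct. The tightness step is essentially identical to the paper's, but you establish convexity and lower semicontinuity by direct arguments (joint convexity and joint lower semicontinuity of relative entropy, combined with convexity and compactness of $M$), whereas the paper instead applies Sion's minimax theorem to rewrite
\[
\alpha(\nu) = \inf_{\mu \in M}\sup_{f \in C_b(E)}\Bigl(\int_Ef\,d\nu - \log\int_Ee^f\,d\mu\Bigr) = \sup_{f \in C_b(E)}\Bigl(\int_Ef\,d\nu - \rho(f)\Bigr),
\]
from which convexity and lower semicontinuity are immediate since $\alpha$ is displayed as a supremum of continuous affine functions of $\nu$. Your route is more elementary and self-contained, avoiding the minimax machinery and requiring only standard facts about $H$. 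The paper's route is shorter and has the side benefit of simultaneously verifying the dual identity $\alpha(\nu)=\sup_{f}(\int f\,d\nu-\rho(f))$, which confirms that $\alpha$ is indeed the penalty function associated with $\rho$ in the sense of Theorem~\ref{th:alpha-to-rho}; your argument does not yield this directly, though it is not needed for the lemma as stated.
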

\begin{proof}
Note first that $-\log\int_Ee^f\,d\mu$ is convex and weakly continuous in $\mu$ as well as concave and sup-norm continuous in $f$. Thus, using  \eqref{def:donsker-varadhan} and Sion's minimax theorem \cite{sion1958general}, we find
\begin{align*}
\alpha(\nu) &= \inf_{\mu \in M}\sup_{f \in C_b(E)}\left(\int_Ef\,d\nu - \log\int_Ee^f\,d\mu\right) \\
	&= \sup_{f \in C_b(E)}\inf_{\mu \in  M}\left(\int_Ef\,d\nu - \log\int_Ee^f\,d\mu\right) \\
	&= \sup_{f \in C_b(E)}\left(\int_Ef\,d\nu - \rho(f)\right).
\end{align*}
This shows that $\alpha$ is convex and lower semicontinuous. It remains to prove that $\alpha$ has tight sub-level sets, which will follow from Proposition \ref{pr:tight} once we check the second assumption therein. By Prokhorov's theorem, there exist compact sets $K_1 \subset K_2 \subset \cdots$ such that $\sup_{\mu \in M}\mu(K_n^c) \le 1/n$.
Then, for $\lambda \ge 0$, using the formula for $\rho$ of \eqref{def:uniformrelativeentropy-rho},
\begin{align*}
\lambda \ge \rho(\lambda 1_{K_n}) &= \sup_{ \mu \in M}\log\int_E\exp(\lambda 1_{K_n})\,d\mu \\
	&= \sup_{ \mu \in M}\log\left[(e^\lambda - 1)\mu(K_n) + 1\right] \\
	&\ge \log\left[(e^\lambda - 1)(1-1/n) + 1\right].
\end{align*}
As $n\rightarrow\infty$, the right-hand side converges to $\lambda$, which shows $\rho(\lambda 1_{K_n})\rightarrow \lambda = \rho(\lambda)$.
\end{proof}

To compute $\rho_n$, recall that for $M \subset \P(E)$ we define $M_n$ as the set of $\mu \in \P(E^n)$ satisfying $\mu_{0,1} \in M$ and $\mu_{k-1,k}(x_1,\ldots,x_{k-1}) \in M$ for all $k=2,\ldots,n$ and $x_1,\ldots,x_{n-1} \in E$. (Recall that the conditional measures $\mu_{k-1,k}$ were defined in the introduction.) Notice that $M_1=M$.

\begin{proposition} \label{pr:robustentropic-rhon}
For each $n \ge 1$, $\alpha_n(\nu) = \inf_{\mu \in M_n}H(\nu | \mu)$. Moreover,  for each measurable $f :E^n \rightarrow \R \cup \{-\infty\}$ satisfying $\int_{E^n}e^f\,d\mu < \infty$ for every $\mu \in M_n$,
\begin{align}
\rho_n(f) = \sup_{\mu \in M_n}\log\int_{E^n}e^f\,d\mu. \label{def:uniform-rhon}
\end{align}
\end{proposition}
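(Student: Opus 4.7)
The plan is to first establish the identity $\alpha_n(\nu) = \inf_{\mu \in M_n}H(\nu|\mu)$ and then derive the formula for $\rho_n$ by interchanging suprema and invoking the classical Donsker-Varadhan variational principle. For the bound $\alpha_n(\nu) \le \inf_{\mu \in M_n}H(\nu|\mu)$, fix $\mu \in M_n$; we may assume $H(\nu|\mu) < \infty$, so $\nu \ll \mu$. The chain rule for relative entropy (see \cite[Theorem B.2.1]{dupuis-ellis}) yields
\[
H(\nu|\mu) = \int_{E^n}\sum_{k=1}^n H(\nu_{k-1,k}(x_1,\ldots,x_{k-1})|\mu_{k-1,k}(x_1,\ldots,x_{k-1}))\,\nu(dx_1,\ldots,dx_n),
\]
and since $\mu_{k-1,k}(x_1,\ldots,x_{k-1}) \in M$ for $\mu$-a.e.\ (hence $\nu$-a.e.) $(x_1,\ldots,x_{k-1})$, each summand dominates $\alpha(\nu_{k-1,k}(x_1,\ldots,x_{k-1}))$, giving $H(\nu|\mu) \ge \alpha_n(\nu)$.

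For the reverse inequality, assume $\alpha_n(\nu) < \infty$, so that $\alpha(\nu_{k-1,k}(x_1,\ldots,x_{k-1})) < \infty$ for $\nu$-a.e.\ $(x_1,\ldots,x_{k-1})$ and each $k$. Fix $\epsilon > 0$. Since the map $(\eta,m) \mapsto H(\eta|m)$ is jointly lower semicontinuous on $\P(E) \times \P(E)$ and $M$ is weakly compact and metrizable, the infimum $\alpha(\eta) = \inf_{m \in M}H(\eta|m)$ is attained for each $\eta$, and a standard measurable selection theorem of Kuratowski-Ryll-Nardzewski type provides, for each $k$, a Borel map $\kappa_k : E^{k-1} \to M$ satisfying
\[
H(\nu_{k-1,k}(x_1,\ldots,x_{k-1})|\kappa_k(x_1,\ldots,x_{k-1})) \le \alpha(\nu_{k-1,k}(x_1,\ldots,x_{k-1})) + \epsilon/n
\]
$\nu$-almost surely. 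Defining $\mu \in \P(E^n)$ through the conditionals $\mu_{k-1,k}(x_1,\ldots,x_{k-1}) := \kappa_k(x_1,\ldots,x_{k-1})$ (with any fixed initial law in $M$) produces an element of $M_n$ for which $H(\nu|\mu) \le \alpha_n(\nu) + \epsilon$ by the chain rule, and letting $\epsilon \downarrow 0$ finishes the first identity.

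To derive the formula for $\rho_n$, combine the identity just obtained with the Donsker-Varadhan formula $\log\int_{E^n} e^f\,d\mu = \sup_{\nu \in \P(E^n)}(\int f\,d\nu - H(\nu|\mu))$, valid whenever $\int e^f\,d\mu < \infty$. Swapping the order of the two suprema yields
\begin{align*}
\rho_n(f) &= \sup_{\nu \in \P(E^n)}\left(\int_{E^n} f\,d\nu - \inf_{\mu \in M_n} H(\nu|\mu)\right) \\
	&= \sup_{\mu \in M_n}\sup_{\nu \in \P(E^n)}\left(\int_{E^n} f\,d\nu - H(\nu|\mu)\right) = \sup_{\mu \in M_n}\log\int_{E^n} e^f\,d\mu,
\end{align*}
as desired. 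The main obstacle is the measurable selection step, which requires care because $\nu_{k-1,k}$ is only specified up to $\nu$-null sets; the issue is resolved by fixing a particular Borel version of the disintegration and invoking the cited theorem, whose applicability is guaranteed by the weak compactness of $M$ together with the Borel measurability of the map $(x_1,\ldots,x_{k-1},m) \mapsto H(\nu_{k-1,k}(x_1,\ldots,x_{k-1})|m)$.
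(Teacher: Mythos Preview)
Your proof is correct and takes essentially the same route as the paper's: both use the chain rule for relative entropy together with a measurable selection argument (the paper cites \cite[Proposition 7.50]{bertsekasshreve} to interchange infimum and integral directly, while you invoke a Kuratowski--Ryll-Nardzewski-type selection after noting that compactness of $M$ and lower semicontinuity of $H$ make the infimum attained) to identify $\alpha_n$, and both then derive \eqref{def:uniform-rhon} by swapping suprema and applying the Donsker--Varadhan formula. One small slip worth correcting: the parenthetical ``with any fixed initial law in $M$'' should read ``with initial law $\kappa_1 \in M$,'' since an arbitrary element of $M$ would not furnish the required bound $H(\nu_{0,1}\,|\,\mu_{0,1}) \le \alpha(\nu_{0,1}) + \epsilon/n$ for the $k=1$ term.
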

\begin{proof}
Given the first claim, the second follows from the well-known duality
\[
\sup_{\nu \in \P(E^n)}\left(\int_{E^n}f\,d\nu - H(\nu | \mu)\right) = \log\int_{E^n}e^f\,d\mu,
\]
which holds for $\mu \in \P(E^n)$ as long as $e^f$ is $\mu$-integrable (see, e.g., the proof of \cite[1.4.2]{dupuis-ellis}). Indeed, this implies
\begin{align*}
\rho_n(f) &= \sup_{\nu \in \P(E^n)}\left(\int_{E^n}f\,d\nu - \alpha_n(\nu)\right) = \sup_{\mu \in M_n}\sup_{\nu \in \P(E^n)}\left(\int_{E^n}f\,d\nu - H(\nu | \mu)\right) \\
	&= \sup_{\mu \in M_n}\log\int_{E^n}e^f\,d\mu.
\end{align*}
To prove the first claim, note that by definition
\begin{align*}
\alpha_n(\nu) &= \sum_{k=1}^n \int_{E^n}\inf_{\mu \in M}H(\nu_{k-1,k}(x_1,\ldots,x_{k-1}) \, | \, \mu) \nu(dx_1,\ldots,dx_n).
\end{align*}
For $k=2,\ldots,n$ let $\Y_k$ denote the set of measurable maps from $E^{k-1}$ to $M$, and let $\Y_1 = M$. Then the usual measurable selection argument \cite[Proposition 7.50]{bertsekasshreve} yields
\begin{align*}
\alpha_n(\nu) &= \sum_{k=1}^n \inf_{\eta_k \in \Y_k}\int_{E^n}H(\nu_{k-1,k}(x_1,\ldots,x_{k-1}) \, | \, \eta_k(x_1,\ldots,x_{k-1})) \nu(dx_1,\ldots,dx_n).
\end{align*}
Now, if $(\eta_1,\ldots,\eta_n) \in \prod_{k=1}^n\Y_k$, then the measure
\[
\mu(dx_1,\ldots,dx_n) = \eta_1(dx_1)\prod_{k=2}^n\eta_2(x_1,\ldots,x_{k-1})(dx_k)
\]
is in $M$, and $\mu_{k-1,k} = \eta_k$ is a version of the conditional law. Thus
\begin{align*}
\alpha_n(\nu) &\ge \inf_{\mu \in M}\sum_{k=1}^n\int_{E^n}H(\nu_{k-1,k}(x_1,\ldots,x_{k-1}) \, | \, \mu_{k-1,k}(x_1,\ldots,x_{k-1})) \nu(dx_1,\ldots,dx_n).
\end{align*}
On the other hand, for every $\mu \in M_n$, the vector $(\mu_{0,1},\mu_{1,2},\ldots,\mu_{n-1,n})$ belongs to $\prod_{k=1}^n\Y_k$, and we deduce the opposite inequality. Hence
\begin{align*}
\alpha_n(\nu) &\ge \inf_{\mu \in M}\sum_{k=1}^n\int_{E^n}H(\nu_{k-1,k}(x_1,\ldots,x_{k-1}) \, | \, \mu_{k-1,k}(x_1,\ldots,x_{k-1})) \nu(dx_1,\ldots,dx_n) \\
	&= \inf_{\mu \in M}H(\nu|\mu),
\end{align*}
where the last equality follows from the chain rule for relative entropy \cite[Theorem B.2.1]{dupuis-ellis}.
\end{proof}

Theorem \ref{th:main-sanov-extended} now leads to the following uniform large deviation bound:

\begin{corollary} \label{co:uniform-sanov}
For $F \in C_b(\P(E))$, we have
\begin{align*}
\lim_{n\rightarrow\infty}\sup_{\mu \in M_n}\frac{1}{n}\log\int_{E^n}e^{nF \circ L_n}\,d\mu &= \sup_{\nu \in \P(E), \ \mu \in M}\left(F(\nu) - H(\nu | \mu)\right).
\end{align*}
For closed sets $A \subset \P(E)$, we have
\begin{align*}
\lim_{n\rightarrow\infty}\sup_{\mu \in M_n}\frac{1}{n}\log\mu(L_n \in A) &\le -\inf\left\{H(\nu | \mu) : \nu \in A, \ \mu \in M\right\}.
\end{align*}
\end{corollary}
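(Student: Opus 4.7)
The plan is to deduce both parts of Corollary \ref{co:uniform-sanov} directly from the abstract machinery already developed. Lemma \ref{le:uniform-standingassumptions} certifies that $\alpha(\nu)=\inf_{\mu\in M}H(\nu\,|\,\mu)$ satisfies the standing assumptions, and Proposition \ref{pr:robustentropic-rhon} identifies the associated $\rho_n$ explicitly as $\rho_n(f)=\sup_{\mu\in M_n}\log\int_{E^n}e^f\,d\mu$ whenever $e^f$ is integrable against every $\mu\in M_n$.

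For the first (Laplace-type) identity, I would simply invoke Theorem \ref{th:main-sanov} with the above $\alpha$. Applied to $F\in C_b(\P(E))$, the abstract limit
\[
\frac{1}{n}\rho_n(nF\circ L_n)\ \longrightarrow\ \sup_{\nu\in\P(E)}\bigl(F(\nu)-\alpha(\nu)\bigr)
\]
rewrites, using the explicit formula for $\rho_n$ (valid since $nF\circ L_n$ is bounded), as
\[
\sup_{\mu\in M_n}\frac{1}{n}\log\int_{E^n}e^{nF\circ L_n}\,d\mu \ \longrightarrow\ \sup_{\nu\in\P(E)}\Bigl(F(\nu)-\inf_{\mu\in M}H(\nu\,|\,\mu)\Bigr) = \sup_{\nu\in\P(E),\ \mu\in M}\bigl(F(\nu)-H(\nu\,|\,\mu)\bigr).
\]
This is exactly the claimed first identity.

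For the large deviation upper bound, I formalize the heuristic indicator trick recorded in \eqref{def:convexindicator}. Fix a closed $A\subset\P(E)$ and, for each $k\ge 1$, set $F_k(\nu):=-k\mathbf 1_{A^c}(\nu)$. Since $A^c$ is open, $F_k$ is upper semicontinuous and bounded, so the upper-bound half of Theorem \ref{th:main-sanov-extended} (applied with the trivial weight $\psi\equiv 0$, which reduces the $\psi$-weak topology to the weak topology used in Lemma \ref{le:uniform-standingassumptions}) yields
\[
\limsup_{n\to\infty}\frac{1}{n}\rho_n(nF_k\circ L_n)\ \le\ \sup_{\nu\in\P(E)}\bigl(F_k(\nu)-\alpha(\nu)\bigr).
\]
On the other hand, $e^{nF_k\circ L_n}\ge\mathbf 1_{\{L_n\in A\}}$ pointwise, so Proposition \ref{pr:robustentropic-rhon} gives the lower bound $\rho_n(nF_k\circ L_n)\ge\sup_{\mu\in M_n}\log\mu(L_n\in A)$. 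Combining,
\[
\limsup_{n\to\infty}\sup_{\mu\in M_n}\frac{1}{n}\log\mu(L_n\in A)\ \le\ \max\Bigl(-\inf_{\nu\in A}\alpha(\nu),\ -k-\inf_{\nu\in A^c}\alpha(\nu)\Bigr).
\]
Since $\alpha\ge 0$, letting $k\to\infty$ kills the second term, leaving $-\inf_{\nu\in A}\alpha(\nu)=-\inf\{H(\nu\,|\,\mu):\nu\in A,\,\mu\in M\}$, as desired.

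There is no substantive obstacle here: both parts are essentially bookkeeping on top of Theorem \ref{th:main-sanov-extended}, Lemma \ref{le:uniform-standingassumptions} and Proposition \ref{pr:robustentropic-rhon}. The only point requiring any vigilance is verifying the hypotheses of Proposition \ref{pr:robustentropic-rhon} (integrability of $e^f$ against every $\mu\in M_n$), which is immediate because $nF\circ L_n$ and $nF_k\circ L_n$ are both bounded.
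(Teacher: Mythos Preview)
Your proposal is correct. The first part is identical to the paper's argument. For the second part the paper takes a slightly shorter route: it applies Theorem \ref{th:main-sanov-extended} directly to the $\{0,-\infty\}$-valued indicator $F=0$ on $A$, $F=-\infty$ on $A^c$ (which is upper semicontinuous and bounded above), and then uses Proposition \ref{pr:robustentropic-rhon} to identify $\rho_n(nF\circ L_n)=\sup_{\mu\in M_n}\log\mu(L_n\in A)$ exactly. Your truncation $F_k=-k\mathbf{1}_{A^c}$ is perfectly valid and yields the same bound after sending $k\to\infty$; in effect you are reproducing at the level of this corollary the ``Upper bound, unbounded $F$'' truncation step that is already built into the proof of Theorem \ref{th:main-sanov-extended}. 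The paper's route is marginally cleaner since it exploits that extension directly, while yours has the minor advantage of only invoking the theorem for bounded $F$.
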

\begin{proof}
The first claim is an immediate consequence of Theorem \ref{th:main-sanov-extended} and the calculation of $\rho_n$ in Proposition \ref{pr:robustentropic-rhon}. To prove the second claim, define $F$ on $\P(E)$ by
\begin{align*}
F(\nu) = \begin{cases}
0 &\text{if } \nu \in A, \\
-\infty &\text{otherwise.}
\end{cases}
\end{align*}
Then $F$ is upper semicontinuous and bounded from above. Use Proposition \ref{pr:robustentropic-rhon} to compute
\begin{align*}
\rho_n(nF \circ L_n) &= \sup_{\mu \in M_n}\log\int_{E^n}\exp(nF \circ L_n)\,d\mu = \sup_{\mu \in M_n}\log\mu(L_n \in A).
\end{align*}
The proof is completed by applying Theorem \ref{th:main-sanov-extended} with this function $F$.
\end{proof}

The following proposition simplifies the strong Cram\'er condition \eqref{def:strongcramer-condition} in the present context.

\begin{proposition} \label{pr:robustcramercondition}
Let $\psi : E \rightarrow \R_+$ be measurable. Suppose that for every $\lambda > 0$ we have
\begin{align}
\sup_{\mu \in M}\int_E e^{\lambda\psi}\,d\mu < \infty. \label{ass:pr:robustcramercondition}
\end{align}
Then the strong Cram\'er condition holds, i.e., $\lim_{m\rightarrow\infty}\rho(\lambda\psi 1_{\{\psi \ge m\}})\rightarrow 0$ for all $\lambda > 0$. In particular, the sub-level sets of $\alpha$ are pre-compact subsets of $\P_\psi(E)$.
\end{proposition}
\begin{proof}
Because $e^{\lambda\psi}$ is $\mu$-integrable for each $\mu\in M$ and $\lambda > 0$, Proposition \ref{pr:robustentropic-rhon} implies
\begin{align*}
\rho(\lambda\psi 1_{\{\psi \ge m\}}) &= \sup_{\mu \in M}\log\int_E\exp\left(\lambda\psi 1_{\{\psi \ge m\}}\right)\,d\mu \\
	&\le \sup_{\mu \in M}\log\left( 1 + \int_{\{\psi \ge m\}}e^{\lambda\psi}\,d\mu\right).
\end{align*}
Now note that $1_{\{\psi \ge m\}} \le \frac{\psi}{m} \le \frac{1}{m}e^\psi$ pointwise, and thus the assumption \eqref{ass:pr:robustcramercondition} yields 
\begin{align*}
\lim_{m\to\infty}\sup_{\mu \in M}\int_{\{\psi \ge m\}}e^{\lambda\psi}\,d\mu &\le \lim_{m\to\infty}\sup_{\mu \in M}\frac{1}{m}\int_E e^{(1+\lambda)\psi}\,d\mu = 0.
\end{align*}
\end{proof}

We are finally ready to specialize Corollary \ref{co:uniform-sanov} to prove Theorem \ref{th:azuma}, similarly to how we specialized Theorem \ref{th:lpsanov} to prove Corollary \ref{co:lpcramer} in Section \ref{se:nonexpLDP}.

\subsection*{Proof of Theorem \ref{th:azuma}}
Define
\[
M = \left\{ \mu \in \P(\R^d) : \log\int_{\R^d} e^{\langle y,x\rangle}\mu(dx) \le \varphi(y), \ \forall y \in \R^d\right\}.
\]
We claim that $M$ is weakly compact. Indeed, it is clearly convex, and closedness follows from Fatou's lemma (cf.\ \cite[Theorem A.3.12]{dupuis-ellis}). To prove tightness, let $e_1,\ldots,e_d$ denote the standard basis vectors in $\R^d$. Write $x=(x_1,\ldots,x_d)$ for a generic element of $\R^d$. For each $\mu \in M$ and  $t > 0$, Markov's inequality yields
\begin{align*}
\mu\left\{x\in \R^d :\max_{i=1,\ldots,d}|x_i| >t\right\} &\le \sum_{k=1}^d\left(\mu\{x \in \R^d : x_i > t/2\} +  \mu\{x \in \R^d : -x_i > t/2\}\right) \\
	&\le \sum_{k=1}^de^{-t/2}\int_{\R^d} (e^{x_i} + e^{-x_i})\,\mu(dx) \\
	&\le e^{-t/2}\sum_{k=1}^de^{\varphi(e_i) + \varphi(-e_i)},
\end{align*}
and we deduce that $M$ is tight.
Now define $\psi(x) = \sum_{i=1}^d|x_i|$ and notice that 
\begin{align*}
\sup_{\mu \in M}\int_{\R^d}\exp(\lambda\psi)\,d\mu < \infty, \text{ for all } \lambda \ge 0.
\end{align*}
Proposition \ref{pr:robustcramercondition} then shows that the strong Cram\'er condition holds.
Define a closed set $B \subset \P_\psi(\R^d)$ by $B = \{\nu \in \P_\psi(\R^d) : \int_{\R^d} z\,\nu(dz) \in A\}$, where $A$ was the given closed subset of $\R^d$. Corollary \ref{co:uniform-sanov} yields
\begin{align*}
\limsup_{n\rightarrow\infty}\sup_{\mu \in M_n}\frac{1}{n}\log\mu(L_n \in B) &\le -\inf\left\{\alpha(\nu) : \nu \in \P_\psi(\R^d), \ \int x\,\nu(dx) \in A\right\},
\end{align*}
Now let $(S_0,\ldots,S_n) \in \mathcal{S}_{d,\varphi}$.
The law of $S_1$ belongs to $M$, and the conditional law of $S_k-S_{k-1}$ given $S_1,\ldots,S_{k-1}$ belongs almost surely to $M$, for each $k$, and so the law of $(S_1,S_2-S_1,\ldots,S_n-S_{n-1})$ belongs to $M_n$. Thus
\[
\PP\left(S_n/n \in A\right) \le \sup_{\mu \in M_n}\mu(L_n \in B),
\]
and all that remains is to prove that 
\begin{align*}
\inf\left\{\alpha(\nu) : \nu \in \P_\psi(\R^d), \ \int z\,\nu(dz) \in A\right\} \ge \inf_{x \in A}\varphi^*(x).
\end{align*}
To prove this, it suffices to show $\Psi(x) \ge \varphi^*(x)$ for every $x \in \R^d$, where
\begin{align}
\Psi(x) := \inf\left\{\alpha(\nu) : \nu \in \P_\psi(\R^d), \ \int z\,\nu(dz) =x\right\}. \label{pf:azuma1}
\end{align}
To this end, note that for all $y \in \R^d$
\[
\rho(\langle \cdot,y\rangle) = \sup_{\mu \in M}\log\int_Ee^{\langle z,y\rangle}\mu(dz) \le \varphi(y),
\]
and then use the representation of Proposition \ref{pr:cramer-representation} to get
\begin{align*}
\Psi(x) &= \sup_{y \in \R^d}\left(\langle x,y\rangle - \rho(\langle \cdot,y\rangle)\right) \ge \sup_{y \in \R^d}\left(\langle x,y\rangle - \varphi(y)\right) = \varphi^*(x).
\end{align*}
{\ } \hfill\qedsymbol

\section{Optimal transport and control} \label{se:optimaltransport}

This section discusses the example of Section \ref{se:intro:optimaltransport} in more detail.
Again let $E$ be a Polish space, and fix a lower semicontinuous function $c : E^2 \rightarrow [0,\infty]$ which is not identically equal to $\infty$. Fix $\mu \in \P(E)$, and define
\[
\alpha(\nu) = \inf_{\pi \in \Pi(\mu,\nu)}\int c\,d\pi,
\]
where $\Pi(\mu,\nu)$ is the set of probability measures on $E \times E$ with first marginal $\mu$ and second marginal $\nu$. Assume that $\int_Ec(x,x)\mu(dx) < \infty$; in many practical cases, $c(x,x)=0$ for all $x$, so this is not a restrictive assumption and merely ensures that $\alpha(\mu) < \infty$. Kantorovich duality \cite[Theorem 1.3]{villani-book} shows that
\begin{align*}
\alpha(\nu) &= \sup\left(\int_Ef\,d\nu - \int_Eg\,d\mu : f,g \in C_b(E), \ f(y) - g(x) \le c(x,y) \ \forall x,y\right).
\end{align*}
This immediately shows that $\alpha$ is convex and weakly lower semicontinuous.
The next two lemmas identify, respectively, the dual $\rho$ and the modest conditions that ensure that $\alpha$ has compact sub-level sets.

\begin{lemma}
Given $\alpha$ as above, and defining $\rho$ as usual by \eqref{intro:duality}, we have
\begin{align}
\rho(f) = \int_ER_cf\,d\mu, \text{ for all } f \in B(E), \label{def:rho-optimaltransport}
\end{align}
where $R_cf : E \rightarrow \R$ is defined by
\[
R_cf(x) = \sup_{y \in E}\left(f(y) - c(x,y)\right).
\]
\end{lemma}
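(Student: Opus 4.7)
The natural approach is to substitute the definition of $\alpha$ into \eqref{intro:duality} and reinterpret the resulting variational problem as a supremum over transport plans. Writing out
\[
\rho(f) = \sup_{\nu \in \P(E)}\sup_{\pi \in \Pi(\mu,\nu)}\left(\int_E f\,d\nu - \int_{E\times E}c\,d\pi\right),
\]
and noting that $\int f\,d\nu = \int f(y)\,\pi(dx,dy)$ because $\nu$ is the second marginal of $\pi$, the double supremum collapses into a single supremum over all $\pi \in \P(E\times E)$ whose first marginal is $\mu$. Disintegrating $\pi(dx,dy) = \mu(dx)K(x,dy)$ against $\mu$ rewrites this as
\[
\rho(f) = \sup_{K}\int_E\mu(dx)\int_E(f(y)-c(x,y))\,K(x,dy),
\]
the supremum running over probability kernels $K$ from $E$ into $E$.

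From this reformulation the upper bound $\rho(f) \le \int_E R_cf\,d\mu$ is immediate, simply by applying the pointwise estimate $\int K(x,dy)(f(y)-c(x,y)) \le R_cf(x)$ under $\mu$. The plan for the matching lower bound is to select a kernel concentrated on near-optimizers of $R_cf$. First I would observe that, since $c$ is lower semicontinuous and $f \in B(E)$, the map $(x,y)\mapsto f(y)-c(x,y)$ is Borel, so $R_cf$ is upper semianalytic and in particular universally measurable; the standing assumption $\int_E c(x,x)\mu(dx)<\infty$ together with boundedness of $f$ then ensures $f(x)-c(x,x) \le R_cf(x) \le \|f\|_\infty$, so $\int_E R_cf\,d\mu$ is a well defined element of $(-\infty,\infty]$. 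Next, I would invoke the Jankov--von Neumann theorem (Proposition 7.50 of \cite{bertsekasshreve}, already used in Proposition \ref{pr:robustentropic-rhon}) to produce, for each $\epsilon>0$, a universally measurable map $\sigma_\epsilon : E \to E$ with $f(\sigma_\epsilon(x))-c(x,\sigma_\epsilon(x)) \ge (R_cf(x)-\epsilon)\wedge(1/\epsilon)$ for every $x \in E$. Taking $\pi_\epsilon(dx,dy) := \mu(dx)\delta_{\sigma_\epsilon(x)}(dy)$ gives a probability measure with first marginal $\mu$, and
\[
\rho(f) \ge \int_E\bigl(f(\sigma_\epsilon(x))-c(x,\sigma_\epsilon(x))\bigr)\,\mu(dx),
\]
so sending $\epsilon \downarrow 0$ and using monotone convergence closes the gap.

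The main obstacle I anticipate is the measurability issue in the lower bound: $R_cf$ is only universally measurable, not Borel, and the selector $\sigma_\epsilon$ is likewise only universally measurable, so some care is needed to interpret $\int R_cf\,d\mu$ and to check that $\pi_\epsilon$ genuinely defines a Borel probability measure on $E \times E$ (after completing $\mu$ in the standard way and invoking Lusin-type regularization of $\sigma_\epsilon$). Once these measurability subtleties are handled, the remainder is a routine disintegration plus $\epsilon$-optimal construction.
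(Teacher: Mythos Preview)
Your proposal is correct and follows essentially the same route as the paper: reduce $\rho(f)$ to $\sup_{\pi \in \Pi(\mu)}\int (f(y)-c(x,y))\,\pi(dx,dy)$, get the upper bound by the pointwise estimate, and get the lower bound via a measurable selector concentrated on (near-)optimizers. The only difference is that the paper invokes Proposition 7.50 of \cite{bertsekasshreve} to obtain an \emph{exact} selector $Y$ with $R_cf(x)=f(Y(x))-c(x,Y(x))$ for $\mu$-a.e.\ $x$, whereas you use $\epsilon$-optimal selectors and pass to the limit; your version is in fact the more careful one, since for merely bounded measurable $f$ the supremum need not be attained.
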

\begin{proof}
Note that $R_cf$ is universally measurable (e.g., by \cite[Proposition 7.50]{bertsekasshreve}), so the integral in \eqref{def:rho-optimaltransport} makes sense. Now compute
\begin{align*}
\rho(f) &= \sup_{\nu \in \P(E)}\left(\int_Ef\,d\nu - \alpha(\nu)\right) \\
	&= \sup_{\nu \in \P(E)}\sup_{\pi \in \Pi(\mu,\nu)}\left(\int_Ef\,d\nu - \int_{E^2}c\,d\pi\right) \\
	&= \sup_{\pi \in \Pi(\mu)}\int_{E^2}\left(f(y) - c(x,y)\right)\pi(dx,dy),
\end{align*}
where $\Pi(\mu)$ is the set of $\pi \in \P(E \times E)$ with first marginal $\mu$.
Use the standard measurable selection theorem \cite[Proposition 7.50]{bertsekasshreve} to find a measurable map $Y : E \rightarrow E$ such that $R_cf(x) = f(Y(x)) - c(x,Y(x))$ for $\mu$-a.e. $x$. Then, choosing $\pi(dx,dy) = \mu(dx)\delta_{Y(x)}(dy)$ shows
\[
\rho(f) \ge \int_E\left(f(Y(x))-c(x,Y(x))\right)\mu(dx) = \int_ER_cf\,d\mu.
\]
On the other hand, it is clear that for every $\pi \in \Pi(\mu)$ we have
\[
\int_{E^2}\left(f(y) - c(x,y)\right)\pi(dx,dy) \le \int_{E}\sup_{y \in E}\left(f(y) - c(x,y)\right)\mu(dx) = \int_ER_cf\,d\mu.
\]
\end{proof}

\begin{lemma} \label{le:tightnessfunction}
Suppose that for each compact set $K \subset E$, the function $h_K(y) := \inf_{x \in K}c(x,y)$ has pre-compact sub-level sets.\footnote{In fact, since $c$ is lower semicontinuous, so is $h_K$ (see \cite[Lemma 17.30]{aliprantisborder}). Thus, our assumption is equivalent to requiring $\{y \in E : h_K(y) \le m\}$ to be compact for each $m \ge 0$.}  Then $\alpha$ has compact sub-level sets.
\end{lemma}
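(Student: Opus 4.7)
Since Kantorovich duality already yields the representation
\[
\alpha(\nu) = \sup_{f \in C_b(E)}\left(\int_E f\,d\nu - \rho(f)\right),
\]
the sub-level sets of $\alpha$ are automatically weakly closed, so by Prohorov's theorem it suffices to show they are tight. Fix $m \ge 0$; the plan is to produce, for each $\epsilon > 0$, a compact set $L \subset E$ such that $\nu(L) \ge 1 - \epsilon$ for every $\nu$ with $\alpha(\nu) \le m$.

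The first step is to use tightness of the single measure $\mu$: pick a compact $K \subset E$ with $\mu(K) \ge 1 - \epsilon/2$. For any $\nu$ with $\alpha(\nu) \le m$, choose a near-optimal coupling $\pi \in \Pi(\mu,\nu)$ with $\int c\,d\pi \le m + 1$. The second step is a Markov-type truncation of the cost: for a threshold $M > 0$ to be chosen,
\[
\pi\bigl(\{(x,y) : c(x,y) > M\}\bigr) \le \frac{m+1}{M}.
\]
Combining this with $\pi(K \times E) = \mu(K) \ge 1 - \epsilon/2$, we get
\[
\pi\bigl(\{(x,y) : x \in K,\ c(x,y) \le M\}\bigr) \ge 1 - \epsilon/2 - (m+1)/M.
\]

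The key third step is to observe that on the event $\{x \in K,\ c(x,y) \le M\}$, we have $h_K(y) \le c(x,y) \le M$, so $y$ lies in the set $\{h_K \le M\}$, whose closure $L := \overline{\{h_K \le M\}}$ is compact by the standing hypothesis on $h_K$. Since $\nu$ is the second marginal of $\pi$, this gives
\[
\nu(L) \ge 1 - \epsilon/2 - (m+1)/M.
\]
Choosing $M$ so large that $(m+1)/M \le \epsilon/2$ yields $\nu(L) \ge 1 - \epsilon$ uniformly over $\{\nu : \alpha(\nu) \le m\}$, proving tightness. The main (very minor) subtlety is ensuring a near-optimal coupling exists: since $\int_E c(x,x)\,\mu(dx) < \infty$, the value $\alpha(\nu)$ is finite and the infimum can be approached to within any tolerance, which is all we used.
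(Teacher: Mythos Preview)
Your proof is correct and follows essentially the same route as the paper's: reduce to tightness via the closedness already provided by Kantorovich duality, choose a compact $K$ capturing most of $\mu$, take a near-optimal coupling, apply Markov's inequality to the transport cost, and use that $\{h_K \le M\}$ has compact closure. The only real difference is that the paper disintegrates $\pi$ as $\mu(dx)\pi_x(dy)$ and applies Markov fiberwise, whereas you apply Markov directly to $\pi$ on $E \times E$; your version is marginally cleaner. (A small aside: your closing remark that the existence of a near-optimal coupling relies on $\int c(x,x)\,\mu(dx) < \infty$ is off---it follows immediately from $\alpha(\nu) \le m < \infty$ and the definition of $\alpha$ as an infimum.)
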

\begin{proof}
We already know that $\alpha$ has closed sub-level sets, so we must show only that they are tight.
Fix $\nu \in \P(E)$ such that $\alpha(\nu) < \infty$ (noting that such $\nu$ certainly exist, as $\mu$ is one example). Fix $\epsilon > 0$, and find $\pi \in \Pi(\mu,\nu)$ such that
\begin{align}
\int c\,d\pi \le \alpha(\nu) + \epsilon < \infty. \label{pf:tightnessfunction1}
\end{align}
As finite measures on Polish spaces are tight, we may find a compact set $K \subset E$ such that $\mu(K^c) \le \epsilon$.
Set $K_n := \{y \in E : h_K(y) < n\}$ for each $n$, and note that this set is pre-compact by assumption. Disintegrate $\pi$ by finding a measurable map $E \ni x \mapsto \pi_x \in \P(E)$ such that $\pi(dx,dy) = \mu(dx)\pi_x(dy)$. By Markov's inequality, for each $n > 0$ and each $x \in K$ we have
\begin{align*}
\pi_x(K_n^c) &\le \pi_x\{y \in E : c(x,y) \ge n\} \le \frac{1}{n}\int_E c(x,y)\pi_x(dy).
\end{align*}
Using this and the inequality \eqref{pf:tightnessfunction1} along with the assumption that $c$ is nonnegative,
\begin{align*}
\nu(K_n^c) &= \int_E\mu(dx)\pi_x(K_n^c) \\
	&\le \mu(K^c) + \int_K\mu(dx)\pi_x(K_n^c) \\
	&\le \epsilon + \frac{1}{n}\int_K\mu(dx)\int_E\pi_x(dy) c(x,y) \\
	&\le \epsilon + \frac{1}{n}\int_{E \times E}c\,d\pi \\
	&\le \left(1 + \frac{1}{n}\right)\epsilon + \frac{1}{n}\alpha(\nu).
\end{align*}
As $\epsilon$ was arbitrary, we have $\nu(K_n^c) \le \alpha(\nu)/n$. Thus, for each $m > 0$, the sub-level set $\{\nu \in \P(E) : \alpha(\nu) \le m\}$ is contained in the tight set
\[
\bigcap_{n=1}^\infty\left\{\nu \in \P(E) : \nu(K_n^c) \le m/n\right\}.
\]
\end{proof}

Let us now compute $\rho_n$. It is convenient to work with more probabilistic notation, so let us suppose $(X_i)_{i=1}^\infty$ is a sequence of i.i.d.\ $E$-valued random variables with common law $\mu$, defined on some fixed probability space. For each $n$, let $\Y_n$ denote the set of equivalence classes of a.s. equal $E^n$-valued random variables $(Y_1,\ldots,Y_n)$ where $Y_k$ is $(X_1,\ldots,X_k)$-measurable for each $k=1,\ldots,n$.

\begin{proposition} \label{pr:rhon-optimaltransport}
For each $n \ge 1$ and each $f \in B(E)$,
\[
\rho_n(f) = \sup_{(Y_1,\ldots,Y_n) \in \Y_n}\E\left[f(Y_1,\ldots,Y_n) - \sum_{i=1}^nc(X_i,Y_i)\right].
\]
\end{proposition}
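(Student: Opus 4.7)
I would prove the two inequalities separately, with the easy direction coming from a probabilistic coupling argument and the hard direction coming from the recursive formula \eqref{def:intro:rhon-recursive} combined with iterated measurable selection.

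\emph{Easy direction ($\ge$).}  Fix $(Y_1,\ldots,Y_n) \in \Y_n$ and let $\nu \in \P(E^n)$ denote its joint law.  The key observation is that for each $k$, since $Y_1,\ldots,Y_{k-1}$ are $\sigma(X_1,\ldots,X_{k-1})$-measurable, they are independent of $X_k$, so the conditional law of $(X_k,Y_k)$ given $(Y_1,\ldots,Y_{k-1})$ has first marginal $\mu$ and second marginal $\nu_{k-1,k}(Y_1,\ldots,Y_{k-1})$; that is, it belongs to $\Pi(\mu,\nu_{k-1,k}(Y_1,\ldots,Y_{k-1}))$.  By the definition of $\alpha$ as an infimum of transport costs,
\[
\alpha(\nu_{k-1,k}(Y_1,\ldots,Y_{k-1})) \le \E\bigl[c(X_k,Y_k) \,\big|\, Y_1,\ldots,Y_{k-1}\bigr], \text{ a.s.}
\]
Taking expectations and summing over $k$ gives $\alpha_n(\nu) \le \E\bigl[\sum_{k=1}^n c(X_k,Y_k)\bigr]$.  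Since $\int f\,d\nu = \E[f(Y_1,\ldots,Y_n)]$, the definition of $\rho_n$ yields $\E\bigl[f(Y_1,\ldots,Y_n) - \sum_k c(X_k,Y_k)\bigr] \le \int f\,d\nu - \alpha_n(\nu) \le \rho_n(f)$.

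\emph{Hard direction ($\le$).}  Iterating the recursive identity of Proposition \ref{pr:rhon-iterative} together with the formula $\rho(h) = \int_E R_c h\,d\mu$, define $g_n := f$ and, for $k = n-1,\ldots,0$,
\[
g_k(x_1,\ldots,x_k) := \rho\bigl(g_{k+1}(x_1,\ldots,x_k,\cdot)\bigr) = \int_E \sup_{y_{k+1} \in E}\bigl[g_{k+1}(x_1,\ldots,x_k,y_{k+1}) - c(x_{k+1},y_{k+1})\bigr]\,\mu(dx_{k+1}),
\]
so that $\rho_n(f) = g_0$.  Given $\epsilon > 0$, the Jankov--von Neumann measurable selection theorem \cite[Proposition 7.50]{bertsekasshreve} applied to each pointwise supremum produces a universally measurable map $Y_{k+1}^\star : E^{k+1} \to E$ such that
\[
g_k(x_1,\ldots,x_k) \le \int_E \bigl[g_{k+1}(x_1,\ldots,x_k,Y_{k+1}^\star) - c(x_{k+1},Y_{k+1}^\star)\bigr]\,\mu(dx_{k+1}) + \epsilon/n.
\]
Chaining these bounds from $k=0$ up to $k=n$ and using Fubini yields
\[
\rho_n(f) \le \int_{E^n}\Bigl[f(Y_1^\star,\ldots,Y_n^\star) - \sum_{k=1}^n c(x_k,Y_k^\star)\Bigr]\mu^n(dx_1,\ldots,dx_n) + \epsilon = \E\Bigl[f(Y_1^\star,\ldots,Y_n^\star) - \sum_{k=1}^n c(X_k,Y_k^\star)\Bigr] + \epsilon,
\]
and $(Y_1^\star,\ldots,Y_n^\star) \in \Y_n$ by construction.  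Letting $\epsilon \downarrow 0$ completes the proof.

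\emph{Main obstacle.}  The principal subtlety is that the intermediate $g_k$ are only upper semianalytic, not Borel, so the recursion must be applied in the form of Proposition \ref{pr:rhon-iterative} together with the extension of $\rho$ to such functions from Section \ref{se:extension-unbounded}; relatedly, the selectors $Y_k^\star$ are universally measurable, and the statement implicitly assumes that $\Y_n$ is rich enough to accommodate (or at least approximate) such selectors.  Integrability at each recursion step is under control because $f$ is bounded and $c \ge 0$, so each $g_k$ is bounded from above by $\|f\|_\infty$; any finiteness issue on the lower side can be handled by truncating $c$ and invoking monotone convergence via Lemma \ref{le:extended-duality}.
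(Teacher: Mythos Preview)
Your argument is correct and reaches the same destination, but the route is organized differently from the paper's.  The paper proves the identity by a single induction on $n$ that handles both inequalities at once: assuming the formula for $\rho_n$, it writes $\rho_{n+1}(f)=\rho_n(g)$ with $g(\cdot)=\rho(f(\cdot,\,\cdot))$, applies the inductive hypothesis to $g$, and then reduces everything to an interchange of supremum and conditional expectation (equation~\eqref{pf:jointly-measurable0}), justified via the joint-measurability Lemma~\ref{le:jointmeasurability}.  You instead split the two inequalities.  Your ``$\ge$'' direction, via the direct coupling observation that the conditional law of $(X_k,Y_k)$ given $Y_1,\ldots,Y_{k-1}$ lies in $\Pi(\mu,\nu_{k-1,k})$, is more transparent than what one extracts from the paper's induction and does not use the recursion at all.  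For ``$\le$'' you fully unroll the recursion and apply Jankov--von~Neumann selection at every layer, composing the selectors down to functions of $(X_1,\ldots,X_k)$.  Both approaches confront the same obstacle---the intermediate functions are only upper semianalytic and the selectors only universally measurable---but the paper localizes this to a single lemma and a one-step induction, whereas your version must track regularity through $n$ nested compositions.  The caveat you flag about whether $\Y_n$ accommodates universally measurable selectors is exactly the point the paper addresses with Lemma~\ref{le:jointmeasurability}; either completing the filtration or approximating by Borel selectors $\mu^n$-a.e.\ closes that gap.
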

\begin{proof}
The proof is by induction. Let us first rewrite $\rho$ in our probabilistic notation:
\[
\rho(f) = \E\left[\sup_{y \in E}[f(y)-c(X_1,y)]\right].
\]
Using a standard measurable selection argument \cite[Proposition 7.50]{bertsekasshreve}, we deduce
\[
\rho(f) = \sup_{Y_1 \in \Y_1}\E\left[f(Y_1)-c(X_1,Y_1)\right]
\]
The inductive step proceeds as follows. Suppose we have proven the claim for a given $n$. Fix $f \in B(E^{n+1})$ and define $g \in B(E^n)$ by
\[
g(x_1,\ldots,x_n) := \rho(f(x_1,\ldots,x_n,\cdot)),
\]
so that by Proposition \ref{pr:rhon-iterative} we have $\rho_{n+1}(f)=\rho_n(g)$.
Since $X_1$ and $X_{n+1}$ have the same distribution, we may relabel to find
\begin{align*}
g(x_1,\ldots,x_n) &= \sup_{Y_1 \in \Y_1}\E\left[f(x_1,\ldots,x_n,Y_1)-c(X_1,Y_1)\right] \\
	&= \sup_{Y_{n+1} \in \Y_{n+1}^1}\E\left[f(x_1,\ldots,x_n,Y_{n+1})-c(X_{n+1},Y_{n+1})\right],
\end{align*}
where we define $\Y^1_{n+1}$ to be the set of $X_{n+1}$-measurable $E$-valued random variables. Now note that any $(Y_1,\ldots,Y_n)$ in $\Y_n$ is $(X_1,\ldots,X_n)$-measurable, and independence of $(X_i)_{i=1}^\infty$ implies
\[
g(Y_1,\ldots,Y_n) = \sup_{Y_{n+1} \in \Y_{n+1}^1}\E\left[\left.f(Y_1,\ldots,Y_n,Y_{n+1})-c(X_{n+1},Y_{n+1})\right| Y_1,\ldots,Y_n\right].
\]
We claim that
\begin{align}
\E\left[g(Y_1,\ldots,Y_n)\right] = \sup_{Y_{n+1}}\E\left[f(Y_1,\ldots,Y_n,Y_{n+1}) - c(X_{n+1},Y_{n+1})\right], \label{pf:jointly-measurable0}
\end{align}
where the supremum is over $(X_1,\ldots,X_{n+1})$-measurable $E$-valued random variables $Y_{n+1}$. Indeed, once this is established, we conclude as desired (using Proposition \ref{pr:rhon-iterative}) that
\begin{align*}
\rho_{n+1}(f) &= \rho_n(g) = \sup_{(Y_1,\ldots,Y_n) \in \Y_n}\E\left[g(Y_1,\ldots,Y_n) - \sum_{i=1}^nc(X_i,Y_i)\right] \\
	&= \sup_{(Y_1,\ldots,Y_n) \in \Y_n}\sup_{Y_{n+1}}\E\left[f(Y_1,\ldots,Y_n,Y_{n+1}) - \sum_{i=1}^{n+1}c(X_i,Y_i)\right].
\end{align*}
Hence, the rest of the proof is devoted to justifying \eqref{pf:jointly-measurable0}, which is really an interchange of supremum and expectation.

Note that $\Y_{n+1}^1$ is a Polish space when topologized by convergence in measure. The function $h : E^n \times \Y^1_{n+1} \rightarrow \R$ given by
\[
h(x_1,\ldots,x_n;Y_{n+1}) := \E\left[f(x_1,\ldots,x_n,Y_{n+1})-c(X_{n+1},Y_{n+1})\right].
\]
is jointly measurable. Note as before that independence implies that for every $(Y_1,\ldots,Y_n) \in \Y_n$ and $Y_{n+1} \in \Y^1_{n+1}$ we have, for a.e. $\omega$,
\begin{align}
h(Y_1(\omega),&\ldots,Y_n(\omega);Y_{n+1}) = \E\left[\left. f(Y_1,\ldots,Y_n,Y_{n+1})-c(X_{n+1},Y_{n+1})\right| Y_1,\ldots,Y_n\right](\omega). \label{pf:jointly-measurable1}
\end{align}
Using the usual measurable selection theorem \cite[Proposition 7.50]{bertsekasshreve} we get
\begin{align*}
\E\left[g(Y_1,\ldots,Y_n)\right] &= \E\left[\sup_{Y_{n+1} \in \Y_{n+1}^1}h(Y_1(\cdot),\ldots,Y_n(\cdot);Y_{n+1})\right] \\
	&= \sup_{H \in \widetilde{\Y}_{n+1}^1}\E\left[h(Y_1(\cdot),\ldots,Y_n(\cdot);H(Y_1,\ldots,Y_n))\right],
\end{align*}
where $\widetilde{\Y}_{n+1}^1$ denotes the set of measurable maps $H : E^n \rightarrow \Y^1_{n+1}$. But a measurable map $H : E^n \rightarrow \Y^1_{n+1}$ can be identified almost everywhere with an $(X_1,\ldots,X_{n+1})$-measurable random variable $Y_{n+1}$. Precisely, by Lemma \ref{le:jointmeasurability} (in the appendix) there exists a jointly measurable map $\varphi : E^{n+1} \rightarrow E$ such that, for $\mu^n$-a.e. $(x_1,\ldots,x_n) \in E^n$, we have
\[
\varphi(x_1,\ldots,x_{n+1}) = H(x_1,\ldots,x_n)(x_{n+1}), \text{ for } \mu\text{-a.e. } x_{n+1} \in E.
\]
Define $Y_{n+1} = \varphi(X_1,\ldots,X_{n+1})$, and note that \eqref{pf:jointly-measurable1} implies, for a.e. $\omega$,
\begin{align*}
h(Y_1(\omega),&\ldots,Y_n(\omega);H(Y_1,\ldots,Y_n)) = \E\left[\left.f(Y_1,\ldots,Y_n,Y_{n+1})-c(X_{n+1},Y_{n+1})\right| Y_1,\ldots,Y_n\right](\omega).
\end{align*}
This identification of $\widetilde{\Y}_{n+1}^1$ and the tower property of conditional expectations leads to \eqref{pf:jointly-measurable0}.
\end{proof}

\appendix

\section{A recursive formula for $\rho_n$}  \label{se:rhon}

In this brief section we make rigorous the claim in \eqref{def:intro:rhon-recursive}. To do so requires a brief review of analytic sets, needed only for this section. A subset of a Polish space is \emph{analytic} if it is the image of a Borel subset of another Polish space through a Borel measurable function. A real-valued function $f$ on a Polish space is \emph{upper semianalytic} if $\{f \ge c\}$ is an analytic set for each $c \in \R$. It is well known that every analytic set is universally measurable \cite[Corollary 7.42.1]{bertsekasshreve}, and thus every upper semianalytic function is universally measurable. The defining formula for $\rho_n$ given in \eqref{intro:duality} makes sense even when $f : E^n \rightarrow \R$ is bounded and universally measurable, or in particular when $f$ is upper semianalytic.

\begin{proposition} \label{pr:rhon-iterative}
Let $n > 1$. Suppose $f : E^n \rightarrow \R$ is upper semianalytic. Define $g : E^{n-1} \rightarrow \R$ by
\[
g(x_1,\ldots,x_{n-1}) = \rho\left(f(x_1,\ldots,x_{n-1},\cdot)\right).
\]
Then $g$ is upper semianalytic, and $\rho_n(f) = \rho_{n-1}(g)$.
\end{proposition}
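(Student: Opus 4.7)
The plan is to prove the two claims in sequence, reducing everything to a disintegration of $\nu\in\P(E^n)$ with respect to its marginal on $E^{n-1}$ and the last-step transition kernel.

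For upper semianalyticity of $g$, I would invoke standard analytic set machinery. The map $(x,\mu)\mapsto\int_E f(x,y)\,\mu(dy)$ on $E^{n-1}\times\P(E)$ is upper semianalytic because $f$ is (Bertsekas--Shreve, Proposition~7.48), and $\alpha$ is Borel (in fact, lower semicontinuous), so $(x,\mu)\mapsto\int f(x,y)\,\mu(dy)-\alpha(\mu)$ is upper semianalytic. Projecting out the Borel space $\P(E)$ via a supremum preserves upper semianalyticity (Bertsekas--Shreve, Proposition~7.47), so $g$ is upper semianalytic. Boundedness of $g$ is immediate from properties (R1) and (R2): $|g|\le\|f\|_\infty+|\rho(0)|$.

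For the identity $\rho_n(f)=\rho_{n-1}(g)$, write any $\nu\in\P(E^n)$ as $\nu=\widetilde\nu\otimes\kappa$, where $\widetilde\nu\in\P(E^{n-1})$ is the first marginal and $\kappa=\nu_{n-1,n}:E^{n-1}\to\P(E)$ is the Borel last-step kernel. Since the kernels $\nu_{k-1,k}$ for $k\le n-1$ depend only on $(x_1,\ldots,x_{k-1})$ and coincide $\widetilde\nu$-a.s.\ with those of $\widetilde\nu$, Fubini gives the decomposition
\[
\alpha_n(\nu)=\alpha_{n-1}(\widetilde\nu)+\int_{E^{n-1}}\alpha(\kappa(x))\,\widetilde\nu(dx),\qquad \int_{E^n}f\,d\nu=\int_{E^{n-1}}\!\!\left[\int_E f(x,y)\,\kappa(x)(dy)\right]\widetilde\nu(dx).
\]
Substituting into the definition of $\rho_n(f)$ and parametrizing $\nu$ by $(\widetilde\nu,\kappa)$,
\[
\rho_n(f)=\sup_{\widetilde\nu}\left\{\sup_{\kappa}\int_{E^{n-1}}\!\!\left[\int f(x,\cdot)\,d\kappa(x)-\alpha(\kappa(x))\right]\widetilde\nu(dx)\;-\;\alpha_{n-1}(\widetilde\nu)\right\}.
\]
The pointwise bound $\int f(x,\cdot)\,d\kappa(x)-\alpha(\kappa(x))\le g(x)$ immediately yields $\rho_n(f)\le\rho_{n-1}(g)$.

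For the reverse inequality, I need to swap the inner supremum and the integral, which is the main obstacle and the exact reason upper semianalyticity enters the statement. Fix $\widetilde\nu\in\P(E^{n-1})$ and $\epsilon>0$. Because $(x,\mu)\mapsto\int f(x,\cdot)\,d\mu-\alpha(\mu)$ is upper semianalytic on $E^{n-1}\times\P(E)$, the Jankov--von Neumann theorem (Bertsekas--Shreve, Proposition~7.50) produces a universally measurable $\kappa_\epsilon:E^{n-1}\to\P(E)$ satisfying
\[
\int_E f(x,y)\,\kappa_\epsilon(x)(dy)-\alpha(\kappa_\epsilon(x))\ge g(x)-\epsilon\quad\text{for every }x\in E^{n-1}.
\]
Modifying $\kappa_\epsilon$ on a $\widetilde\nu$-null set makes it Borel measurable, so $\widetilde\nu\otimes\kappa_\epsilon$ defines a bona fide element of $\P(E^n)$. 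Integrating the selector inequality against $\widetilde\nu$ and subtracting $\alpha_{n-1}(\widetilde\nu)$ gives $\rho_n(f)\ge\int g\,d\widetilde\nu-\alpha_{n-1}(\widetilde\nu)-\epsilon$. Letting $\epsilon\downarrow 0$ and then taking the supremum over $\widetilde\nu\in\P(E^{n-1})$ yields $\rho_n(f)\ge\rho_{n-1}(g)$, completing the proof. The technical heart lies entirely in the selection argument: Borel measurability of $f$ would not suffice here, since the $\epsilon$-optimal kernel obtained by selection is only guaranteed universally measurable, and one must verify that $\rho$ (and iteratively $\rho_{n-1}$) is well-defined on such $g$, which is precisely the point of working in the upper semianalytic class.
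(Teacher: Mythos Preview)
Your proof is correct and follows essentially the same route as the paper: both establish upper semianalyticity of $g$ via Bertsekas--Shreve Propositions 7.48 and 7.47, then prove the identity $\rho_n(f)=\rho_{n-1}(g)$ by disintegrating $\nu\in\P(E^n)$ into its $E^{n-1}$-marginal and last-step kernel, invoking the recursive formula $\alpha_n(\widetilde\nu\otimes\kappa)=\alpha_{n-1}(\widetilde\nu)+\int\alpha(\kappa)\,d\widetilde\nu$, and using Proposition 7.50 to interchange the inner supremum with the $\widetilde\nu$-integral (modifying the selected kernel on a null set to make it Borel, exactly as the paper notes in a footnote). The only cosmetic difference is that you split the identity into two inequalities, whereas the paper obtains both directions in one stroke from the selection theorem.
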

\begin{proof}
To show that $g$ is upper semianalytic, note that
\begin{align*}
g(x_1,\ldots,x_{n-1}) &= \rho(f(x_1,\ldots,x_{n-1},\cdot)) \\
	&= \sup_{\nu \in \P(E)}\left(\int_Ef(x_1,\ldots,x_{n-1},\cdot)\,d\nu - \alpha(\nu)\right).
\end{align*}
Clearly $\alpha$ is Borel measurable, as its sub-level sets are compact.
It follows from \cite[Proposition 7.48]{bertsekasshreve} that the term in parentheses is upper semianalytic as a function of $(x_1,\ldots,x_{n-1},\nu)$. Hence, $g$ is itself upper semianalytic, by \cite[Proposition 7.47]{bertsekasshreve}.

We now turn toward the proof of the recursive formula for $\rho_n$.
Note first that the definition of $\alpha_n$ can be written recursively by setting $\alpha_1 = \alpha$ and, for $\nu \in \P(E^n)$ and a kernel $K$ from $E^n$ to $E$ (i.e., a Borel measurable map $x \mapsto K_x$ from $E^n$ to $\P(E)$), setting
\begin{align}
\alpha_{n+1}\left(\nu(dx)K_x(dx_{n+1})\right) = \int_{E^n}\alpha(K_x)\nu(dx) + \alpha_n(\nu). \label{pf:rhon-iterative1}
\end{align}
Fix $f \in B(E^{n+1})$, and note that $g(x_1,\ldots,x_n) := \rho(f(x_1,\ldots,x_n,\cdot))$ is upper semianalytic by the above argument. By definition,
\begin{align}
\rho_n(g) &= \sup_{\nu \in \P(E^n)}\left\{\int_{E^n}g\,d\nu - \alpha_n(\nu)\right\}. \label{pf:rhon-iterative2}
\end{align}
By a well known measurable selection argument \cite[Proposition 7.50]{bertsekasshreve}, for each $\nu \in \P(E^n)$ it holds that
\begin{align*}
\int_{E^n}g\,d\nu &= \int_{E^n}\sup_{\eta \in \P(E)}\left(\int_Ef(x_1,\ldots,x_n,x_{n+1})\eta(dx_{n+1}) - \alpha(\eta) \right)\nu(dx) \\
	&= \sup_{K}\left(\int_{E^n}\int_Ef(x_1,\ldots,x_{n+1})K_{x}(dx_{n+1})\nu(dx) - \int_{E^n}\alpha(K_x) \nu(dx)\right),
\end{align*}
where we have abbreviated $x=(x_1,\ldots,x_n)$, and where the supremum is over all kernels from $E^n$ to $E$, i.e., all Borel measurable maps from $E^n$ to $\P(E)$.\footnote{A priori, the supremum should be taken over maps $K$ from $E^n$ to $\P(E)$ which are measurable with respect to the smallest $\sigma$-field containing the analytic sets. But any such map is universally measurable and thus agrees $\nu$-a.e. with a Borel measurable map.} Every probability measure on $E^{n+1}$ can be written as $\nu(dx)K_x(dx_{n+1})$ for some $\nu \in \P(E^n)$ and some kernel $K$ from $E^n$ to $E$. Thus, in light of \eqref{pf:rhon-iterative1} and \eqref{pf:rhon-iterative2},
\begin{align*}
\rho_n(g) &= \sup_{\nu \in \P(E^n)}\sup_{K}\left[\int_{E^n}\int_Ef(x_1,\ldots,x_{n+1})K_{x}(dx_{n+1})\nu(dx) - \int_{E^n}\alpha(K_x) \nu(dx) - \alpha_n(\nu)\right] \\
	&= \sup_{\nu \in \P(E^{n+1})}\left(\int_{E^n}f\,d\nu - \alpha_{n+1}(\nu)\right) \\
	&= \rho_n(f).
\end{align*}
\end{proof}

In general, the function $g$ in Proposition \ref{pr:rhon-iterative} can fail to be Borel measurable. For instance, if $E$ is compact and $\alpha \equiv 0$, then our standing assumptions hold. In this case $\rho(f) = \sup_{x \in E}f(x)$ for $f \in B(E)$. For $f \in B(E^2)$ we have $\rho(f(x,\cdot)) = \sup_{y \in E}f(x,y)$. If $f(x,y)=1_A(x,y)$ for a Borel set $A \subset E^2$ whose projections are not Borel, then $\rho(f(x,\cdot))$ is not Borel. Credit is due to Daniel Bartl for pointing out this simple counterexample to an inaccurate claim in an earlier version of the paper; his paper \cite{bartl2016pointwise} shows why semianalytic functions are essential in this context.

\section{Two technical lemmas} \label{se:technical-lemma}

Here we state and prove a technical lemma that was used in the proof of Proposition \ref{pr:rhon-optimaltransport} as well as a simple extension of Jensen's inequality to convex functions of random measures.
The first lemma essentially says that if $f=f(x,y)$ is a function of two variables such that the map $x \mapsto f(x,\cdot)$ is measurable, from $E$ into an appropriate function space, then $f$ is essentially jointly measurable:

\begin{lemma} \label{le:jointmeasurability}
Let $(\Omega,\F,P)$ be a standard Borel probability space, let $E$ be a Polish space, and let $\mu \in \P(E)$. Let $L^0$ denote the set of equivalence classes of $\mu$-a.e. equal measurable functions from $E$ to $E$, and endow $L^0$ with the topology of convergence in measure. If $H : \Omega \rightarrow L^0$ is measurable, then there exists a jointly measurable function $h : \Omega \times E \rightarrow E$ such that, for $P$-a.e. $\omega$, we have $H(\omega)(x) = h(\omega,x)$ for $\mu$-a.e. $x \in E$.
\end{lemma}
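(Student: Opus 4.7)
The heart of the difficulty is that $H(\omega)$ is only an equivalence class of functions, so ``joint measurability'' requires choosing representatives in $x$ compatibly in $\omega$. My plan is to first replace $H$ by a sequence of countably-valued approximations, each of which admits an obvious jointly measurable realization, then pass to a limit using Fubini's theorem.

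Concretely, I would first equip $E$ with a bounded complete metric $d_E \le 1$ compatible with its Polish topology, and metrize $L^0$ by the complete separable metric $d(f,g) := \int_E d_E(f(x),g(x))\,\mu(dx)$. Since $L^0$ is separable metric, for each $n \ge 1$ I can find a countable Borel partition $\{A_i^n\}_{i \ge 1}$ of $L^0$ into pieces of $d$-diameter less than $2^{-n}$, and select a representative $f_i^n \in A_i^n$ for each $i$. Then
\[
h_n(\omega,x) := \sum_{i \ge 1} f_i^n(x)\,\mathbf{1}_{A_i^n}(H(\omega))
\]
is jointly measurable by inspection ($\omega \mapsto \mathbf{1}_{A_i^n}(H(\omega))$ is measurable because $H$ is, and $f_i^n$ is measurable in $x$), and satisfies $d(h_n(\omega,\cdot), H(\omega)) < 2^{-n}$ for every $\omega$.

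Next I would show $(h_n)$ is Cauchy in $L^1(\Omega \times E, P \otimes \mu)$ with respect to $d_E$. This is where Fubini enters:
\[
\int_{\Omega\times E} d_E(h_n,h_m)\,d(P\otimes\mu) = \int_\Omega d(h_n(\omega,\cdot),h_m(\omega,\cdot))\,P(d\omega) \le 2^{-n}+2^{-m},
\]
so a subsequence $h_{n_k}$ converges $(P \otimes \mu)$-a.e. to some jointly measurable $h: \Omega \times E \to E$ (set $h = x_0$ on the null set, for any fixed $x_0$); completeness of $(E,d_E)$ is what makes this limit exist.

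Finally I would verify $h(\omega,\cdot) = H(\omega)$ in $L^0$ for $P$-a.e. $\omega$. By Fubini applied to the exceptional set, for $P$-a.e. $\omega$ one has $h_{n_k}(\omega,x) \to h(\omega,x)$ for $\mu$-a.e. $x$, so dominated convergence gives $d(h_{n_k}(\omega,\cdot), h(\omega,\cdot)) \to 0$; combining with $d(h_{n_k}(\omega,\cdot), H(\omega)) < 2^{-n_k}$ forces $h(\omega,\cdot) = H(\omega)$ $\mu$-a.e. The main obstacle is bookkeeping rather than substance: one must carefully distinguish ``for each $\omega$'' (where only the $\mu$-equivalence class is controlled) from ``for $(P \otimes \mu)$-a.e. $(\omega,x)$'' (where pointwise values matter), and Fubini is precisely the bridge between these two notions.
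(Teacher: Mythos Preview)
Your proof is correct and takes a genuinely different route from the paper's. The paper reduces to the case $\Omega = E = [0,1]$ via Borel isomorphism, then defines the bounded linear functional $T(\varphi) = \int_\Omega \int_E H(\omega)(x)\,\varphi(\omega,x)\,\mu(dx)\,P(d\omega)$ on $L^1(P\otimes\mu)$ and invokes $L^1$--$L^\infty$ duality to produce $h \in L^\infty(P\otimes\mu)$ representing $T$; the identification $h(\omega,\cdot) = H(\omega)$ $\mu$-a.e.\ then follows by testing against product functions. Your approach is more direct and constructive: you build jointly measurable countably-valued approximants by partitioning the separable space $L^0$, and pass to an a.e.\ limit via Fubini and completeness of $(E,d_E)$. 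The paper's argument is slicker but relies on the linear structure available only after reducing the target to $[0,1]$; yours works in the original Polish space without that reduction and makes the approximation mechanism transparent, at the cost of slightly more bookkeeping. One cosmetic remark: your displayed ``sum'' defining $h_n$ should be read as a case split (exactly one indicator is nonzero), since $E$ need not carry an addition.
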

\begin{proof}
By Borel isomorphism, we may assume without loss of generality that $\Omega = E = [0,1]$. In particular, $H(\omega)(x) \in [0,1]$ for all $\omega,x \in [0,1]$. Let $L^1$ denote the set of $P\times\mu$-integrable (equivalence classes of a.s. equal) measurable functions from $[0,1]^2$ to $\R$. Define a linear functional $T : L^1 \rightarrow \R$ by
\[
T(\varphi) = \int P(d\omega)\int\mu(dx)H(\omega)(x)\varphi(\omega,x).
\]
This is well-defined because the function 
\[
\omega \mapsto \int\mu(dx)H(\omega)(x)\varphi(\omega,x)
\]
is measurable; indeed, this is easily checked for $\varphi$ of the form $\varphi(\omega,x)=f(\omega)g(x)$, for $f$ and $g$ bounded and measurable, and the general case follows from a monotone class argument. Because $|H(\omega)(x)|\le 1$, it is readily checked that $T$ is continuous. Thus $T$ belongs to the continuous dual of $L^1$, and there exists a bounded measurable function $h : [0,1]^2 \rightarrow \R$ such that
\[
T(\varphi) = \int P(d\omega)\int\mu(dx)h(\omega,x)\varphi(\omega,x),
\]
for all $\varphi \in L^1$. It is straightforward to check that this $h$ has the desired property.
\end{proof}

Our final lemma, an infinite-dimensional form of Jensen's inequality, is surely known, but we were unable to locate a precise reference, and the proof is quite short.

\begin{lemma} \label{le:jensen}
Fix $P \in \P(\P(E))$, and define the mean measure $\overline{P} \in \P(E)$ by
\[
\overline{P}(A) = \int_{\P(E)}m(A)\,P(dm).
\]
Then, for any function $G : \P(E) \rightarrow (-\infty,\infty]$ which is convex, bounded from below, and weakly lower semicontinuous, we have
\[
G(\overline{P}) \le \int_{\P(E)}G \, dP.
\]
\end{lemma}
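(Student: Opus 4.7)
The plan is to represent $G$ as the pointwise supremum of its continuous affine minorants, and then verify the inequality for each such minorant via Fubini. The trivial case is immediate: if $\ell(\nu) = \int_E f\,d\nu + c$ for some $f \in C_b(E)$ and $c \in \R$, then Fubini gives
\[
\int_{\P(E)} \ell\,dP = \int_E f\,d\overline{P} + c = \ell(\overline P),
\]
so whenever $\ell \le G$ pointwise on $\P(E)$ we have $\ell(\overline P) \le \int G\,dP$.

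The main work is the representation step. I would view $\P(E)$ as a convex subset of the space $\M(E)$ of finite signed Borel measures, endowed with the locally convex Hausdorff topology generated by the linear functionals $\mu \mapsto \int_E f\,d\mu$ for $f \in C_b(E)$; the topological dual of $\M(E)$ under this topology is precisely $C_b(E)$. Extend $G$ to $\M(E)$ by setting $G = +\infty$ off $\P(E)$. Since $\P(E)$ is closed in $\M(E)$ (being cut out by the closed conditions $\mu(E)=1$ and $\int f\,d\mu \ge 0$ for each $f \in C_b(E)$ with $f \ge 0$), this extension is convex and lower semicontinuous on $\M(E)$, and it is proper provided $G \not\equiv \infty$ (the case $G \equiv \infty$ being trivial, as is the case $\int G\,dP = \infty$). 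The Fenchel--Moreau theorem then yields
\[
G(\nu) = \sup_{f \in C_b(E)}\left(\int_E f\,d\nu - G^*(f)\right), \qquad G^*(f) := \sup_{\mu \in \P(E)}\left(\int_E f\,d\mu - G(\mu)\right),
\]
for every $\nu \in \P(E)$. Each $f \in C_b(E)$ thus produces a continuous affine minorant $\ell_f(\nu) := \int f\,d\nu - G^*(f)$ of $G$.

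Combining the two steps at $\nu = \overline P$: for every $f \in C_b(E)$, the first step gives $\ell_f(\overline P) \le \int G\,dP$; taking the supremum over $f$ via the representation yields $G(\overline P) \le \int G\,dP$. The main (mild) obstacle is ensuring the Fenchel--Moreau hypotheses hold in the chosen topology on $\M(E)$ -- specifically, that the extension by $+\infty$ off $\P(E)$ preserves lower semicontinuity, which reduces to the closedness of $\P(E)$ recorded above. An alternative route would be to approximate $P$ by finitely supported measures $P_n = \sum_i \lambda_{n,i}\delta_{\nu_{n,i}}$ and invoke ordinary finite convexity, but controlling $\int G\,dP_n \to \int G\,dP$ for unbounded lower-semicontinuous $G$ introduces extra work, so the Fenchel--Moreau approach seems cleaner.
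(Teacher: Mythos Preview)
Your proof is correct. Your route is genuinely different from the paper's: you extend $G$ to the space of signed measures, apply Fenchel--Moreau to represent $G$ as a supremum of affine minorants $\nu \mapsto \int f\,d\nu - G^*(f)$ with $f \in C_b(E)$, and then use Fubini on each minorant. The paper instead gives a probabilistic argument: it draws i.i.d.\ $\P(E)$-valued samples $\mu_1,\mu_2,\ldots$ from $P$, observes that the empirical average $S_n = \tfrac{1}{n}\sum_{i\le n}\mu_i$ converges weakly a.s.\ to $\overline{P}$ by the law of large numbers, and then chains lower semicontinuity, Fatou, and finite convexity:
\[
G(\overline{P}) \le \liminf_n \E[G(S_n)] \le \liminf_n \frac{1}{n}\sum_{i\le n}\E[G(\mu_i)] = \int G\,dP.
\]
Your approach is the standard convex-analytic one and is arguably cleaner once the dual pair $(\M(E),C_b(E))$ is set up; the paper's approach avoids Fenchel--Moreau in infinite dimensions entirely, using only ordinary finite convexity plus the LLN, which fits the empirical-measure theme of the paper. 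Both handle the unbounded-above case the same way (the boundedness-from-below hypothesis makes $\int G\,dP$ well defined, and neither argument needs more).
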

\begin{proof}
Define (on some probability space) i.i.d.\ $\P(E)$-valued random variables $(\mu_i)_{i=1}^\infty$ with common law $P$. Define the partial averages
\[
S_n = \frac{1}{n}\sum_{i=1}^n\mu_i.
\]
For any $f \in C_b(E)$,  the law of large numbers implies
\[
\lim_{n\rightarrow \infty}\int_Ef\,dS_n = \lim_{n\rightarrow \infty}\frac{1}{n}\sum_{i=1}^n\int_Ef\,d\mu_i = \E\int_Ef\,d\mu_1 = \int_Ef\,d\overline{P}, \ \ a.s.
\]
This easily shows that $S_n \rightarrow \overline{P}$ weakly a.s.
Use Fatou's lemma and the assumptions on $G$ to get
\begin{align*}
G(\overline{P}) &\le \liminf_{n\rightarrow \infty}\E[G(S_n)] \le \liminf_{n\rightarrow \infty}\E\left[\frac{1}{n}\sum_{i=1}^nG(\mu_i)\right] = \E[G(\mu_1)] = \int_{\P(E)}G \, dP.
\end{align*}
\end{proof}

\subsection*{Acknowledgements}
The author is indebted to Stephan Eckstein and Daniel Bartl as well as two anonymous referees for their careful feedback, which greatly improved the exposition and accuracy of the paper.

\bibliographystyle{amsplain}
\bibliography{sanov-bib}

\end{document}